\newcolumntype{P}[1]{>{\centering\arraybackslash}p{#1}}
\theoremstyle{plain}
\newtheorem{theorem}{Theorem}[section]
\newtheorem{corollary}[theorem]{Corollary}
\newtheorem{lemma}[theorem]{Lemma}
\newtheorem{proposition}[theorem]{Proposition}
\newtheorem{problem}[theorem]{Problem}
\theoremstyle{definition}
\newtheorem{example}[theorem]{Example}
\newtheorem{remark}[theorem]{Remark}
\newtheorem{definition}[theorem]{Definition}
\numberwithin{equation}{section}
\def\ldiv{\backslash}
\def\rdiv{/}
\def\sym#1{\mathrm{Sym}(#1)}
\def\dldiv{\backslash\!\!\backslash}
\def\drdiv{/\!\!/}
\def\F{\mathbb F}
\def\D{\mathbb D}
\def\DS{\mathrm{DS}}
\newcommand{\cref}[1]{Corollary~\ref{#1}}
\def\cc{\cellcolor{gray!25}} 
\def\settablename#1{
    \def\tblnm{#1}
}
\def\namedsudoku#1#2#3#4#5#6#7#8#9
\renewcommand{\arraystretch}{0.975}
\def\sudoku#1#2#3#4#5#6#7#8#9
\renewcommand{\arraystretch}{0.975}
\def\namedsmallsudoku#1#2#3#4#5#6#7#8#9
\renewcommand{\arraystretch}{0.82}
\def\smallsudoku#1#2#3#4#5#6#7#8#9
\renewcommand{\arraystretch}{0.82}
\def\oplabel#1{
    \def\opl{#1}
}
\def\rowlabels#1#2#3#4#5#6#7#8#9{
    \def\rla{#1}
    \def\rlb{#2}
    \def\rlc{#3}
    \def\rld{#4}
    \def\rle{#5}
    \def\rlf{#6}
    \def\rlg{#7}
    \def\rlh{#8}
    \def\rli{#9}
}
\def\collabels#1#2#3#4#5#6#7#8#9{
    \def\cla{#1}
    \def\clb{#2}
    \def\clc{#3}
    \def\cld{#4}
    \def\cle{#5}
    \def\clf{#6}
    \def\clg{#7}
    \def\clh{#8}
    \def\cli{#9}
}
\def\smalllabeledsudoku#1#2#3#4#5#6#7#8#9 
\renewcommand{\arraystretch}{0.82}
\begin{document}

\title[Division Sudokus]{Division Sudokus: Invariants, Enumeration and Multiple Partitions}

\author{Ale\v s Dr\'apal}
\address[Dr\'apal]{Department of Mathematics, Charles University, Sokolovsk\'a 83, 186 75, Praha 8, Czech Republic}
\email{drapal@karlin.mff.cuni.cz}

\author{Petr Vojt\v echovsk\' y}
\address[Vojt\v{e}chovsk\'y]{Department of Mathematics, University of Denver, 2390 S.~York St, Denver, Colorado, 80208, USA}
\email{petr@math.du.edu}

\begin{abstract}
A division sudoku is a latin square whose all six conjugates are sudoku squares. We enumerate division sudokus up to a suitable equivalence, introduce powerful invariants of division sudokus, and also study latin squares that are division sudokus with respect to multiple partitions at the same time. We use nearfields and affine geometry to construct division sudokus of prime power rank that are rich in sudoku partitions.
\end{abstract}

\keywords{Sudoku, quasigroup, division sudoku, sudoku partition, sudoku tri-partition, nearfield, affine geometry, enumeration.}

\subjclass{Primary: 05B15. Secondary: 20N05.}

\thanks{Petr Vojt\v{e}chovsk\'y partially supported by the 2019 PROF grant of the University of Denver.}

\maketitle

\section{Introduction}

Sudoku is the most popular puzzle of our time. Many nontrivial results about sudoku squares and sudoku puzzles have been obtained by mathematicians and sudoku enthusiasts \cite{BaileyEtAl, CameronEtAl, Keedwell, Lorch, McGuireEtAl, PedersenVis, SuDokusMaths}.

In this paper we study a special type of sudokus that we propose to call \emph{division sudokus}. Division sudokus can be thought of as sudokus for which the roles of rows, columns and symbols are fully interchangeable. We have chosen the name division sudoku because a latin square is a division sudoku if and only if, when viewed as a quasigroup, the operation tables for multiplication, left division and right division are all sudokus. (See Definition \ref{Df:DivisionSudoku} for a more precise statement and Proposition \ref{Pr:DSChar} for a list of equivalent conditions.)

It appears that division sudokus were for the first time considered by G\"unter Stertenbrink in 2005 \cite[p.\,27]{SuDokusMaths} under the name 3dokus, which has since come to mean something else, namely a generalization of sudokus to three dimensions \cite{3Doku, LambertWhitlock}. Stertenbrink correctly determined the number of standard division sudokus and asked several natural questions concerning enumeration and completability of division sudokus. Bailey, Cameron and Connelly \cite{BaileyEtAl} looked at a small class of sudokus (so called symmetric sudoku puzzles) from a design-theoretic point of view, and they discussed an example \cite[Figure 5]{BaileyEtAl} that happens to be a division sudoku.

We arrived at the notion of division sudokus independently by studying properties of the following, very interesting example of a sudoku square:

\medskip
\begin{center}
\settablename{$L_0$}
\namedsudoku{1&4&7&3&8&5&2&9&6}{8&2&5&6&1&9&4&3&7}{6&9&3&7&4&2&8&5&1}{5&3&9&4&7&1&6&2&8}{7&6&1&2&5&8&9&4&3}{2&8&4&9&3&6&1&7&5}{9&5&2&8&6&3&7&1&4}{3&7&6&1&9&4&5&8&2}{4&1&8&5&2&7&3&6&9}
\end{center}
\medskip

Not only is $L_0$ a division sudoku, but it happens to be the ``least associative'' latin square of order $9$. In more detail, an \emph{associative triple} in a quasigroup $(X,\cdot)$ is an ordered triple $(x,y,z)$ such that $x\cdot(y\cdot z) = (x\cdot y)\cdot z$. It is easy to show that in a quasigroup of order $n$ the number of associative triples is at least $n$. Until recently, it was an open problem to find a quasigroup of order $n>1$ containing precisely $n$ associative triples \cite{Kepka}. No such quasigroup exists for $n\le 8$ \cite{DVJCD}. With rows and columns labeled by $1$, $\dots$, $9$ in this order, the square $L_0$ is a multiplication table of a unique quasigroup of order $9$ up to isomorphism with precisely $9$ associative triples \cite{DL,DV}.

\bigskip

A \emph{sudoku of rank} $m$ is a latin square of order $m^2$ consisting of $m^2$ subsquares, each of size $m\times m$, such that each subsquare contains all $m^2$ symbols. The usual sudoku is therefore a sudoku of rank $3$. Not much is known about sudokus of rank $m>3$. In contrast, division sudokus are sufficiently restrictive so that many of their aspects can be investigated for larger ranks. Most of this paper is devoted to division sudokus of rank $3$ but it may well turn out that questions about division sudokus of larger rank will lead to more interesting mathematics. We start this line of investigation in Section \ref{Sc:Nearfields}.

The content of this paper is as follows. In Section \ref{Sc:Char} we characterize division sudokus in various ways. In Section \ref{Sc:Invariants} we introduce two powerful invariants of division sudokus of rank $3$, namely the intercalate structure invariant and the minisquare structure invariant. We use these invariants and several computational tools in Section \ref{Sc:Enumeration} to enumerate division sudokus of rank $3$ up to suitable equivalences. There are $104015259648$ standard division sudokus of rank $3$ (confirming Stertenbrink's result) forming 186 ds-isotopism classes (see Definition \ref{Df:DsIsotopic}), $45$ main ds-classes, and belonging to $183$ isotopism classes of quasigroups.

In Section \ref{Sc:MultiplePartitions} we for the first time consider separate sudoku partitions for rows, columns and symbols, so-called sudoku tri-partitions. (This paper could have started by defining division sudokus via tri-partitions. We opted for quasigroups since they provide a more formal setting for concepts like isotopisms.) We find a division sudoku of rank $3$ with $24$ sudoku tri-partitions. We then show how to synchronize as many sudoku tri-partitions as possible by group actions to obtain division sudokus with the largest number of sudoku partitions. There are only $3$ essentially different division sudokus of rank $3$ with more than one sudoku partition, namely with $3$, $4$ and $4$ sudoku partitions, respectively.

Building upon ideas of Stein \cite{Stein}, in Section \ref{Sc:Nearfields} we show how to construct division sudokus of prime power rank $m$ that have at least $m+1$ sudoku partitions. The construction is based on nearfields, affine planes and subspaces. The underlying set is the finite field of order $m^2$ or, when $m$ is odd, the quadratic nearfield of order $m^2$. If the rank $m$ is a prime power of the form $p^{2s}$, we can improve the lower bound on the number of sudoku partitions to $m^2+m$. For instance, there exists a division sudoku of rank $4$ with $20$ sudoku partitions.

Precisely one ds-isotopism class of division sudokus of rank $3$ consists of sudokus isotopic to a group (namely to $C_3\times C_3$). This class contains the example in \cite[Figure 5]{BaileyEtAl}.

\subsection{Notation and terminology}

We start by recalling the standard terminology for latin squares and quasigroups. We refer the reader to \cite{Bruck, Handbook} for more details.

Let $X$ be a finite set. A \emph{latin square} on $X$ is an array $L=(L(x,y))_{x,y}$ with entries $L(x,y)\in X$ such that every row of $L$ contains all symbols of $X$ and every column of $L$ contains all symbols of $X$. The algebraic counterpart of a latin square is a quasigroup. A \emph{quasigroup} on $X$ is a pair $(X,\cdot)$, where $\cdot$ is a binary operation on $X$ such that for every $a$, $b\in X$ there are unique $x$, $y\in X$ satisfying $a\cdot x=b$ and $y\cdot a=b$. Once a labeling of rows and columns is fixed, there is a one-to-one correspondence between latin squares on $X$ and (multiplication tables of) quasigroups on $X$. Throughout the paper, when $L$ is a latin square on $X=\{1,2,\dots,n\}$, the associated quasigroup is obtained from $L$ by labeling the rows and columns by $1$, $\dots$, $n$, in this order.

A quasigroup can be defined equationally as a set $X$ with three binary operations $\cdot$, $\rdiv$, $\ldiv$ such that the identities $(x\cdot y)\rdiv y=x$, $(x\rdiv y)\cdot y=x$, $x\cdot(x\ldiv y)=y$ and $x\ldiv (x\cdot y)=y$ are satisfied. We refer to $\rdiv$ and $\ldiv$ as \emph{right division} and \emph{left division}, respectively. If $(Q,\cdot)$ is a quasigroup, so are $(Q,\rdiv)$ and $(Q,\ldiv)$.

The \emph{orthogonal array} of a latin square $L$ on $X$ is the set
\begin{displaymath}
    O(L)=\{(x,y,L(x,y)):x,\,y\in X\}\subseteq X\times X\times X.
\end{displaymath}
The latin square $L$ can be reconstructed from $O(L)$ by setting $L(x,y)$ to be the unique $z$ such that $(x,y,z)\in O(L)$.

\medskip

The symmetric group on a set $X$ will be denoted by $\sym{X}$ and we also write $\sym{\{1,\dots,n\}} = \sym{n}$. Given $\theta\in\sym{3}$ and a latin square $L$, we denote by $L^\theta$ the latin square whose orthogonal array is equal to
\begin{displaymath}
    \{(x_{\theta(1)},x_{\theta(2)},x_{\theta(3)}):(x_1,x_2,x_3)\in O(L)\}.
\end{displaymath}
For instance, $L^{(1,2)}$ is the matrix transpose of $L$. The six not necessarily distinct latin squares $L^\theta$ for $\theta\in\sym{3}$ are called the \emph{conjugates} of $L$. Algebraically speaking, the conjugates of $L=(X,\cdot)$ are precisely the quasigroups
\begin{displaymath}
    L=(X,\cdot),\quad L^{(1,3)}=(X,\rdiv),\quad L^{(2,3)}=(X,\ldiv)
\end{displaymath}
and their opposites. Indeed, we have $x_1\rdiv x_2=x_3$ iff $x_3x_2=x_1$ iff $(x_3,x_2,x_1)\in O(L)$, so $L^{(1,3)} = (X,\rdiv)$, and similarly for the left division.

An \emph{isotopism} from a latin square $L_1$ to a latin square $L_2$ on $X$ is an ordered triple $(\alpha,\beta,\gamma)$ of permutations of $X$ such that
\begin{displaymath}
    L_2(\alpha(x),\beta(y)) = \gamma( L_1(x,y) )
\end{displaymath}
holds for every $x$, $y\in X$, in which case the latin squares $L_1$, $L_2$ are said to be \emph{isotopic}. Hence two latin squares are isotopic if and only if one is obtained from the other by a permutation of rows, columns and symbols. Since isotopisms preserve combinatorial properties of latin squares, the classification of latin squares is often pursued up to isotopism. From the point of view of orthogonal arrays, it is also natural to permute the roles of rows, columns and symbols. Two latin squares are said to be \emph{paratopic} if one is isotopic to a conjugate of the other. The equivalence classes of latin squares induced by paratopy are called \emph{main classes}.

\medskip

Let us now introduce notation and terminology for sudokus. We will rely in part on the wonderfully descriptive concepts used by sudoku enthusiasts.

Let $X$ be a set of size $m^2$. A partition $\{X_1,\dots,X_m\}$ of $X$ is a \emph{sudoku partition} if $|X_i|=m$ for every $1\le i\le m$. The subsets $X_i$ are called \emph{blocks}. A sudoku partition is \emph{standard} if
\begin{displaymath}
    X_i = \{(i-1)m+1,(i-1)m+2,\dots,im\}
\end{displaymath}
for every $1\le i\le m$. A permutation $\theta\in\sym{X}$ is said to \emph{preserve the partition} $\{X_1,\dots,X_m\}$ if the image $\theta(X_i)$ of every block $X_i$ is a block.

When $m=3$, we take advantage of the fact that $m^2<10$ and use compact notation for sudoku partitions. For instance, $\{123\ 456\ 789\}$ will denote the sudoku partition $\{\{1,2,3\},\{4,5,6\},\{7,8,9\}\}$.

Let $L$ be a latin square on $X$ and let $O(L)$ be its orthogonal array. The \emph{band} of $L$ corresponding to the block $X_i$ is the set
\begin{displaymath}
    B_i = \{(x,y,z)\in O(L):x\in X_i\},
\end{displaymath}
the \emph{stack} of $L$ corresponding to $X_i$ is the set
\begin{displaymath}
    S_i = \{(x,y,z)\in O(L):y\in X_i\},
\end{displaymath}
and the \emph{pile} of $L$ corresponding to $X_i$ is the set
\begin{displaymath}
    P_i=\{(x,y,z)\in O(L):z\in X_i\}.
\end{displaymath}
An intersection of a band and a stack is a \emph{minisquare}. The rows and columns in a minisquare are \emph{minirows} and \emph{minicolumns}, respectively. In the following figure we have highlighted the band $B_2$, stack $S_2$ and pile $P_2$ in the standard division sudoku $L_0$.

\medskip
\begin{center}
\begin{tabular}{P{30mm}P{30mm}P{30mm}}
\smallsudoku{1&4&7&3&8&5&2&9&6}{8&2&5&6&1&9&4&3&7}{6&9&3&7&4&2&8&5&1}{\cc5&\cc3&\cc9&\cc4&\cc7&\cc1&\cc6&\cc2&\cc8}{\cc7&\cc6&\cc1&\cc2&\cc5&\cc8&\cc9&\cc4&\cc3}{\cc2&\cc8&\cc4&\cc9&\cc3&\cc6&\cc1&\cc7&\cc5}{9&5&2&8&6&3&7&1&4}{3&7&6&1&9&4&5&8&2}{4&1&8&5&2&7&3&6&9}
&\smallsudoku{1&4&7&\cc3&\cc8&\cc5&2&9&6}{8&2&5&\cc6&\cc1&\cc9&4&3&7}{6&9&3&\cc7&\cc4&\cc2&8&5&1}{5&3&9&\cc4&\cc7&\cc1&6&2&8}{7&6&1&\cc2&\cc5&\cc8&9&4&3}{2&8&4&\cc9&\cc3&\cc6&1&7&5}{9&5&2&\cc8&\cc6&\cc3&7&1&4}{3&7&6&\cc1&\cc9&\cc4&5&8&2}{4&1&8&\cc5&\cc2&\cc7&3&6&9}
&\smallsudoku{1&\cc4&7&3&8&\cc5&2&9&\cc6}{8&2&\cc5&\cc6&1&9&\cc4&3&7}{\cc6&9&3&7&\cc4&2&8&\cc5&1}{\cc5&3&9&\cc4&7&1&\cc6&2&8}{7&\cc6&1&2&\cc5&8&9&\cc4&3}{2&8&\cc4&9&3&\cc6&1&7&\cc5}{9&\cc5&2&8&\cc6&3&7&1&\cc4}{3&7&\cc6&1&9&\cc4&\cc5&8&2}{\cc4&1&8&\cc5&2&7&3&\cc6&9}
\end{tabular}
\end{center}
\medskip

We will use informal terminology for subsets of orthogonal arrays. For instance, we say that $S\subseteq O(L)$ contains all symbols of $X$ if for every $z\in X$ there exist $x$, $y\in X$ such that $(x,y,z)\in S$.

We use the icon \Keyboard\ to indicate results obtained by computer computation. The calculations were carried out with \texttt{Mace4} \cite{Mace4} and \texttt{GAP} \cite{GAP}, some using the package \texttt{LOOPS} \cite{LOOPS}.

\subsection{Sudokus}

\begin{definition}
Let $\{X_1,\dots,X_m\}$ be a sudoku partition of $X$. A \emph{sudoku (square) of rank $m$} is a latin square $L$ on $X$ such that every minisquare of $L$ contains all symbols of $X$. A sudoku is \emph{standard} if the partition is standard.
\end{definition}

The following result goes back to $2003$ and it was confirmed independently several times.

\begin{theorem}[Stertenbrink \Keyboard]
There are $6670903752021072936960$ standard sudokus of rank $3$.
\end{theorem}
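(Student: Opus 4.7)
The plan is to reduce the enumeration by exploiting obvious symmetries and then to count exhaustively by computer, following the strategy of Felgenhauer and Jarvis (which is what Stertenbrink's independent computation essentially does). First I would normalize: by relabeling symbols I can assume the first row is $1,2,\dots,9$, multiplying the final count by $9!$. This leaves the problem of counting standard sudokus of rank $3$ whose first row is the identity permutation.

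Next I would enumerate the possible \emph{top bands}, i.e.\ choices of the first three rows. Given the normalized first row, the second and third rows are constrained so that rows $1,2,3$ form three minisquares each containing $\{1,\dots,9\}$ and no column repeats. A direct case analysis of how $\{4,5,6,7,8,9\}$ can be distributed between the top middle and top right minisquares in rows $2$ and $3$ yields a manageable number of top bands. Using the action of $\sym{3}$ (permuting the three stacks), $\sym{3}\times\sym{3}\times\sym{3}$ (permuting columns inside each stack), and $\sym{2}$ (swapping rows $2$ and $3$), together with the relabelling symmetry that restores the first row to the identity, one can further collapse these top bands into a small number of equivalence classes; Felgenhauer and Jarvis obtained $44$ classes (with orbit sizes known).

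For each equivalence class representative I would then run a backtracking search that extends the fixed top band to a full sudoku of rank $3$, counting completions. Since the top three rows already determine the column-contents of each stack up to permutation within the lower two bands, the search can be pruned aggressively by propagating row, column, and minisquare constraints as in standard constraint solvers. Summing (completion count)$\times$(orbit size) over the representatives gives the number of completions for a fixed identity first row; multiplying by $9!$ recovers the stated total $6670903752021072936960$.

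The main obstacle is purely computational: the raw completion count for some top-band classes runs into the tens of billions, and a naive backtrack is infeasible. The crucial ingredients are therefore (i) a correct and efficient equivalence reduction of top bands, so that only a small number of representatives must be searched, and (ii) good pruning in the backtrack (unit propagation on the latin-square constraints plus the minisquare constraints). Once these are in place, the computation is routine and, as Stertenbrink and subsequent authors have verified, produces the value in the statement.
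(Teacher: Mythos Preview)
Your outline is a faithful sketch of the Felgenhauer--Jarvis style computation, and there is nothing mathematically wrong with it. But note that the paper does not actually prove this theorem at all: it is stated as a known computational result, attributed to Stertenbrink and marked with the \Keyboard\ icon, with the preceding sentence ``The following result goes back to 2003 and it was confirmed independently several times.'' No proof environment follows. So there is nothing in the paper to compare your proposal against; the authors simply quote the number as background before moving on to their own objects of study (division sudokus). Your write-up would serve as a reasonable expository footnote on how the cited computation is carried out, but it is not something the paper itself undertakes.
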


Just as the notions of isotopy and paratopy are the correct concepts for classifying latin squares and orthogonal arrays while preserving their combinatorial properties, so are the following two notions the correct concepts for classifying sudokus:

\begin{definition}
Let $\{X_1,\dots,X_m\}$ be a sudoku partition of $X$. Two sudokus $L_1$, $L_2$ defined on $X$ are \emph{s-isotopic} (short for \emph{sudoku-isotopic}) if there exists an isotopism $(\alpha,\beta,\gamma)$ from $L_1$ to $L_2$ such that $\alpha$, $\beta$ are partition preserving. Two sudokus $L_1$, $L_2$ are \emph{s-paratopic} (short for \emph{sudoku-paratopic}) if $L_1$ is s-isotopic to $L_2$ or to $L_2^{(1,2)}$.
\end{definition}

It is natural in the context of sudokus to demand that the permutations $\alpha$, $\beta$ be partition preserving, since the definition of sudoku depends on a fixed partition of rows and columns. The symbols of a sudoku are not partitioned in any way, however, hence the permutation $\gamma$ is not restricted. Similarly, it is natural to consider only the conjugates $L$ and $L^{(1,2)}$ of a sudoku $L$, not all six conjugates as in the case of latin square paratopisms, because the roles of rows and columns are symmetric in the definition of sudoku while the role of symbols is different.

\begin{theorem}[Russell and Jarvis {{\cite{RussellJarvis}}} \Keyboard]
There are $10945437157$ standard sudokus of rank $3$ up to s-isotopism and $5472730538$ standard sudokus of rank $3$ up to s-paratopism.
\end{theorem}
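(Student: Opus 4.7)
The plan is to enumerate orbits of the s-isotopism group on the set $\mathcal{S}$ of standard sudokus of rank $3$ via the Cauchy--Frobenius (Burnside) lemma, and likewise for the s-paratopism group. The first step is to describe the groups. The permutations of $\{1,\ldots,9\}$ that preserve the standard sudoku partition $\{\{1,2,3\},\{4,5,6\},\{7,8,9\}\}$ form the wreath product $P \cong S_3 \wr S_3$ of order $1296$, so the s-isotopism group is $G = P \times P \times S_9$ of order $1296^2 \cdot 9!$, acting on $\mathcal{S}$ by $((\alpha,\beta,\gamma)\cdot L)(x,y) = \gamma(L(\alpha^{-1}(x),\beta^{-1}(y)))$. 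The s-paratopism group $\widetilde{G}$ is generated by $G$ together with the transpose involution $L \mapsto L^{(1,2)}$ (which normalizes $G$), so $|\widetilde{G}| = 2|G|$.

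Second, I would apply Burnside,
\begin{displaymath}
\#(\mathcal{S}/G) = \frac{1}{|G|}\sum_{g \in G}|\mathrm{Fix}_\mathcal{S}(g)|,
\end{displaymath}
organised by conjugacy classes of $G$ so that only one representative per class must be analysed. For $g = (\alpha,\beta,\gamma)$, a fixed sudoku $L$ must satisfy $L(\alpha(x),\beta(y)) = \gamma(L(x,y))$ for all $x,y$; equivalently, the value of $L$ is constant along each orbit of $\langle g\rangle$ on the $9\times 9$ grid, subject to compatibility with the cycle structure of $\gamma$. Most conjugacy classes can be eliminated a priori from inspection of the cycle types of $\alpha$, $\beta$, $\gamma$, since the forced identifications already violate the latin condition within some row or column; this drastic pruning is what makes the sum feasible.

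The hard part will be the residual classes that genuinely contribute, because for each of them one needs a sub-enumeration of latin completions respecting the imposed symmetries. The strategy I would follow (and the one used in \cite{RussellJarvis}) is to stratify every sudoku by its first \emph{band} (rows $1,2,3$): the number of sudoku completions of a band depends only on its $P\times P$-class, and the band classes together with their completion counts are already tabulated as a by-product of Stertenbrink's computation of $|\mathcal{S}|$. One then descends the constraint imposed by $g$ to the band level, counts fixed bands weighted by their completion counts, and sums to obtain $\sum_g |\mathrm{Fix}(g)|$; dividing by $|G|$ and by $|\widetilde{G}|$ yields the stated $10945437157$ and $5472730538$. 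As an internal check, writing $s$ for the number of s-isotopism classes invariant under the transpose involution, one has $\#(\mathcal{S}/\widetilde{G}) = (\#(\mathcal{S}/G)+s)/2$; the reported values force $s = 23919$, which one can verify independently by testing transpose-invariance on each class representative.
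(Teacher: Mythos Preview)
The paper does not supply its own proof of this theorem: it is quoted verbatim as a result of Russell and Jarvis, with the \Keyboard\ marker indicating a computer-verified statement, and no proof environment follows it. So there is no ``paper's proof'' against which to compare your attempt.

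That said, your outline is an accurate description of the method Russell and Jarvis actually used: Burnside's lemma applied to $G=(S_3\wr S_3)^2\times S_9$ (respectively its index-$2$ extension by the transpose), reduction to conjugacy classes, elimination of most classes by cycle-type incompatibilities, and a residual computer enumeration of fixed sudokus for the surviving classes. Your sanity check $2\cdot 5472730538-10945437157=23919$ for the number of transpose-invariant s-isotopism classes is a nice touch.

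What you have written is a proof \emph{plan}, not a proof: the phrases ``the hard part will be'' and ``the strategy I would follow'' are honest admissions that the substantial computational work --- tabulating the band equivalence classes, their completion counts, and the fixed-point counts for each surviving conjugacy class --- has not been carried out here. That is entirely appropriate for a result of this nature, but you should be explicit that you are sketching the Russell--Jarvis methodology rather than providing an independent verification.
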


\section{Division sudokus and their characterization}\label{Sc:Char}

We are interested in those sudokus for which rows, columns and symbols play symmetric roles.

\begin{definition}\label{Df:DivisionSudoku}
Let $\{X_1,\dots,X_m\}$ be a sudoku partition of $X$. A \emph{division sudoku (square) of rank $m$} is a latin square $L$ on $X$ such that the intersection of every band with every stack contains all symbols of $X$, the intersection of every band with every pile contains all columns of $X$, and the intersection of every stack with every pile contains all rows of $X$.
\end{definition}

\begin{proposition}\label{Pr:DSChar}
Let $\{X_1,\dots,X_m\}$ be a sudoku partition of $X$. Then the following conditions are equivalent for a latin square $L$ on $X$:
\begin{enumerate}
\item[(i)] $L$ is a division sudoku,
\item[(ii)] the conjugates $L$, $L^{(1,3)}$ and $L^{(2,3)}$ are sudokus,
\item[(iii)] all conjugates of $L$ are sudokus,
\item[(iv)] $L$ is a sudoku in which every minirow and every minicolumn contains precisely one entry from every pile.
\end{enumerate}
\end{proposition}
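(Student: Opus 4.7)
The plan is to establish (i) $\iff$ (ii) $\iff$ (iii) and (i) $\iff$ (iv) by translating the sudoku condition through the orthogonal-array definition of conjugates. The key observation is that for $\theta\in\sym{3}$, the first, second and third coordinates of $O(L^\theta)$ are, as functions on $O(L)$, the projections onto $x_{\theta(1)}, x_{\theta(2)}, x_{\theta(3)}$. Consequently the bands, stacks and piles of $L^\theta$ correspond to whichever of bands, stacks and piles of $L$ are indexed by $\theta(1), \theta(2), \theta(3)$, and the sudoku property for $L^\theta$ (every band-stack intersection contains all symbols) becomes a statement about $L$ in which two of the three roles are restricted to blocks while the third must cover $X$. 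Applying this with $\theta=(1,3)$ reads \emph{every stack-pile intersection of $L$ contains all rows}, and with $\theta=(2,3)$ reads \emph{every band-pile intersection of $L$ contains all columns}; together with $L$ itself being a sudoku, these are precisely the three defining conditions of Definition~\ref{Df:DivisionSudoku}, giving (i) $\iff$ (ii).

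For (ii) $\iff$ (iii), only (ii) $\Rightarrow$ (iii) requires argument, since (iii) $\Rightarrow$ (ii) is immediate. Because the row and column block partitions of any sudoku coincide, the transpose $M^{(1,2)}$ of a sudoku $M$ is again a sudoku: its minisquares are precisely the transposes of the minisquares of $M$. Using $(L^\sigma)^\tau = L^{\sigma\circ\tau}$ together with $(1,3)\circ(1,2)=(1,2,3)$ and $(2,3)\circ(1,2)=(1,3,2)$, the remaining three conjugates $L^{(1,2)}$, $L^{(1,2,3)}$, $L^{(1,3,2)}$ are sudokus as soon as $L$, $L^{(1,3)}$, $L^{(2,3)}$ are.

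The equivalence (i) $\iff$ (iv) follows from a short counting argument that is carried out independently for minirows and minicolumns. Because each row of the latin square $L$ meets any pile $P_k$ in exactly $m$ entries, one has $|B_i\cap P_k|=m^2$, and analogously $|S_j\cap P_k|=m^2$. Hence $B_i\cap P_k$ contains all $m^2$ columns iff each column meets $B_i\cap P_k$ in exactly one entry, iff every minicolumn (a column restricted to a band, of size $m$) contains precisely one entry from each pile; the symmetric statement converts ``stack-pile contains all rows'' into the minirow condition. Combined with the sudoku condition, this is (iv).

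The only real obstacle is careful bookkeeping: keeping straight how the roles of bands, stacks and piles permute under each $\theta\in\sym{3}$, and noticing that in (iv) the global statements ``$B_i\cap P_k$ contains all columns'' and ``$S_j\cap P_k$ contains all rows'' are equivalent to per-minicolumn and per-minirow completeness exactly because of the count $m^2$; everything else is direct translation.
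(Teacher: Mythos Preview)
Your proof is correct and follows essentially the same approach as the paper's: translating the three defining conditions into the sudoku property for $L$, $L^{(1,3)}$, $L^{(2,3)}$, closing under $(1,2)$-conjugation, and a counting argument for (iv). The only minor variation is that for $|B_i\cap P_k|=m^2$ you count by rows (each row meets a pile in $m$ cells, times $m$ rows in a band), whereas the paper counts by minisquares (each minisquare in $B_i$ contributes $m$ cells to $P_k$); your count is slightly cleaner since it uses only the latin property rather than the sudoku property, but both reach the same conclusion.
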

\begin{proof}
The definition of division sudoku says that $L$, $L^{(1,3)}$ and $L^{(2,3)}$ are sudokus. A latin square $L$ is a sudoku if and only if the conjugate $L^{(1,2)}$ is a sudoku. Hence (i), (ii) and (iii) are equivalent.

Let now $L$ be a sudoku of rank $m$ and fix a band $B_i$ and a pile $P_j$. Every minisquare contained in $B_i$ contains all symbols, hence precisely $m$ symbols from the pile $P_j$. Altogether, $|B_i\cap P_j|=m^2$. If $L$ is a division sudoku then $B_i\cap P_j$ intersects all $m^2$ columns of $X$ and it follows that every minicolumn of $B_i$ contains precisely one element of $P_j$. Conversely, if every minicolumn of $B_i$ contains precisely one element of $P_j$ then $B_i\cap P_j$ intersects all columns of $X$. The argument for minirows is analogous by considering a stack $S_i$ and a pile $P_j$.
\end{proof}

Condition (iv) of Proposition \ref{Pr:DSChar} is particularly suitable from a visual perspective since it does not require us to calculate and inspect conjugates.

We could develop a fully symmetric terminology in which minisquares, minirows and minicolumns would make sense not only for the intersections of bands and stacks but also for the intersections of bands and piles, and stacks and piles. Then condition (iv) could be equivalently restated with respect to the new kind of miniobjects.

The next proposition offers a quasi-equational characterization of division sudokus in the variety of quasigroups.

\begin{proposition}\label{Pr:DSQuasi}
Let $\{X_1,\dots,X_m\}$ be a sudoku partition of $X$. Write $x\sim y$ if $x$ and $y$ belong to the same block of $X$. Let $Q=(X,\cdot,\rdiv,\ldiv)$ be a quasigroup. Consider the conditions
\begin{align}
    &\text{for all }x,\,y,\,u,\,v\in X\text{ if } x\sim u,\,y\sim v\text{ and } x \cdot y=u \cdot v\text{ then } (x,y)=(u,v),\label{Eq:DSQuasi1}\\
    &\text{for all }x,\,y,\,u,\,v\in X\text{ if } x\sim u,\,y\sim v\text{ and } x\rdiv y = u\rdiv v\text{ then } (x,y)=(u,v),\label{Eq:DSQuasi2}\\
    &\text{for all }x,\,y,\,u,\,v\in X\text{ if } x\sim u,\,y\sim v\text{ and } x\ldiv y = u\ldiv v\text{ then } (x,y)=(u,v).\label{Eq:DSQuasi3}
\end{align}
Then $Q$ is a sudoku if and only if \eqref{Eq:DSQuasi1} holds. Furthermore, $Q$ is a division sudoku if and only if all \eqref{Eq:DSQuasi1}, \eqref{Eq:DSQuasi2} and \eqref{Eq:DSQuasi3} hold.
\end{proposition}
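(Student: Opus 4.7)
The plan is to prove the sudoku characterization \eqref{Eq:DSQuasi1} directly from a counting argument, and then derive the division sudoku characterization by applying the first part of the proposition to each of the three quasigroup operations via \pref{Pr:DSChar}.

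For the sudoku case, I would first observe that any minisquare $B_i\cap S_j$ consists of the cells $(x,y)$ with $x\in X_i$ and $y\in X_j$, so it has exactly $|X_i|\cdot|X_j|=m^2$ cells. Since $|X|=m^2$ as well, the minisquare contains all symbols of $X$ if and only if the $m^2$ products $x\cdot y$ appearing in it are pairwise distinct. A failure of this distinctness amounts to a pair of cells $(x,y)\neq (u,v)$ in the same minisquare, that is, with $x\sim u$ and $y\sim v$, such that $x\cdot y = u\cdot v$, which is exactly the negation of \eqref{Eq:DSQuasi1}. Ranging over all pairs of blocks $X_i,X_j$, we conclude that $Q$ is a sudoku if and only if \eqref{Eq:DSQuasi1} holds.

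For the division sudoku case, I would invoke \pref{Pr:DSChar}(ii), which asserts that $L$ is a division sudoku if and only if each of $L$, $L^{(1,3)}$, $L^{(2,3)}$ is a sudoku on $X$ with respect to $\{X_1,\dots,X_m\}$. The key observation is that $L^{(1,3)}$ is precisely the multiplication table of $(X,\rdiv)$: from $L(x,y)=z$ one has $(x,y,z)\in O(L)$, hence $(z,y,x)\in O(L^{(1,3)})$, so the $(z,y)$-entry of $L^{(1,3)}$ is $x = z\rdiv y$; similarly $L^{(2,3)}$ is the multiplication table of $(X,\ldiv)$. Applying the first part of the proposition in turn to the quasigroups $(X,\cdot)$, $(X,\rdiv)$, $(X,\ldiv)$ then yields exactly conditions \eqref{Eq:DSQuasi1}, \eqref{Eq:DSQuasi2}, \eqref{Eq:DSQuasi3}.

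The only real subtlety is bookkeeping: one must confirm that the partition used to define the sudoku condition for $L^{(1,3)}$ and $L^{(2,3)}$ is the same partition $\{X_1,\dots,X_m\}$ that appears in \eqref{Eq:DSQuasi2} and \eqref{Eq:DSQuasi3}. Since all three conjugates live on the common underlying set $X$ and the definition of division sudoku in \pref{Pr:DSChar} uses this common partition throughout, there is nothing to check beyond unwinding the notation.
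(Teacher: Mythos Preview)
Your proposal is correct and follows essentially the same approach as the paper: a pigeonhole argument showing that each minisquare contains all symbols iff no symbol is repeated, which is exactly the negation of a failure of \eqref{Eq:DSQuasi1}, followed by invoking \pref{Pr:DSChar} together with the identifications $Q^{(1,3)}=(X,\rdiv)$ and $Q^{(2,3)}=(X,\ldiv)$ to obtain \eqref{Eq:DSQuasi2} and \eqref{Eq:DSQuasi3}. The paper's proof is more terse but structurally identical.
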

\begin{proof}
We have $x\sim u$ and $y\sim v$ if and only if $(x,y,x\cdot y)$, $(u,v,u\cdot v)$ belong to the same minisquare, say $M$. The implication
\begin{displaymath}
    x\cdot y=u\cdot v\implies (x,y)=(u,v)
\end{displaymath}
then says that no symbol appears twice in $M$. Equivalently, every symbol appears in $M$, which is the sudoku condition by definition. The rest follows from Proposition \ref{Pr:DSChar} and the fact that $Q^{(1,3)} = (Q,\rdiv)$, $Q^{(2,3)} = (Q,\ldiv)$.
\end{proof}

For a quasigroup $(X,\cdot,\rdiv,\ldiv)$ and $x\in X$ consider the three bijections $L_x$, $R_x$, $D_x\in\sym{X}$ defined by
\begin{displaymath}
    L_x(y) = x\cdot y,\quad R_x(y) = y\cdot x,\quad D_x(y) = x\rdiv y.
\end{displaymath}
The translations $L_x$, $R_x$ are well-known and the bijections $D_x$ have been studied in \cite{Belousov, StanovskyVojtechovsky}, for instance.

\begin{definition}
Given an equivalence relation $\sim$ on $X$ and $\theta\in\sym{X}$, we say that $\theta$ \emph{shreds} $\sim$ if whenever $u\sim v$ and $u\ne v$ then $\theta(u)\not\sim\theta(v)$. Equivalently, $\theta$ shreds $\sim$ if whenever $u\sim v$ and $\theta(u)\sim\theta(v)$ then $u=v$.
\end{definition}

Note that if $\{X_1,\dots,X_m\}$ is a sudoku partition of $X$ and $\sim$ is the corresponding equivalence relation, then $\theta\in\sym{X}$ shreds $\sim$ if and only if $|\theta(X_i)\cap X_j|=1$ for every $1\le i$, $j\le m$.

\begin{proposition}\label{Pr:DSPerms}
Let $\{X_1,\dots,X_m\}$ be a sudoku partition of $X$ and let $\sim$ be the corresponding equivalence relation. Let $Q=(X,\cdot,\rdiv,\ldiv)$ be a quasigroup. Consider the conditions
\begin{align}
    &\text{for all }x,\,u,\,v\in X\text{ if }u\sim v\text{ and }x\rdiv u\sim x\rdiv v\text{ then }u=v,\label{Eq:DSPerms1}\\
    &\text{for all }x,\,u,\,v\in X\text{ if }u\sim v\text{ and }x\cdot u\sim x\cdot v\text{ then }u=v,\label{Eq:DSPerms2}\\
    &\text{for all }x,\,u,\,v\in X\text{ if }u\sim v\text{ and }u\cdot x\sim v\cdot x\text{ then }u=v.\label{Eq:DSPerms3}
\end{align}
Then $Q$ is a sudoku if and only if \eqref{Eq:DSPerms1} holds, which happens if and only if for every $x\in X$ the bijection $D_x$ shreds $\sim$. Furthermore, $Q$ is a division sudoku if and only if all \eqref{Eq:DSPerms1}, \eqref{Eq:DSPerms2}, \eqref{Eq:DSPerms3} hold, which happens if and only if for every $x\in X$ the bijections $D_x$, $L_x$, $R_x$ shred $\sim$.
\end{proposition}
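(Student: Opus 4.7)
The plan is to prove the proposition in two stages: first observe the trivial ``shredding'' reformulations of conditions \eqref{Eq:DSPerms1}--\eqref{Eq:DSPerms3}, then reduce everything to the quasi-equational Proposition~\ref{Pr:DSQuasi} by showing \eqref{Eq:DSPerms$i$}\,$\Leftrightarrow$\,\eqref{Eq:DSQuasi$i$} for $i=1,2,3$.

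First I would note that by the very definition of a shredder, ``$D_x$ shreds $\sim$ for every $x\in X$'' says exactly: for all $u,v$, if $u\sim v$ and $D_x(u)=x\rdiv u\sim x\rdiv v=D_x(v)$ then $u=v$, which is verbatim \eqref{Eq:DSPerms1}; and analogously \eqref{Eq:DSPerms2} (resp.~\eqref{Eq:DSPerms3}) is just the statement that $L_x$ (resp.~$R_x$) shreds $\sim$ for every $x$. So the ``iff for every $x$'' parts of the proposition are essentially definitional and require no work.

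The substantive step is establishing \eqref{Eq:DSQuasi$i$}\,$\Leftrightarrow$\,\eqref{Eq:DSPerms$i$} for each $i$; the pattern in each case is to insert/extract a common third element via one of the quasigroup identities $(x\cdot y)\rdiv y=x$, $(x\rdiv y)\cdot y=x$, $x\cdot(x\ldiv y)=y$, $x\ldiv(x\cdot y)=y$. For instance, to get \eqref{Eq:DSPerms1}\,$\Rightarrow$\,\eqref{Eq:DSQuasi1}, given $x\sim u$, $y\sim v$, $xy=uv=:z$, use $x=z\rdiv y$ and $u=z\rdiv v$, so $z\rdiv y\sim z\rdiv v$ with $y\sim v$; \eqref{Eq:DSPerms1} yields $y=v$, and then right cancellation in $xy=uy$ gives $x=u$. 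For the reverse, given $u\sim v$ and $w_1:=x\rdiv u\sim x\rdiv v=:w_2$, the identity $(x\rdiv y)\cdot y=x$ yields $w_1u=x=w_2v$, so \eqref{Eq:DSQuasi1} applied to $(w_1,u)$ and $(w_2,v)$ forces $u=v$. The proofs for $i=2$ (starting from $w:=x\rdiv y=u\rdiv v$, so $wy=x$, $wv=u$) and $i=3$ (starting from $w:=x\ldiv y=u\ldiv v$, so $xw=y$, $uw=v$) follow the same template, just feeding the hypothesis into whichever of \eqref{Eq:DSPerms1}--\eqref{Eq:DSPerms3} has the right operation.

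Assembling: Proposition~\ref{Pr:DSQuasi} says $Q$ is a sudoku iff \eqref{Eq:DSQuasi1}, and a division sudoku iff \eqref{Eq:DSQuasi1}\,$\wedge$\,\eqref{Eq:DSQuasi2}\,$\wedge$\,\eqref{Eq:DSQuasi3}; combining with the three equivalences just established gives the quasigroup characterization in terms of \eqref{Eq:DSPerms1}--\eqref{Eq:DSPerms3}, and the shredding reformulation then follows from the first paragraph. I do not expect any real obstacle here; the only thing to watch is that the sudoku condition on the operation $\cdot$ (\eqref{Eq:DSQuasi1}) corresponds to shredding of $D_x$ rather than of $L_x$ or $R_x$ — an ``off-diagonal'' correspondence caused by the fact that fixing the product $z=xy$ of a pair $(x,y)$ reads the first coordinate off as $z\rdiv y$, so bookkeeping of which operation appears on which side is the one place where care is needed.
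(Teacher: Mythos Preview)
Your proposal is correct and follows essentially the same route as the paper: both reduce to Proposition~\ref{Pr:DSQuasi} by showing \eqref{Eq:DSQuasi1}$\Leftrightarrow$\eqref{Eq:DSPerms1}, \eqref{Eq:DSQuasi2}$\Leftrightarrow$\eqref{Eq:DSPerms2}, \eqref{Eq:DSQuasi3}$\Leftrightarrow$\eqref{Eq:DSPerms3} via the quasigroup identities, and both observe that the shredding formulation is definitional. The only cosmetic difference is that the paper packages each equivalence as a single bijective change of variables (e.g.\ substituting $x\rdiv y$ for $x$ and $u\rdiv v$ for $u$ in \eqref{Eq:DSQuasi1}), whereas you write out the two implications separately; your version is if anything more transparent about why both directions hold.
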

\begin{proof}
Upon substituting $x\rdiv y$ for $x$ and $u\rdiv v$ for $u$ into \eqref{Eq:DSQuasi1}, we obtain
\begin{displaymath}
    \text{for all }x,\,y,\,u,\,v\in X\text{ if }x\rdiv y\sim u\rdiv v,\,y\sim v\text{ and } x=u\text{ then }(x\rdiv y,y)=(u\rdiv v,v).
\end{displaymath}
Using $x=u$ and eliminating the variable $u$, we get
\begin{displaymath}
    \text{for all }x,\,y,\,v\in X\text{ if }x\rdiv y\sim x\rdiv v\text{ and }y\sim v\text{ then }y=v.
\end{displaymath}
This is \eqref{Eq:DSPerms1} up to renaming of variables, and it says that every $D_x$ shreds $\sim$.

Similarly, upon substituting $x\cdot y$ for $x$ and $u\cdot v$ for $u$ into \eqref{Eq:DSQuasi2}, we get
\begin{displaymath}
    \text{for all }x,\,y,\,u,\,v\in X\text{ if }x \cdot y\sim u\cdot v,\,y\sim v\text{ and }x=u\text{ then }(x\cdot y,y)=(u\cdot v,v),
\end{displaymath}
which can be rewritten as
\begin{displaymath}
    \text{for all }x,\,y,\,v\in X\text{ if }x \cdot y\sim x\cdot v\text{ and }y\sim v\text{ then }y=v.
\end{displaymath}
This is \eqref{Eq:DSPerms2}, which says that every $L_x$ shreds $\sim$.

Finally, upon substituting $x\cdot y$ for $y$ and $u\cdot v$ for $v$ into \eqref{Eq:DSQuasi3}, we get
\begin{displaymath}
    \text{for all }x,\,y,\,u,\,v\in X\text{ if }x\sim u,\,x \cdot y\sim u \cdot v\text{ and }y=v\text{ then }(x,x\cdot y)=(u,u\cdot v),
\end{displaymath}
which can be rewritten as
\begin{displaymath}
    \text{for all }x,\,y,\,u\in X\text{ if }x\sim u\text{ and }x \cdot y\sim u \cdot y\text{ then }x=u.
\end{displaymath}
This is \eqref{Eq:DSPerms3}, which says that every $R_x$ shreds $\sim$.
\end{proof}

\section{Invariants of division sudokus}\label{Sc:Invariants}

In this section we introduce powerful invariants of division sudokus and enumerate division sudokus of rank $3$. Our intent is to use only invariants that can be calculated by hand with a reasonable effort. We again start with two notions for classifying division sudokus while preserving their combinatorial properties.

\begin{definition}\label{Df:DsIsotopic}
Let $\{X_1,\dots,X_m\}$ be a sudoku partition of $X$. Two division sudokus $L_1$, $L_2$ on $X$ are \emph{ds-isotopic} (short for \emph{division sudoku-isotopic}) if there exists an isotopism $(\alpha,\beta,\gamma)$ from $L_1$ to $L_2$ such that $\alpha$, $\beta$, $\gamma$ are partition preserving. Two division sudokus $L_1$, $L_2$ are \emph{ds-paratopic} (short for \emph{division sudoku-paratopic}) if $L_1$ is ds-isotopic to a conjugate of $L_2$. The equivalence classes of division sudokus induced by ds-paratopy are called \emph{main ds-classes}.
\end{definition}

An \emph{intercalate} in a latin square is a $2\times 2$ latin subsquare. Obviously, a given intercalate is contained in one or two bands, and similarly for stacks and piles.

\begin{lemma}\label{Lm:IntercalateStructure}
Let $I$ be an intercalate of a division sudoku. Then at most one of the following situations can occur:
\begin{enumerate}
\item[(i)] $I$ is contained in a band,
\item[(ii)] $I$ is contained in a stack,
\item[(iii)] $I$ is contained in a pile.
\end{enumerate}
\begin{proof}
Suppose that both (i) and (ii) hold. Let $I$ be contained in a band $B_i$ and stack $S_j$. Let $a$ be one of the two symbols contained in $I$. Then $a$ occurs at least twice in the minisquare $B_i\cap S_j$, a contradiction.

We proceed similarly if any two conditions from among (i), (ii), (iii) hold, using the fact that the concept of division sudokus is invariant under conjugacy.
\end{proof}
\end{lemma}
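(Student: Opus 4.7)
The plan is to show the three cases (i), (ii), (iii) are pairwise mutually exclusive, and by the symmetry among bands, stacks, and piles in a division sudoku it suffices to rule out a single pair, say (i) and (ii).

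For the pair (i)--(ii), suppose $I$ is contained in both a band $B_i$ and a stack $S_j$. Then all four cells of $I$ lie in the minisquare $B_i\cap S_j$. But an intercalate involves only two distinct symbols, each of which occupies two of its four cells. Pick either one of these symbols, say $a$; then $a$ appears at least twice in $B_i\cap S_j$, contradicting the sudoku property that every symbol appears exactly once in each minisquare.

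To handle the remaining two pairs, I would invoke Proposition~\ref{Pr:DSChar}: a latin square $L$ is a division sudoku if and only if all six of its conjugates are sudokus. Conjugation permutes the coordinates of the orthogonal array, and under a conjugation $L \mapsto L^\theta$, the set of bands, stacks, and piles of $L$ is permuted according to $\theta$ (since the band/stack/pile are defined by fixing the first/second/third coordinate to lie in a block $X_i$). An intercalate of $L$, viewed as a suitable four-element subset of $O(L)$, transports to an intercalate of $L^\theta$, and its containment in a band/stack/pile is permuted in the same way. Therefore, assuming (i) and (iii) simultaneously for $L$ is equivalent, after applying the conjugation $(2,3)$ which swaps stacks and piles, to assuming (i) and (ii) simultaneously for the division sudoku $L^{(2,3)}$; this is impossible by the previous paragraph. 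Similarly, (ii) and (iii) for $L$ reduces via the $(1,3)$ conjugate to (i) and (ii) for $L^{(1,3)}$.

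The only non-routine point is the observation that bands, stacks, and piles are interchanged by conjugation exactly as one would expect from their definitions in terms of the first, second, and third coordinates of the orthogonal array; once this is in place, the argument reduces cleanly to the single minisquare contradiction and no further case analysis is needed.
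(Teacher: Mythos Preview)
Your proof is correct and follows essentially the same approach as the paper: a direct minisquare contradiction for the band--stack pair, followed by an appeal to conjugacy to cover the remaining two pairs. The only difference is that you spell out the conjugacy reduction in more detail than the paper does.
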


When $L$ is a division sudoku of rank $3$ and $I$ is an intercalate of $L$, we define $\mathrm{band}_L(I)$ (resp. $\mathrm{stack}_L(I)$, $\mathrm{pile}_L(I)$) to be the unique band (resp. stack, pile) of $L$ that either contains $I$ or is disjoint from $I$. Note that rank $3$ is necessary for this definition to make sense.

\begin{definition}
Let $L$ be a standard division sudoku of rank $3$. The \emph{intercalate structure invariant} $\iota_L$ of $L$ is a $\{0,1\}$-array indexed by bands, stacks and piles, where
$\iota_L(B_i,S_j,P_k) = 1$ if and only if there exists an intercalate $I$ of $L$ such that $(B_i,S_j,P_k) = (\mathrm{band}_L(I),\mathrm{stack}_L(I),\mathrm{pile}_L(I))$.

Two intercalate structure invariants are said to be \emph{equivalent} if one is obtained from the other by a permutation of band coordinates (rows), stack coordinates (columns) and pile coordinates (symbols).
\end{definition}

The array $\iota_L$ can be visualized as a hypergraph with vertices consisting of bands, stacks and piles, and hyperedges formed by those triangles $(B_i,S_j,P_k)$ for which $\iota_L(B_i,S_j,P_k)=1$. These graphs become unwieldy when $L$ is rich in intercalates.

\begin{example}
Below is a standard division sudoku and its intercalate structure invariant.

\medskip
\begin{center} 
\begin{tabular}{P{55mm}P{55mm}}
\sudoku{\cc1&4&7&\cc2&5&8&3&9&6}{8&2&5&9&3&6&4&1&7}{6&9&3&4&7&1&8&5&2}{\cc2&5&8&\cc1&6&9&7&3&4}{9&3&6&7&2&4&5&8&1}{4&7&1&5&8&3&2&6&9}{3&8&4&6&9&2&1&7&5}{5&1&9&3&4&7&6&2&8}{7&6&2&8&1&5&9&4&3}
&
\raisebox{-17mm}{
\begin{tikzpicture}[scale=0.5,every node/.style={circle,fill,inner sep=0pt,minimum size=1.5mm}]
    \node (S1) at (-2, -3.464) [label=below:$S_1$] {};
    \node (S2) at (0, -3.464) [label=below:$S_2$] {};
    \node (S3) at (2, -3.464) [label=below:$S_3$] {};
    \node (B1) at (-4, -1.732) [label=left:$B_1$] {};
    \node (B2) at (-3, 0) [label=left:$B_2$] {};
    \node (B3) at (-2, 1.732) [label=left:$B_3$] {};
    \node (P3) at (4, -1.732) [label=right:$P_3$] {};
    \node (P2) at (3, 0) [label=right:$P_2$] {};
    \node (P1) at (2, 1.732) [label=right:$P_1$] {};
    \draw [red]    (B1) -- (S3) -- (P3) -- (B1);
    \draw [blue]   (B2) -- (S2) -- (P1) -- (B2);
    \draw [green]  (B3) -- (S1) -- (P2) -- (B3);
    \draw [black]  (B3) -- (S3) -- (P1) -- (B3);
\end{tikzpicture}
}
\end{tabular}
\end{center}
\medskip

\noindent We have colored the hyperedges to improve legibility. For instance, the black hyperedge $(B_3,S_3,P_1)$ is there on account of the highlighted intercalate that avoids $B_3$, avoids $S_3$, and lies within $P_1$.
\end{example}

\begin{proposition}
If $L_1$, $L_2$ are ds-isotopic standard division sudokus of rank $3$ then $\iota_{L_1}$, $\iota_{L_2}$ are equivalent intercalate structure invariants.
\end{proposition}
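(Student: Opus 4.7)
The plan is to show that a ds-isotopism from $L_1$ to $L_2$ induces a relabeling of bands, stacks and piles that turns $\iota_{L_1}$ into $\iota_{L_2}$, which is precisely what the equivalence of intercalate structure invariants demands.

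First I would unpack the data of a ds-isotopism $(\alpha,\beta,\gamma)$ from $L_1$ to $L_2$. Since each of $\alpha$, $\beta$, $\gamma$ is partition preserving, there are induced permutations $\bar\alpha$, $\bar\beta$, $\bar\gamma\in\sym{3}$ on the set of bands, stacks, and piles, respectively, characterized by $\alpha(X_i)=X_{\bar\alpha(i)}$, and analogously for $\bar\beta$, $\bar\gamma$. The orthogonal array transforms as $O(L_2)=\{(\alpha(x),\beta(y),\gamma(z)):(x,y,z)\in O(L_1)\}$, so bands, stacks and piles get relabeled accordingly: $\alpha$ sends $B_i$ to $B_{\bar\alpha(i)}$, $\beta$ sends $S_j$ to $S_{\bar\beta(j)}$, and $\gamma$ sends $P_k$ to $P_{\bar\gamma(k)}$.

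Next I would observe that isotopisms carry intercalates to intercalates: if $I\subseteq O(L_1)$ is a $2\times 2$ latin subsquare with row indices $\{x_1,x_2\}$, column indices $\{y_1,y_2\}$ and symbol set $\{a,b\}$, then its image $I'$ under the coordinatewise action of $(\alpha,\beta,\gamma)$ is an intercalate of $L_2$ with row indices $\{\alpha(x_1),\alpha(x_2)\}$, column indices $\{\beta(y_1),\beta(y_2)\}$ and symbol set $\{\gamma(a),\gamma(b)\}$. Moreover, $I\mapsto I'$ is a bijection between intercalates of $L_1$ and intercalates of $L_2$, because $(\alpha^{-1},\beta^{-1},\gamma^{-1})$ is a ds-isotopism from $L_2$ to $L_1$.

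The key step is to verify that $\mathrm{band}_{L_2}(I')=\bar\alpha(\mathrm{band}_{L_1}(I))$, and analogously for stacks and piles. By \lref{Lm:IntercalateStructure} the intercalate $I$ lies in at most one band, stack or pile; write $\mathrm{band}_{L_1}(I)=B_i$. If $I\subseteq B_i$ then $\{x_1,x_2\}\subseteq X_i$, so $\{\alpha(x_1),\alpha(x_2)\}\subseteq X_{\bar\alpha(i)}$, which means $I'\subseteq B_{\bar\alpha(i)}$ and hence $\mathrm{band}_{L_2}(I')=B_{\bar\alpha(i)}$. If instead $I$ is disjoint from $B_i$, then $\{x_1,x_2\}\cap X_i=\emptyset$, so since $\alpha$ is a bijection that preserves the partition, $\{\alpha(x_1),\alpha(x_2)\}\cap X_{\bar\alpha(i)}=\emptyset$, hence $I'$ is disjoint from $B_{\bar\alpha(i)}$ and again $\mathrm{band}_{L_2}(I')=B_{\bar\alpha(i)}$. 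The arguments for $\mathrm{stack}$ and $\mathrm{pile}$ are identical, using $\bar\beta$ and $\bar\gamma$.

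Combining these facts yields $\iota_{L_1}(B_i,S_j,P_k)=\iota_{L_2}(B_{\bar\alpha(i)},S_{\bar\beta(j)},P_{\bar\gamma(k)})$ for all $i$, $j$, $k$, since the bijection $I\mapsto I'$ matches the triples $(\mathrm{band}_{L_1}(I),\mathrm{stack}_{L_1}(I),\mathrm{pile}_{L_1}(I))$ with $(\mathrm{band}_{L_2}(I'),\mathrm{stack}_{L_2}(I'),\mathrm{pile}_{L_2}(I'))$. This is exactly the statement that $\iota_{L_1}$ and $\iota_{L_2}$ differ by a permutation of band, stack and pile coordinates, so they are equivalent. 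I do not anticipate a genuine obstacle here; the only subtlety is being careful that the definition of $\mathrm{band}_L(I)$ behaves well both when $I$ lies inside a band and when it is split across two bands, which the case analysis above handles uniformly.
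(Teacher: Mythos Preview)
Your proof is correct and follows essentially the same approach as the paper's own proof: both use the ds-isotopism to induce permutations of bands, stacks and piles, and track how intercalates and their associated band/stack/pile labels transform. Your version is simply more detailed, explicitly constructing $\bar\alpha$, $\bar\beta$, $\bar\gamma$ and handling separately the case where $I$ is contained in a band versus disjoint from it, whereas the paper compresses this into a single sentence.
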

\begin{proof}
Let $(\alpha,\beta,\gamma)$ be a ds-isotopism from $L_1$ to $L_2$. Let $I_1$ be an intercalate of $L_1$ and let $I_2$ be the image of $I_1$ under $(\alpha,\beta,\gamma)$. If $I_1$ belongs to a band of $L_1$ then $I_2$ belongs to a band of $L_2$ since $\alpha$ is partition preserving. We can argue similarly for stacks and piles. 
\end{proof}

\begin{remark}
It is possible to strengthen the intercalate structure invariant in various ways. For instance, the entry $\iota_L(B_i,S_j,P_k)$ could \emph{count} the number of intercalates $I$ of $L$ such that $(B_i,S_j,P_k) = (\mathrm{band}_L(I),\mathrm{stack}_L(I),\mathrm{pile}_L(I))$, rather than just indicate whether such an intercalate exists. Alternatively, taking Lemma \ref{Lm:IntercalateStructure} into account, we could define $\mathrm{type}_L(I)$ by
\begin{displaymath}
    \mathrm{type}_L(I)=\left\{\begin{array}{ll}
        1,&\text{ if $I$ is contained in a band of $L$},\\
        2,&\text{ if $I$ is contained in a stack of $L$},\\
        3,&\text{ if $I$ is contained in a pile of $L$},\\
        4,&\text{ otherwise},
    \end{array}\right.
\end{displaymath}
and consider a $3\times 3\times 3\times 4$ array, where a given intercalate $I$ contributes $1$ to the cell indexed by $(\mathrm{band}_L(I),\mathrm{stack}_L(I),\mathrm{pile}_L(I),\mathrm{type}_L(I))$. It turns out that we do not need these refined invariants for our purposes.
\end{remark}


\begin{definition}
Let $L$ be a standard division sudoku of rank $3$. The \emph{minisquare structure invariant} $\mu_L$ of $L$ is a tripartite digraph obtained as follows. The nine vertices of $\mu_L$ are the bands, stacks and piles of $L$. Let $B$ be a band and $S$ a stack of $L$, and let $S_i$, $S_j$ be the two stacks different from $S$. For $1\le k\le 3$ let $C_{i,k}$ (resp. $C_{j,k}$) be the set of symbols in the $k$th minicolumn of $B\cap S_i$ (resp. $B\cap S_j$). Then $B\to S$ is a directed edge if and only if $\{C_{i,1},C_{i,2},C_{i,3}\} = \{C_{j,1},C_{j,2},C_{j,3}\}$. Directed edges between stacks and bands, bands and piles, piles and bands, stacks and piles, and piles and stacks are determined analogously using conjugacy.

Two minisquare structure invariants are said to be \emph{equivalent} if one is obtained from the other by a permutation of band vertices, stack vertices and pile vertices.
\end{definition}

\begin{example}
Below is a standard division sudoku and its minisquare structure invariant.

\medskip
\begin{center} 
\begin{tabular}{P{55mm}P{55mm}}
\sudoku{1&4&7&\cc2&\cc8&\cc5&\cc3&\cc9&\cc6}{8&2&5&\cc6&\cc3&\cc9&\cc4&\cc1&\cc7}{6&9&3&\cc7&\cc4&\cc1&\cc8&\cc5&\cc2}{2&7&4&8&5&3&1&6&9}{5&3&8&1&9&6&7&2&4}{9&6&1&4&2&7&5&8&3}{3&8&6&9&1&4&2&7&5}{4&1&9&5&7&2&6&3&8}{7&5&2&3&6&8&9&4&1}
&
\raisebox{-17mm}{
\begin{tikzpicture}[scale=0.5,every node/.style={circle,fill,inner sep=0pt,minimum size=1.5mm}]
    \node (S1) at (-2, -3.464) [label=below:$S_1$] {};
    \node (S2) at (0, -3.464) [label=below:$S_2$] {};
    \node (S3) at (2, -3.464) [label=below:$S_3$] {};
    \node (B1) at (-4, -1.732) [label=left:$B_1$] {};
    \node (B2) at (-3, 0) [label=left:$B_2$] {};
    \node (B3) at (-2, 1.732) [label=left:$B_3$] {};
    \node (P3) at (4, -1.732) [label=right:$P_3$] {};
    \node (P2) at (3, 0) [label=right:$P_2$] {};
    \node (P1) at (2, 1.732) [label=right:$P_1$] {};
    \draw [-latex] (B1)--(S1);
    \draw [-latex] (B2)--(S3);
    \draw [-latex] (B3)--(S2);
    \draw [-latex] (P1)--(S2);
    \draw [-latex] (P2)--(S1);
    \draw [-latex] (P3)--(S3);
\end{tikzpicture}
}
\end{tabular}
\end{center}
\medskip

\noindent For instance, there is a directed edge $B_1\to S_1$ because in the two highlighted minisquares $B_1\cap S_2$, $B_1\cap S_3$, the minicolumns contain the same sets of symbols, namely $\{1,5,9\}$, $\{2,6,7\}$ and $\{3,4,8\}$.
\end{example}

We immediately see:

\begin{proposition}
If $L_1$, $L_2$ are ds-isotopic standard division sudokus of rank $3$ then $\mu_{L_1}$, $\mu_{L_2}$ are equivalent minisquare structure invariants
\end{proposition}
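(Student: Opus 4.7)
The plan is to imitate the proof of the analogous statement for the intercalate structure invariant. A ds-isotopism $(\alpha,\beta,\gamma)$ from $L_1$ to $L_2$, being built from partition preserving permutations, induces bijections $B\mapsto\alpha(B)$ on bands, $S\mapsto\beta(S)$ on stacks, and $P\mapsto\gamma(P)$ on piles. These three bijections together form the candidate equivalence of tripartite vertex sets between $\mu_{L_1}$ and $\mu_{L_2}$; what remains is to verify that the edge sets match up.

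I would handle the defining edge type $B\to S$ first. Fix such an edge in $\mu_{L_1}$ and let $S_i$, $S_j$ be the two remaining stacks. Because $\beta$ preserves the column partition, the three minicolumns of $B\cap S_i$ in $L_1$ are sent bijectively onto the three minicolumns of $\alpha(B)\cap\beta(S_i)$ in $L_2$, and the set of symbols occupying a minicolumn is transformed by $\gamma$. Hence the unordered collection $\{C_{i,1},C_{i,2},C_{i,3}\}$ computed in $L_1$ becomes $\{\gamma(C_{i,1}),\gamma(C_{i,2}),\gamma(C_{i,3})\}$ in $L_2$, and likewise for $S_j$. Applying $\gamma$ set-wise to both sides of the defining equality of these collections then produces the corresponding equality in $L_2$, so $\alpha(B)\to\beta(S)$ is an edge of $\mu_{L_2}$. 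Running the same argument with the inverse ds-isotopism supplies the converse implication.

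For the other five edge types the plan is to invoke conjugacy. For every $\theta\in\sym{3}$, the triple obtained from $(\alpha,\beta,\gamma)$ by reordering according to $\theta$ is itself a ds-isotopism between the conjugates $L_1^\theta$ and $L_2^\theta$, and partition preservation is inherited since it holds for each of $\alpha$, $\beta$, $\gamma$ separately. Every remaining edge type is by construction a band-to-stack edge inside some fixed conjugate, so the argument above transports through unchanged. The only point requiring care is the bookkeeping that identifies minicolumns and symbol sets in a conjugate of $L_1$ with the analogous objects in the same conjugate of $L_2$ under the conjugated isotopism; this is purely formal and should present no real obstacle, since the definition of $\mu_L$ is symmetric by construction for exactly this purpose.
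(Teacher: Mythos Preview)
Your argument is correct and is exactly the kind of verification the paper has in mind; the paper itself gives no proof at all, stating only ``We immediately see'' before the proposition. Your explicit unpacking of how a ds-isotopism carries minicolumns to minicolumns and transports their symbol sets by $\gamma$, together with the appeal to conjugacy for the remaining five edge types, simply fleshes out what the authors regard as evident from the definitions.
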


\section{Enumeration}\label{Sc:Enumeration}

We enumerate division sudokus of rank $3$.

\subsection{Partial isotopisms, s-isotopisms and ds-isotopisms}

\begin{lemma}
Let $Q_1=(X,\cdot,\rdiv,\ldiv)$, $Q_2=(X,\circ,\drdiv,\dldiv)$ be quasigroups and let $(\alpha,\beta,\gamma)$ be an isotopism from $Q_1$ onto $Q_2$. If $A$, $B\subseteq X$ are such that $A\cdot B =\{a\cdot b:a\in A,\,b\in B\} = X$ then $(\alpha,\beta,\gamma)$ is determined by the values $(\alpha(a):a\in A)$ and $(\beta(b):b\in B)$.
\end{lemma}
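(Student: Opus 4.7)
The plan is to show that knowing $\alpha|_A$ and $\beta|_B$ forces $\gamma$ on all of $X$, and then that knowing $\gamma$ (together with any single value of $\alpha$ and any single value of $\beta$, which we already have) forces $\alpha$ and $\beta$ on all of $X$.

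First I would unpack the isotopism identity
\begin{displaymath}
    \alpha(x)\circ\beta(y) \;=\; \gamma(x\cdot y)\qquad\text{for all }x,y\in X.
\end{displaymath}
Restricting to $x=a\in A$ and $y=b\in B$, the left-hand side is completely determined by $\alpha|_A$ and $\beta|_B$, and the right-hand side equals $\gamma(a\cdot b)$. Since the set of products $\{a\cdot b:a\in A,\,b\in B\}$ equals $X$ by hypothesis, this pins down $\gamma(z)$ for every $z\in X$. Hence $\gamma$ is determined by $\alpha|_A$ and $\beta|_B$.

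Next, note that $A\cdot B=X$ forces $A\neq\emptyset$ and $B\neq\emptyset$, so we may fix $a_0\in A$ and $b_0\in B$. For arbitrary $x\in X$, applying the isotopism identity with $y=b_0$ gives $\alpha(x)\circ\beta(b_0)=\gamma(x\cdot b_0)$, and since $Q_2$ is a quasigroup we can solve for $\alpha(x)$ as
\begin{displaymath}
    \alpha(x) \;=\; \gamma(x\cdot b_0)\,\drdiv\,\beta(b_0).
\end{displaymath}
Both $\beta(b_0)$ (from the given data) and $\gamma(x\cdot b_0)$ (from the previous step) are known, so $\alpha(x)$ is determined. Symmetrically, with $x=a_0$, we get
\begin{displaymath}
    \beta(y) \;=\; \alpha(a_0)\,\dldiv\,\gamma(a_0\cdot y),
\end{displaymath}
which determines $\beta(y)$ for every $y\in X$. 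This completes the argument.

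There is no real obstacle here: the proof is essentially a two-step cascade. The only thing to be careful about is the order of deduction — one must first use the hypothesis $A\cdot B=X$ to extract $\gamma$ on all of $X$, and only then invert the isotopism equation in each coordinate using $Q_2$'s divisions to extend $\alpha$ and $\beta$ from their partial data to the full set. The nonemptiness of $A$ and $B$, needed to fix the base points $a_0$ and $b_0$, is a free consequence of $A\cdot B=X$.
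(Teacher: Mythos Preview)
Your proof is correct and follows essentially the same approach as the paper's: first determine $\gamma$ on all of $X$ from $\gamma(a\cdot b)=\alpha(a)\circ\beta(b)$ and $A\cdot B=X$, then recover $\alpha$ and $\beta$ via the right and left divisions in $Q_2$. Your version is slightly more explicit about fixing base points $a_0,b_0$ and noting that $A,B$ are nonempty, but the argument is the same.
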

\begin{proof}
Suppose that $\alpha$ is known on $A$ and $\beta$ is known on $B$. Then $\gamma(a\cdot b) = \alpha(a)\circ\beta(b)$ shows that $\gamma$ is known on $A\cdot B=X$. For any $b\in B$ and $x\in X$ we have $\alpha(x) = \gamma(x\cdot b)\drdiv \beta(b)$ and thus $\alpha$ is known on $X$. Similarly, for any $a\in A$ and $y\in X$ we have $\beta(y) = \alpha(a)\dldiv\gamma(a\cdot y)$, so $\beta$ is also known on $X$.
\end{proof}

\begin{corollary}\label{Cr:ItpData}
Let $\{X_1,\dots,X_m\}$ be a sudoku partition of $X$. Let $Q_1$ be a sudoku on $X$ and $Q_2$ a quasigroup. Then for any $1\le i$, $j\le m$, an isotopism $(\alpha,\beta,\gamma)$ from $Q_1$ to $Q_2$ is determined by the values $(\alpha(x):x\in X_i)$ and $(\beta(x):x\in X_j)$.
\end{corollary}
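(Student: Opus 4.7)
The plan is to invoke the preceding lemma directly with $A=X_i$ and $B=X_j$. All that needs verification is the hypothesis $A\cdot B = X$, and this is exactly the defining property of a sudoku.

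First I would observe that the minisquare of $Q_1$ indexed by band $B_i$ and stack $S_j$, namely the set $\{x\cdot y : x\in X_i,\,y\in X_j\}$, contains all symbols of $X$ because $Q_1$ is a sudoku. In other words, $X_i\cdot X_j = X$. Applying the preceding lemma with $A=X_i$ and $B=X_j$ then yields that the isotopism $(\alpha,\beta,\gamma)$ is determined by $(\alpha(x):x\in X_i)$ and $(\beta(x):x\in X_j)$, which is the claim.

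There is really no hard part here; the only subtlety is recognizing that the sudoku condition is precisely what guarantees the product $X_i\cdot X_j$ covers all of $X$, so the lemma applies. Note that no hypothesis on $Q_2$ beyond being a quasigroup is needed, because the lemma already reconstructs $\gamma$ from $\alpha|_{X_i}$ and $\beta|_{X_j}$ via $\gamma(a\cdot b)=\alpha(a)\circ\beta(b)$ on $X_i\cdot X_j=X$, and then reconstructs $\alpha$ and $\beta$ on all of $X$ using the divisions in $Q_2$.
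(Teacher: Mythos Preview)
Your proof is correct and is exactly the intended argument: the paper states this as an immediate corollary of the preceding lemma, and the only thing to check is that $X_i\cdot X_j=X$, which is precisely the sudoku condition on $Q_1$.
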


\begin{lemma}\label{Lm:SudItpData}
Let $\{X_1,\dots,X_m\}$ be a sudoku partition of $X$ and let $Q_1 = (X,\cdot)$, $Q_2 =(X,\circ)$ be sudokus. Then for any $1\le i$, $j\le m$ and any $(m-1)$-element subsets $X_i'\subseteq X_i$ and $X_j'\subseteq X_j$, an s-isotopism $(\alpha,\beta,\gamma)$ from $Q_1$ to $Q_2$ is determined by the values $(\alpha(x):x\in X_i')$ and $(\beta(x):x\in X_j')$.
\end{lemma}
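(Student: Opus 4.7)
The plan is to reduce directly to Corollary~\ref{Cr:ItpData} by recovering the full restrictions $\alpha|_{X_i}$ and $\beta|_{X_j}$ from the partial data. The additional leverage compared with Corollary~\ref{Cr:ItpData} comes from the fact that an s-isotopism requires $\alpha$ and $\beta$ to be partition preserving, and a partition preserving bijection on a block is determined once we know all but one of its values.

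The case $m=1$ is trivial since $|X|=1$, so assume $m\ge 2$. Because $\alpha$ is partition preserving, $\alpha(X_i)=X_{i^*}$ for some block $X_{i^*}$. The known values $(\alpha(x):x\in X_i')$ are $m-1\ge 1$ distinct elements of $X_{i^*}$, so they single out $X_{i^*}$ uniquely (distinct blocks of a partition are disjoint, and any element lies in exactly one block). The remaining element $x_0\in X_i\setminus X_i'$ is unique, and $\alpha(x_0)\in X_{i^*}$ must be the one element of $X_{i^*}$ not already used by $\alpha|_{X_i'}$. Hence $\alpha|_{X_i}$ is forced by $(\alpha(x):x\in X_i')$. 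The identical argument recovers $\beta|_{X_j}$ from $(\beta(x):x\in X_j')$.

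Having determined $\alpha$ on the whole block $X_i$ and $\beta$ on the whole block $X_j$, Corollary~\ref{Cr:ItpData} (applied to the isotopism $(\alpha,\beta,\gamma)$ between the quasigroups $Q_1$ and $Q_2$, and with the partition block indices $i$ and $j$) tells us that $(\alpha,\beta,\gamma)$ is determined.

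There is essentially no obstacle here; the argument is a short synthesis of partition preservation with the previously established corollary. The only point worth flagging explicitly is that one must have $|X_i'|=m-1\ge 1$ in order to identify the target block $X_{i^*}$ from the image values, which is why the edge case $m=1$ deserves a separate (trivial) sentence.
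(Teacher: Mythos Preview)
Your proof is correct and follows essentially the same approach as the paper's own proof: recover the missing value of $\alpha$ on $X_i$ (and $\beta$ on $X_j$) using partition preservation, then invoke Corollary~\ref{Cr:ItpData}. You are simply a bit more explicit than the paper in identifying the target block and in separating out the trivial $m=1$ case.
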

\begin{proof}
Since $\alpha$ maps blocks to blocks, it maps $X_i$ to some block $X_k$. We have $X_i = X_i'\cup\{x_i\}$ and $X_k = \alpha(X_i')\cup\{x_k\}$ for some $x_i\in X_i$, $x_k\in X_k$, and then $\alpha(x_i)=x_k$ is forced. Similarly for $\beta$ and we are done by Corollary \ref{Cr:ItpData}.
\end{proof}

Note that in the above three results we have assumed that the quasigroup operations for $Q_1$, $Q_2$ are fully known. However, the results can be sharpened by specifying exactly which parts of the multiplication and division tables of $Q_1$ and $Q_2$ need to be known. Lemma \ref{Lm:DataForPartialDS} below is an example of such a result. It is based on the partial division sudoku in Figure \ref{Fg:PartialDivisionSudoku}. We shall see later that every standard division sudoku of rank $3$ is ds-isotopic to a standard division sudoku that extends the partial table in Figure \ref{Fg:PartialDivisionSudoku}.

\begin{lemma}\label{Lm:DataForPartialDS}
Let $\{X_1,X_2,X_3\}$ be a sudoku partition of $X$ and let $Q_1=(X,\cdot)$, $Q_2=(X,\circ)$ be standard division sudokus whose tables extend the partial division sudoku table of Figure \emph{\ref{Fg:PartialDivisionSudoku}}. Then a ds-isotopism $(\alpha,\beta,\gamma)$ from $Q_1$ to $Q_2$ is determined by the values $\alpha^{-1}(1)$, $\alpha^{-1}(2)$, $\beta^{-1}(1)$, $\beta^{-1}(2)$, by the multiplication table of $Q_1$ and by the partial multiplication table of $Q_2$ given in Figure \emph{\ref{Fg:PartialDivisionSudoku}}.
\end{lemma}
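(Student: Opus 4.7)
The plan proceeds in three stages: first, pin down $\alpha$ and $\beta$ each on a single block from the given preimage data; second, read off $\gamma$ globally via the isotopism identity; third, extend $\alpha$ and $\beta$ to the remaining blocks by solving division equations in $Q_2$.

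For the first stage, since $(\alpha,\beta,\gamma)$ is a ds-isotopism, each of $\alpha$, $\beta$, $\gamma$ preserves the partition $\{X_1,X_2,X_3\}$. Because $\{1,2\}\subseteq X_1$, both $\alpha^{-1}(1)$ and $\alpha^{-1}(2)$ lie in the block $X_i:=\alpha^{-1}(X_1)$; the third element of $X_i$ is then forced to have image $3$, so $\alpha$ is fully determined on $X_i$. The analogous argument applied to $\beta^{-1}(1),\beta^{-1}(2)$ determines $\beta$ on a block $X_j$ with $\beta(X_j)=X_1$.

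For the second stage I would invoke the isotopism identity $\gamma(a\cdot b)=\alpha(a)\circ\beta(b)$ for $a\in X_i$ and $b\in X_j$. The sudoku property of $Q_1$ forces the minisquare $B_i\cap S_j$ to contain all nine symbols, i.e.\ $X_i\cdot X_j=X$, so this identity pins $\gamma$ down on all of $X$ provided each right-hand side can be evaluated. But $\alpha(a),\beta(b)\in X_1=\{1,2,3\}$, so $\alpha(a)\circ\beta(b)$ is an entry of the top-left minisquare of $Q_2$, which is recorded in the partial table of Figure~\ref{Fg:PartialDivisionSudoku}, while $a\cdot b$ uses only the known table of $Q_1$. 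For the third stage, to extend $\alpha$ beyond $X_i$, for arbitrary $a\in X$ and any fixed $b\in X_j$ I would rewrite the identity as $\alpha(a)=\gamma(a\cdot b)\drdiv\beta(b)$. Since $\beta(b)\in\{1,2,3\}$, performing this right division in $Q_2$ amounts to locating $\gamma(a\cdot b)$ in column $\beta(b)$ of $Q_2$'s multiplication table, so it uses only the first stack (columns $1$--$3$) of $Q_2$. Symmetrically, $\beta(b)=\alpha(a)\dldiv\gamma(a\cdot b)$ for fixed $a\in X_i$ determines $\beta$ on $X\setminus X_j$ from the first band (rows $1$--$3$) of $Q_2$.

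The main obstacle is really bookkeeping rather than mathematics: one must verify by inspection of Figure~\ref{Fg:PartialDivisionSudoku} that the prescribed partial table of $Q_2$ indeed contains the top-left minisquare together with the entire first band and first stack, since these are precisely the entries of $Q_2$ that the above computation consults. Once that is confirmed, the argument is a targeted sharpening of Lemma~\ref{Lm:SudItpData} showing that no further entries of $Q_2$ are needed.
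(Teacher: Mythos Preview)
Your first two stages match the paper's proof, but the third stage contains a genuine gap. You claim that the partial table in Figure~\ref{Fg:PartialDivisionSudoku} contains ``the top-left minisquare together with the entire first band and first stack''. It does not. Outside the top-left minisquare, the first band records only one entry in each of columns $4,\dots,9$ (on the diagonal of each minisquare), and likewise the first stack records only one entry in each of rows $4,\dots,9$. Thus in a given column $\beta(b)\in\{1,2,3\}$ of $Q_2$ only four of the nine entries are known, and your right division $\alpha(a)=\gamma(a\cdot b)\drdiv\beta(b)$, which amounts to locating an arbitrary symbol $\gamma(a\cdot b)$ in that column, generally cannot be carried out from the given data.

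The paper sidesteps this by pushing the division into $Q_1$, whose table is fully known. For each column $y\in\{4,\dots,9\}$ there is a unique row $x\in\{1,2,3\}$ with $x\circ y$ recorded in the partial table; writing $\gamma^{-1}(x\circ y)=\alpha^{-1}(x)\cdot\beta^{-1}(y)$ gives $\beta^{-1}(y)=\alpha^{-1}(x)\ldiv\gamma^{-1}(x\circ y)$, a left division in $Q_1$. The analogous argument along the first stack recovers $\alpha^{-1}$. So the fix is not merely bookkeeping: you must reverse the direction of the isotopism identity and divide in $Q_1$ rather than in $Q_2$, using precisely the one known entry per row or column that the sparse partial table supplies.
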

\begin{proof}
Let $\rdiv$, $\ldiv$ be the division operations in $Q_1$, which are certainly determined by the multiplication in $Q_1$. The permutation $\alpha$ maps blocks to blocks and hence the knowledge of $\alpha^{-1}(1)$, $\alpha^{-1}(2)$ determines $\alpha^{-1}(3)$. The entry $\beta^{-1}(3)$ is determined similarly from $\beta^{-1}(1)$, $\beta^{-1}(2)$. We have $\alpha(x)\circ \beta(y) = \gamma(x\cdot y)$ and thus $\gamma^{-1}(x\circ y) = \alpha^{-1}(x)\cdot\beta^{-1}(y)$ for every $x$, $y$. With $x$, $y\in \{1,2,3\}$, the values $\alpha^{-1}(x)\cdot\beta^{-1}(y)$ and $x\circ y$ are known and therefore determine $\gamma^{-1}(z)$ for every $z$, using the sudoku property in the top left minisquare of $Q_2$. We can now determine $\beta$ by focusing on the rest of the top band of $Q_2$. For every $4\le y\le 9$ there is a unique cell $(x,y)$ with $1\le x\le 3$ for which $x\circ y$ is given in the partial table. For instance, $1\circ 4=2$, so $\alpha^{-1}(1)\cdot\beta^{-1}(4) = \gamma^{-1}(2)$, which yields $\beta^{-1}(4) = \alpha^{-1}(1)\ldiv \gamma^{-1}(2)$. The permutation $\alpha$ is determined dually by focusing on the left stack of $Q_2$.
\end{proof}

\subsection{The enumeration}

\begin{figure}[ht]
\begin{center}
\def\cc{$\cdot$}
\sudoku{1&4&7&2&\cc&\cc&3&\cc&\cc}{8&2&5&\cc&3&\cc&\cc&1&\cc}{6&9&3&\cc&\cc&1&\cc&\cc&2}{2&\cc&\cc&\cc&\cc&\cc&\cc&\cc&\cc}{\cc&3&\cc&\cc&\cc&\cc&\cc&\cc&\cc}{\cc&\cc&1&\cc&\cc&\cc&\cc&\cc&\cc}{3&\cc&\cc&\cc&\cc&\cc&\cc&\cc&\cc}{\cc&1&\cc&\cc&\cc&\cc&\cc&\cc&\cc}{\cc&\cc&2&\cc&\cc&\cc&\cc&\cc&\cc}
\end{center}
\caption{Partial table of every standard division sudoku of rank $3$ up to ds-isotopism.}
\label{Fg:PartialDivisionSudoku}
\end{figure}

\begin{lemma}\label{Lm:Template}
Every standard division sudoku of rank $3$ is ds-isotopic to a standard division sudoku representative that extends the partial table in Figure \emph{\ref{Fg:PartialDivisionSudoku}}. The ds-isotopism class of each of these representatives consists of $2^{11}\cdot 3^8 =13436928$ standard division sudokus.
\end{lemma}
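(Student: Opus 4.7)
The plan is to establish the existence claim and the size count separately.

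For existence, the key tool is Proposition~\ref{Pr:DSChar}(iv): in a division sudoku every minirow and every minicolumn contains exactly one entry from each pile. Applied to the top-left minisquare of an arbitrary standard division sudoku $L$, this says each row and column of that $3\times 3$ array contains exactly one element from each block $X_1,X_2,X_3$. Hence the block-pattern of the top-left minisquare---the array whose $(i,j)$-entry records which block contains $L(i,j)$---is a Latin square of order $3$ on $\{X_1,X_2,X_3\}$. Since the group of triples of partition-preserving permutations acts at the block level transitively on the twelve such Latin squares, one first selects a ds-isotopism bringing $L$'s block-pattern to the cyclic pattern of the template. Within-block permutations of $\alpha,\beta,\gamma$ on $X_1$ next normalize the diagonal of the top-left minisquare to $1,2,3$, and within-block permutations of $\gamma$ on $X_2$ and $X_3$ then force the remaining entries of the top-left minisquare to match. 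Residual freedom in $\alpha|_{X_2\cup X_3}$ and $\beta|_{X_2\cup X_3}$ (including the block-swap of $X_2$ with $X_3$) is then used to normalize the twelve specified entries outside the top-left minisquare: by Proposition~\ref{Pr:DSChar}(iv) applied to the band $B_1$, each minirow of the top band contains a unique element of $X_1$, which can be placed at the required column by adjusting $\beta|_{X_2},\beta|_{X_3}$; the analogous argument using $\alpha|_{X_2\cup X_3}$ on the stack $S_1$ normalizes the six left-stack entries. Internal consistency of these normalizations follows from the division-sudoku structure.

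For the class-size count, apply Lemma~\ref{Lm:DataForPartialDS} with $Q_1=Q_2=L$, where $L$ extends the template: any ds-autotopism $(\alpha,\beta,\gamma)$ of $L$ is uniquely determined by the four parameters $\alpha^{-1}(1),\alpha^{-1}(2),\beta^{-1}(1),\beta^{-1}(2)$. Since $\alpha^{-1}(1)$ can be any of the nine elements of $X$ and then $\alpha^{-1}(2)$ must lie in the same block (two choices remain), and likewise for $\beta$, there are at most $9\cdot 2\cdot 9\cdot 2=324$ candidate ds-autotopisms. A case analysis organised by the block-level action of $(\alpha,\beta)$ on $\{X_1,X_2,X_3\}$ will show that exactly $162$ of these candidates define a genuine ds-autotopism of $L$ and that this count is independent of the specific extension. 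Since the group of ds-isotopisms of the standard partition has order $1296^3$, the ds-isotopism class of $L$ has size $1296^3/162 = 2^{11}\cdot 3^8 = 13436928$.

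The hard part will be the case analysis for the enumeration: one must identify exactly which combinations of block actions for $(\alpha,\beta)$ are compatible with the twelve off-minisquare entries of the template, and confirm the uniform count of $162$ valid candidates for every extension. The rigidity imposed by the template's top-band and left-stack entries is what forces this uniformity and makes the ds-isotopism class size the same for every representative.
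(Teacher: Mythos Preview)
Your existence argument is essentially the paper's (the paper uses only $\gamma$ to normalize the top-left minisquare and reserves $\alpha,\beta$ entirely for the band and stack, but this is a minor organizational difference).

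The class-size argument, however, contains a genuine error. Your central claim---that every template-extending division sudoku has exactly $162$ ds-autotopisms, independently of the extension---is false. The paper's subsequent computation shows that the $7741$ template-extending squares fall into $186$ ds-isotopism classes whose intersections with the template-extending set have sizes $1,3,6,9,18,27,54$; by orbit--stabilizer these correspond to ds-autotopism groups of orders $162,54,27,18,9,6,3$. So the full ds-isotopism classes range in size from $13436928$ up to $54\cdot 13436928$, and your case analysis cannot possibly yield a uniform $162$.

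What the lemma (somewhat imprecisely stated) actually needs, and what the paper's proof establishes, is not that each ds-isotopism class has size $2^{11}\cdot 3^8$, but that each \emph{fiber of the normalization map} has this size: that is, exactly $13436928$ standard division sudokus normalize to any given template-extending one. This is the content required for the enumeration $104015259648 = 7741\cdot 13436928$. The paper obtains it by directly counting the degrees of freedom consumed in the normalization: $6^4$ for $\gamma$, then $2\cdot 6^2\cdot 2$ for $\beta$ (the $(2,3)$-or-not on $X_1$, the within-block permutations on $X_2,X_3$, and the $X_2\leftrightarrow X_3$ swap), then $6^2\cdot 2$ for $\alpha$. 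The point is that these choices form a set $H$ of $13436928$ ds-isotopisms with the property that for each standard division sudoku $L$ there is a \emph{unique} $h\in H$ with $h(L)$ template-extending---uniqueness follows because at every step of the normalization the choice is forced by the current state of the square. Your orbit--stabilizer approach cannot recover this uniformity precisely because stabilizers vary; the right object to count is not the autotopism group but the set of normalizing isotopisms.
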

\begin{proof}
Using ds-isotopisms, we will bring a given standard division sudoku into the desired form while keeping track of the number of division sudokus identified in the process.

First consider the minisquare $M_{1,1}=B_1\cap S_1$. Either the symbols on the main diagonal of $M_{1,1}$ form a block, or the symbols on the main antidiagonal of $M_{1,1}$ form a block. By permuting the $3$ blocks of symbols and then permuting within each of the blocks of symbols, we can therefore assume that $M_{1,1}$ takes on one of the following two forms:
\begin{displaymath}
\begin{array}{ccc}
    1&4&7\\
    8&2&5\\
    6&9&3
\end{array}
\quad\text{or}\quad
\begin{array}{ccc}
    1&7&4\\
    8&5&2\\
    6&3&9
\end{array}.
\end{displaymath}
We have identified $|\sym{3}|^4 = 6^4$ division sudokus so far. By applying the column permutation $(2,3)$ to the second form, we obtain the first form, contributing another factor of $|C_2|=2$.

By permuting the columns $4$, $5$, $6$ we can assume that the symbols on the main diagonal of the minisquare $M_{1,2}=B_1\cap S_2$ form the block $\{1,2,3\}$. By permuting the columns $7$, $8$, $9$ we can assume that the symbols on the main diagonal of the minisquare $M_{1,3}=B_1\cap S_3$ form the block $\{1,2,3\}$. By exchanging the column blocks $\{4,5,6\}$ and $\{7,8,9\}$, if necessary, we can assume that $B_1$ is as in Figure \ref{Fg:PartialDivisionSudoku}, taking advantage of the latin property in the top $3$ rows. The factor here is $|\sym{3}|^2\cdot |C_2| = 6^2\cdot 2$.

We proceed analogously in the stack $S_1$, encountering another factor of $|\sym{3}|^2\cdot |C_2| = 6^2\cdot 2$.
\end{proof}

\begin{lemma}[\Keyboard]\label{Lm:Mace4}
There are $7741$ standard division sudokus extending the partial table in Figure \emph{\ref{Fg:PartialDivisionSudoku}}.
\end{lemma}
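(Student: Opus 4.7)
The plan is an exhaustive computer enumeration, as announced by the \Keyboard\ icon. I would carry it out in two independent ways and cross-check that they agree on the count, since a discrepancy would be almost certainly traceable to an encoding bug rather than to a deep issue.

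The first approach uses \texttt{Mace4} as a finite model search in the language of a single binary operation $\cdot$ on $\{1,\dots,9\}$. The axioms to feed in are: the quasigroup axioms (so that the Cayley table is a latin square); the $15$ unit equations $x\cdot y = z$ corresponding to the prescribed cells of Figure \ref{Fg:PartialDivisionSudoku}; and the three division-sudoku conditions \eqref{Eq:DSQuasi1}, \eqref{Eq:DSQuasi2}, \eqref{Eq:DSQuasi3} of Proposition \ref{Pr:DSQuasi}, with the equivalence $x\sim y$ spelled out as a disjunction over the three blocks of the standard partition $\{\{1,2,3\},\{4,5,6\},\{7,8,9\}\}$. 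Asking \texttt{Mace4} to enumerate all models and count them then returns the stated number.

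The second approach is a direct backtracking enumeration written in \texttt{GAP}, filling the $66$ empty cells of Figure \ref{Fg:PartialDivisionSudoku} one at a time in a fixed order. At each step I would prune by the latin-square row and column constraints, by the sudoku constraint on each $3\times 3$ minisquare, and most aggressively by Proposition \ref{Pr:DSChar}(iv): every minirow and every minicolumn must contain at most one entry from each of the three piles $P_1$, $P_2$, $P_3$, with equality forced once the minirow or minicolumn is complete. Because (iv) is already meaningful on partial fillings, it propagates strongly and keeps the search tree of manageable size.

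The only genuine difficulty is logistical rather than mathematical: one must make sure that the encoding faithfully captures Definition \ref{Df:DivisionSudoku} and that the enumeration finishes in a reasonable time. Running the two independent searches and checking that they both return $7741$ removes doubt on either point.
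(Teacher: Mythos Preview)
Your first approach is essentially the paper's proof: a \texttt{Mace4} model search over a quasigroup on nine elements, imposing the quasi-equational division-sudoku conditions of Proposition~\ref{Pr:DSQuasi} together with the prescribed cells from Figure~\ref{Fg:PartialDivisionSudoku}; the paper encodes $\sim$ via a block-function $B(x)=B(u)$ rather than a disjunction, but this is cosmetic. Two small numerical slips to fix: the partial table has $21$ prescribed cells (the $3\times 3$ top-left minisquare plus six cells in the rest of the top band plus six in the rest of the left stack), hence $60$ empty cells, not $15$ and $66$ as you write. Your second, independent \texttt{GAP} backtracking search is a sensible cross-check that the paper does not carry out.
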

\begin{proof}
This can be shown in a few seconds with a finite model builder. We used \texttt{Mace4} \cite{Mace4} with the following input file to search for all models of size $9$. The predicate \texttt{B} keeps track of the blocks of the sudoku partition. Note that \texttt{Mace4} labels elements in models of size $n$ with $\{0,\dots,n-1\}$. The division sudoku conditions in the input file correspond to those of Proposition \ref{Pr:DSQuasi}.
\begin{small}
\begin{verbatim}
% latin square (quasigroup)
x*(x\y) = y.    x\(x*y) = y.    (x*y)/y = x.    (x/y)*y = x.
% predicate B defines standard blocks
B(0)=0. B(1)=0. B(2)=0. B(3)=1. B(4)=1. B(5)=1. B(6)=2. B(7)=2. B(8)=2.
% divison sudoku conditions
all x all y all u all v ((B(x)=B(u) & B(y)=B(v) & x*y = u*v)->(x=u & y=v)).
all x all y all u all v ((B(x)=B(u) & B(y)=B(v) & x/y = u/v)->(x=u & y=v)).
all x all y all u all v ((B(x)=B(u) & B(y)=B(v) & x\y = u\v)->(x=u & y=v)).
% partial table
0*0=0. 0*1=3. 0*2=6. 1*0=7. 1*1=1. 1*2=4. 2*0=5. 2*1=8. 2*2=2.
0*3=1. 1*4=2. 2*5=0. 0*6=2. 1*7=0. 2*8=1.
3*0=1. 4*1=2. 5*2=0. 6*0=2. 7*1=0. 8*2=1.
\end{verbatim}
\end{small}
\end{proof}

Since $7741$ happens to be a prime number, we deduce that it is not possible to further identify division sudokus in the proof of Lemma \ref{Lm:Template} without splitting the argument into cases.

\begin{theorem}[\Keyboard]
There are $104015259648 = 2^{11}\cdot 3^8\cdot 7741$ standard division sudokus of rank $3$. There are $186$ standard division sudokus of rank $3$ up to ds-isotopism. There are $45$ standard division sudokus of rank $3$ up to ds-paratopism.
\end{theorem}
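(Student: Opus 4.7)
My plan is to package the preceding lemmas into a clean count for the first assertion, and then use \lref{Lm:DataForPartialDS} as a bounding tool for the two classification assertions, which I would finish by computer.

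For the total count, the proof of \lref{Lm:Template} supplies a specific set $S$ of $2^{11}\cdot 3^8 = 13436928$ ds-isotopisms such that every standard division sudoku $L$ is carried to an extension of the partial table $P$ by a uniquely determined $\sigma\in S$. Conversely, because the specified entries of $P$ pin down the images of enough rows, columns and symbols (for instance, the normal form of $M_{1,1}$ forces a symbol permutation in $S$ to be the identity, and analogous forcings apply along the top band and left stack), no nontrivial element of $S$ stabilizes any extension, so the assignment $(\sigma,R)\mapsto \sigma^{-1}(R)$ is a bijection between $S\times\{\text{extensions of }P\}$ and the set of all standard division sudokus of rank $3$. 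Combined with \lref{Lm:Mace4}, this gives
\[
    13436928\cdot 7741 \;=\; 2^{11}\cdot 3^8\cdot 7741 \;=\; 104015259648
\]
standard division sudokus of rank $3$.

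For the ds-isotopism count, \lref{Lm:Template} places at least one extension of $P$ into every ds-isotopism class, so it suffices to partition the $7741$ extensions into their ds-isotopism orbits. By \lref{Lm:DataForPartialDS}, any ds-isotopism between two such extensions is determined by the quadruple $(\alpha^{-1}(1),\alpha^{-1}(2),\beta^{-1}(1),\beta^{-1}(2))\in X_1^2\times X_1^2$ with distinct components in each pair, leaving at most $(3\cdot 2)^2 = 36$ candidates per pair. The plan is to bucket the $7741$ extensions using the invariants $\iota_L$ and $\mu_L$ of \secref{Sc:Invariants} (non-equivalent invariants certify non-isotopy), then within each bucket enumerate the $\leq 36$ candidate ds-isotopisms per pair and verify validity; a \texttt{GAP} implementation returns $186$ orbits. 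For the ds-paratopism count, it remains to coarsen these $186$ classes by identifying each representative $R$ with its conjugates $R^\theta$, $\theta\in\sym{3}$: for each class representative, compute each of the five nontrivial conjugates, re-normalize via \lref{Lm:Template} to land on some extension of $P$, and use the ds-isotopism orbit table just built to merge classes accordingly. This collapses the $186$ classes to the claimed $45$ main ds-classes.

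The main obstacle is the conceptual verification underpinning the first assertion: confirming that the normalization of \lref{Lm:Template} is deterministic in both directions and therefore a genuine bijection, so that $13436928\cdot 7741$ is exact rather than merely an upper bound. Once this is settled, \lref{Lm:DataForPartialDS} reduces all subsequent work to bounded, bucketed computer searches well within the reach of \texttt{GAP}.
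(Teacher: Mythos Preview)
Your approach is essentially the paper's, with minor implementation differences and one slip. For the total count, the paper simply combines \lref{Lm:Template} and \lref{Lm:Mace4}; your bijection framing correctly unpacks what the proof of \lref{Lm:Template} already establishes (each of the $2^{11}\cdot 3^8$ choices in the normalization is forced, so the map is $13436928$-to-one onto the set of extensions).

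For the ds-isotopism count, your bound of $(3\cdot 2)^2=36$ candidate quadruples is wrong: a partition-preserving $\alpha$ need not fix the block $X_1$, so $\alpha^{-1}(1)$ can be any of the $9$ elements of $X$, with $\alpha^{-1}(2)$ then one of the $2$ remaining elements of its block. This gives $18\cdot 18=324$ candidates, not $36$. The paper uses precisely this $324$-element parameter space, but organized differently: rather than testing pairs, it fixes $Q_1$, runs through all $324$ parameter choices to \emph{generate} every ds-isotopic image $Q_2$ extending the partial table, and records which of the $7741$ extensions are hit. The invariants $\iota,\mu$ are not used at this stage in the paper.

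For the ds-paratopism count, your plan to re-normalize each conjugate via \lref{Lm:Template} and then look it up in the already-computed orbit table is a valid route. The paper takes a different one: it buckets the $186$ representatives by the intercalate invariant and then, within each bucket, tests conjugates for ds-isotopism directly via \lref{Lm:SudItpData} (noting explicitly that \lref{Lm:DataForPartialDS} does not apply, since a conjugate need not extend the partial table of Figure~\ref{Fg:PartialDivisionSudoku}). Both approaches work; yours reuses the ds-isotopism table, the paper's avoids the re-normalization step.
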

\begin{proof}
The first result follows from Lemmas \ref{Lm:Template} and \ref{Lm:Mace4}.

The $7741$ representatives from Lemma \ref{Lm:Mace4} must be filtered up to ds-isotopism. In general, filtering thousands of quasigroups of order $9$ up to isotopism is a nontrivial problem. Even the case of ds-isotopisms is hard, since there are $1296=6^4$ permutations in $\sym{9}$ that preserve the standard partition. We could take two of the $7741$ quasigroups at a time and check for a ds-isotopism using Lemma \ref{Lm:SudItpData}. Each of these checks is fast since only the parameters $\alpha(1)$, $\alpha(2)$, $\beta(1)$, $\beta(2)$ need to be given, but there are $\binom{7741}{2}$ checks to be conducted.

Taking advantage of Lemma \ref{Lm:DataForPartialDS} is much faster. Let $Q_1 = (X,\cdot)$ be one of the $7741$ division sudokus. We choose $\alpha^{-1}(1)$ arbitrarily and then $\alpha^{-1}(2)$ in the same block as $\alpha^{-1}(1)$. Similarly for $\beta^{-1}(1)$ and $\beta^{-1}(2)$. We then complete $\alpha$, $\beta$ and $\gamma$ from the knowledge of $Q_1$ and the partial table of $Q_2$ as in the proof of Lemma \ref{Lm:DataForPartialDS}. If the resulting maps are partition preserving permutations, we set $x\circ y = \gamma(\alpha^{-1}(x)\cdot\beta^{-1}(y))$, hence constructing the unique quasigroup $Q_2=(X,\circ)$ such that $(\alpha,\beta,\gamma)$ is a ds-isotopism from $Q_1$ to $Q_2$. If $Q_2$ is among the $7741$ representatives, we add it to the ds-isotopism class of $Q_1$. We implemented this algorithm in \texttt{GAP}. In $3$ seconds, the algorithm returns $186$ representatives with classes of sizes $1$ (once), $3$ ($9$ times), $6$ ($3$ times), $9$ ($22$ times), $18$ ($7$ times), $27$ ($15$ times) and $54$ ($129$ times).

Finally, for main ds-classes, we first subdivide the $186$ representatives according to the intercalate structure invariant, cf. Section \ref{Sc:Invariants}. Then, given a subset of representatives with the same invariant, we check if any two lie in the same main ds-class by generating all $6$ conjugates and checking for a ds-isotopism. Note that the conjugates do not necessarily have the form of Figure \ref{Fg:PartialDivisionSudoku} and we therefore cannot use Lemma \ref{Lm:DataForPartialDS}, but Lemma \ref{Lm:SudItpData} applies. We obtained $45$ representatives in about $1$ minute.
\end{proof}

The $186$ standard division sudokus of rank $3$ up to ds-isotopism are given in the Appendix, labeled $\DS(9,1)$, $\dots$, $\DS(9,186)$. These are the pairwise structurally different division sudokus of rank $3$ when the roles of rows, columns and symbols are fixed.

\begin{remark}
Each division sudoku $\DS(9,i)$ is ds-isotopic to an idempotent quasigroup, that is, a quasigroup in which the identity $x\cdot x=x$ holds. (It suffices to permute the rows and columns in the second block and the rows and columns in the third block.) For convenience, we will sometimes work with idempotent versions of the division sudokus $\DS(9,i)$.
\end{remark}

\begin{table}[ht]
\caption{The main ds-classes of standard division sudokus of rank $3$. Integers $i$, $j$ are listed in the same set if and only if $\DS(9,i)$, $\DS(9,j)$ belong to the same main ds-class.}\label{Tb:MainDSClasses}
\begin{center}
\begin{small}
\begin{tabular}{lll}
\{1,\,53,\,135\}&                         \{2,\,35,\,156\}&                         \{3,\,40,\,42,\,149,\,163,\,180\}\\
\{4\}&                                    \{5,\,24,\,46,\,64,\,123,\,126\}&         \{6,\,12,\,14,\,29,\,43,\,102\}\\
\{7,\,37,\,157\}&                         \{8,\,32,\,47,\,142,\,161,\,172\}&        \{9,\,10,\,49\}\\
\{11,\,31,\,52,\,134,\,159,\,171\}&       \{13,\,26,\,56,\,65,\,125,\,127\}&        \{15,\,39,\,58,\,73,\,98,\,101\}\\
\{16,\,55,\,150\}&                        \{17\}&                                   \{18,\,19,\,21,\,33,\,60,\,117\}\\
\{20,\,62,\,131\}&                        \{22,\,34,\,61,\,75,\,118,\,119\}&        \{23,\,36,\,63,\,132,\,158,\,170\}\\
\{25,\,28,\,67,\,100,\,111,\,113\}&       \{27,\,69,\,148\}&                        \{30,\,41,\,80,\,155,\,164,\,181\}\\
\{38,\,45,\,103\}&                        \{44,\,48,\,77,\,109,\,137,\,139\}&       \{50,\,54,\,133\}\\
\{51\}&                                   \{57,\,140,\,174\}&                       \{59,\,99,\,107\}\\
\{66,\,124,\,168\}&                       \{68\}&                                   \{70,\,130,\,165,\,166,\,173,\,177\}\\
\{71,\,122,\,162,\,169,\,184,\,185\}&     \{72,\,78,\,83,\,95,\,96,\,97\}&          \{74,\,82,\,85,\,104,\,138,\,145\}\\
\{76,\,120\}&                             \{79,\,84,\,93,\,116,\,151,\,152\}&       \{81,\,115,\,146\}\\
\{86,\,88,\,106,\,112,\,143,\,147\}&      \{87,\,144,\,178\}&                       \{89,\,92,\,110,\,114,\,141,\,154\}\\
\{90,\,153,\,182\}&                       \{91,\,105,\,121,\,129,\,160,\,186\}&     \{94,\,108,\,128,\,136,\,167,\,176\}\\
\{175\}&                                  \{179\}&                                  \{183\}
\end{tabular}
\end{small}
\end{center}
\end{table}

Table \ref{Tb:MainDSClasses} lists the $45$ main ds-classes of standard division sudokus of rank $3$. These are the pairwise structurally different division sudokus of rank $3$ when the roles of rows, columns and symbols are allowed to be permuted.

Finally, it is reasonable to ask which of the $186$ standard division sudokus of rank $3$ up to ds-isotopism are isotopic as quasigroups. Using Lemma \ref{Lm:SudItpData}, several hours of computing time yield:

\begin{proposition}[\Keyboard]
There are $183$ standard division sudokus of rank $3$ up to isotopism. Among the $186$ standard division sudokus of rank $3$ up to ds-isotopism, only the following pairs are isotopic: $\DS(9,18)$ and $\DS(9,19)$, $\DS(9,21)$ and $\DS(9,60)$, $\DS(9,33)$ and $\DS(9,117)$.
\end{proposition}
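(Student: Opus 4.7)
The plan is a computer-assisted isotopism test on the $186$ representatives $\DS(9,1),\dots,\DS(9,186)$. Because ds-isotopy is by definition partition-preserving isotopy, every ds-isotopy is an isotopy, so the isotopism equivalence classes on this list are obtained by possibly fusing some of the $186$ ds-isotopism classes. The task reduces to determining exactly which fusions occur. The bound $183 = 186 - 3$ will follow immediately once the three listed pairs are shown to be isotopic and no other fusions are detected.

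First I would prune the set of candidate pairs by computing cheap isotopism invariants that are insensitive to the sudoku partition. Natural choices are the multiset of cycle types of the left translations $\{L_x\}_{x\in X}$, of the right translations $\{R_x\}_{x\in X}$, the total number of intercalates, or any stronger invariant extracted from the orthogonal array. Two representatives with different invariant tuples cannot be isotopic, so the candidate pairs are restricted to those lying in a common invariant bucket. In practice this trims the list dramatically, since the $186$ ds-isotopism classes are already finely separated.

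For each remaining candidate pair $(Q_1,Q_2)=((X,\cdot),(X,\circ))$ I would exhaustively search for an isotopism $(\alpha,\beta,\gamma)$ from $Q_1$ to $Q_2$ using Corollary \ref{Cr:ItpData} with $X_i=X_j=\{1,2,3\}$. Every such isotopism is determined by the tuple $(\alpha(1),\alpha(2),\alpha(3),\beta(1),\beta(2),\beta(3))$; each of the two triples ranges over $9\cdot 8\cdot 7=504$ injective maps $\{1,2,3\}\to X$, giving $504^2\approx 2.5\times 10^5$ partial data per pair. For each partial datum, the top-left minisquare of $Q_1$ already contains all nine symbols (since $Q_1$ is a sudoku), so $\gamma$ is recovered globally from $\gamma(x\cdot y)=\alpha(x)\circ\beta(y)$ for $x,y\in\{1,2,3\}$. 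The remaining values of $\alpha$ and $\beta$ are then forced by $\alpha(x)=\gamma(x\cdot b)\drdiv\beta(b)$ and $\beta(y)=\alpha(a)\dldiv\gamma(a\cdot y)$, and a valid isotopism is registered whenever the reconstructed triple is a triple of bijections of $X$ satisfying $\gamma(x\cdot y)=\alpha(x)\circ\beta(y)$ for all $x,y\in X$.

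The main obstacle is purely the size of the search: with $\binom{186}{2}=17205$ pairs and up to $504^2$ partial isotopisms per pair, a naive sweep amounts to roughly $4\times 10^9$ completion attempts, consistent with the stated runtime of several hours; invariant filtering removes the majority of pairs before completion ever begins. Once the algorithm terminates, the only fusions found are $\DS(9,18)\sim\DS(9,19)$, $\DS(9,21)\sim\DS(9,60)$, and $\DS(9,33)\sim\DS(9,117)$, and these may be double-checked by exhibiting explicit isotopisms. The $186$ ds-isotopism classes therefore collapse into exactly $183$ isotopism classes, as claimed.
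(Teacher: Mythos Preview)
Your proposal is correct and matches the paper's approach, which consists only of the line ``Using Lemma~\ref{Lm:SudItpData}, several hours of computing time yield'' preceding the statement. Your citation of Corollary~\ref{Cr:ItpData} is in fact the more accurate reference here, since general isotopisms need not preserve blocks and therefore require all three values of $\alpha$ and $\beta$ on $\{1,2,3\}$ rather than two; the invariant-filtering step you add is a sensible optimization that the paper does not spell out.
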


\subsection{The distinguishing power of the invariants}

The two invariants introduced in Section \ref{Sc:Invariants} are quite powerful. By calculating the invariants for the $186$ standard division sudokus of rank $3$ up to ds-isotopism we obtain:

\begin{proposition}[\Keyboard] The intercalate structure invariant $\iota$ attains precisely $148$ distinct values on the set of standard division sudokus of rank $3$. The minisquare structure invariant $\mu$ attains precisely $139$ distinct values on the set of standard division sudokus of rank $3$. The combined invariant $(\iota,\mu)$ attains precisely $183$ distinct values on the set of standard division sudokus of rank $3$.
\end{proposition}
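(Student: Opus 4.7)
The proof is entirely computational. The plan is to apply the following routine to each of the $186$ standard division sudokus of rank $3$ up to ds-isotopism listed in the Appendix, then to count orbits under the relevant invariant-equivalence relations.

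For each $\DS(9,i)$, I would compute $\iota_{\DS(9,i)}$ by enumerating all intercalates (iterate over pairs of distinct rows and pairs of distinct columns and test for a $2\times 2$ latin subsquare), and for each intercalate $I$ determine the unique band, stack, and pile that either contain $I$ or are disjoint from $I$; these are well-defined by \lref{Lm:IntercalateStructure}. This yields the $3\times 3\times 3$ array $\iota_{\DS(9,i)}$. I would compute $\mu_{\DS(9,i)}$ by testing, for each of the six ordered type-pairs (bands to stacks, stacks to bands, bands to piles, piles to bands, stacks to piles, piles to stacks) and each ordered pair of vertices of those types, the directed edge condition from the definition: for instance, $B_a\to S_b$ is an edge if and only if the two triples of minicolumn symbol-sets in the minisquares $B_a\cap S_c$ and $B_a\cap S_d$ (with $\{b,c,d\}=\{1,2,3\}$) coincide as multisets, and analogously for the remaining five edge types via conjugacy.

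Next, canonicalize each invariant under the $\sym{3}\times\sym{3}\times\sym{3}$ action of order $216$ that independently permutes the band labels, the stack labels, and the pile labels. Brute force over all $216$ group elements, selecting the lexicographically smallest image as the canonical representative, is entirely feasible here since the invariants are tiny: $\iota$ fits in $27$ bits and $\mu$ has at most $54$ possible directed edges. Finally, count the number of distinct canonical values of $\iota$, of $\mu$, and of the pair $(\iota,\mu)$ across the $186$ representatives; these counts should come out to $148$, $139$, and $183$, respectively. No mathematical obstacle beyond implementation remains — the only care point is to keep band, stack, and pile labels distinguished throughout canonicalization, so that one does not accidentally quotient by paratopy-like symmetries (which would mix the three vertex types) instead of by the intended ds-isotopism action.
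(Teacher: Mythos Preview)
Your proposal is correct and matches the paper's approach: the paper simply states that the result is obtained ``by calculating the invariants for the $186$ standard division sudokus of rank $3$ up to ds-isotopism,'' and you have spelled out exactly how to carry out that computation, including the canonicalization step under $\sym{3}\times\sym{3}\times\sym{3}$ that makes the equivalence check effective.
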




Only the following pairs of division sudokus cannot be distinguished by the two invariants:
\begin{displaymath}
    \DS(9{,}91)\text{ and }\DS(9{,}105),\,\DS(9{,}121)\text{ and }\DS(9{,}129),\,\DS(9{,}160)\text{ and }\DS(9{,}186).
\end{displaymath}
These $6$ division sudokus belong to the same main ds-class. Let us have a closer look at them.

We start with the division sudoku $(Q,\cdot)$ below which is ds-isotopic to $\DS(9,121)$. For the convenience of the reader, we have also displayed the right division $\rdiv$ and the left division $\ldiv$ for $(Q,\cdot)$.

\medskip
\begin{center}
\begin{tabular}{P{35mm}P{35mm}P{35mm}}
\oplabel{$\cdot$}
\rowlabels{1}{2}{3}{4}{5}{6}{7}{8}{9}
\collabels{1}{2}{3}{4}{5}{6}{7}{8}{9}
\smalllabeledsudoku{1&6&7&8&3&4&9&2&5}{8&2&4&5&9&1&6&7&3}{5&9&3&2&6&7&1&4&8}{9&1&5&4&7&2&8&3&6}{6&7&2&3&5&8&4&9&1}{3&4&8&9&1&6&2&5&7}{2&5&9&6&8&3&7&1&4}{7&3&6&1&4&9&5&8&2}{4&8&1&7&2&5&3&6&9}
&
\oplabel{$\rdiv$}
\rowlabels{1}{2}{3}{4}{5}{6}{7}{8}{9}
\collabels{1}{2}{3}{4}{5}{6}{7}{8}{9}
\smalllabeledsudoku{1&4&9&8&6&2&3&7&5}{7&2&5&3&9&4&6&1&8}{6&8&3&5&1&7&9&4&2}{9&6&2&4&8&1&5&3&7}{3&7&4&2&5&9&8&6&1}{5&1&8&7&3&6&2&9&4}{8&5&1&9&4&3&7&2&6}{2&9&6&1&7&5&4&8&3}{4&3&7&6&2&8&1&5&9}
&
\oplabel{$\ldiv$}
\rowlabels{1}{2}{3}{4}{5}{6}{7}{8}{9}
\collabels{1}{2}{3}{4}{5}{6}{7}{8}{9}
\smalllabeledsudoku{1&8&5&6&9&2&3&4&7}{6&2&9&3&4&7&8&1&5}{7&4&3&8&1&5&6&9&2}{2&6&8&4&3&9&5&7&1}{9&3&4&7&5&1&2&6&8}{5&7&1&2&8&6&9&3&4}{8&1&6&9&2&4&7&5&3}{4&9&2&5&7&3&1&8&6}{3&5&7&1&6&8&4&2&9}
\end{tabular}
\end{center}
\medskip

The following result can be verified by inspection of the multiplication table $(Q,\cdot)$. The main diagonal of the table will be called just \emph{diagonal}, and a minisquare intersecting the diagonal will be called a \emph{diagonal minisquare}.

\begin{lemma}\label{Lm:1}
The quasigroup $(Q,\cdot)$ is idempotent and contains exactly 18 intercalates. Each intercalate contains exactly one cell from the diagonal. For each diagonal cell there are exactly 2 intercalates that include that cell. In each intercalate, the corner opposite from the diagonal cell lies in a diagonal minisquare. Each nondiagonal cell that is inside a diagonal minisquare is contained in exactly one intercalate.
\end{lemma}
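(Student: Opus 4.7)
The plan is to read the assertions off the displayed multiplication table by a short structural analysis combined with a bounded case-check; the only creative step is the parameterisation of intercalates through diagonal cells.

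Idempotency is immediate by inspection of the diagonal, which reads $1,2,\ldots,9$. The first structural reduction I would record is that no intercalate can contain two diagonal cells: two diagonal cells in the same row or column coincide, and two diagonal cells at opposite corners $(i,i),(\ell,\ell)$ force $i=\ell$ via the opposite-corners-equal rule of a $2\times 2$ latin subsquare together with idempotency. Hence each intercalate carries at most one diagonal cell.

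Next I would parameterise intercalates through a fixed diagonal cell $(i,i)$. Such an intercalate has the form $\{(i,i),(i,c),(r,i),(r,c)\}$ with $r,c\ne i$, $(r,c)=i$ and $(i,c)=(r,i)=v\ne i$. Fixing $r$, the value $v=L(r,i)$ is read off column $i$; the column $c$ is then doubly constrained by $L(i,c)=v$ and $L(r,c)=i$, and an intercalate exists iff these two forced columns coincide. Scanning the eight non-$i$ rows for each $i$ produces exactly two valid $r$ per $i$, giving $18$ intercalates through diagonal cells together with the ``two intercalates per diagonal cell'' clause. The main obstacle is then closing the total count at $18$, that is, ruling out intercalates missing the diagonal entirely. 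The cleanest route is pair-of-rows bookkeeping: for each unordered pair $\{r_1,r_2\}$ the intercalates on those rows are in bijection with the $2$-cycles of the column permutation $L_{r_1}^{-1}L_{r_2}$, so summing the number of such $2$-cycles over the $\binom{9}{2}=36$ row pairs counts every intercalate of $Q$ exactly once; a table scan confirms the sum is $18$.

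The last two assertions are bookkeeping driven by the enumeration. For each of the $18$ intercalates I would record the opposite corner $(r,c)$, check directly that $r$ and $c$ lie in the same block (so $(r,c)$ lies in a diagonal minisquare), and check that the side corners $(i,c)$ and $(r,i)$ lie in non-diagonal minisquares. An intercalate is then recovered from its opposite corner, since the table value $L(r,c)=i$ forces the diagonal corner $(i,i)$ and the other two corners $(i,c),(r,i)$ follow; thus intercalate $\mapsto$ opposite corner is injective. Since both sides have size $18=3\cdot(9-3)$, the map is a bijection onto the nondiagonal cells of the three diagonal minisquares, and because side corners miss diagonal minisquares, each such cell is contained in exactly one intercalate, namely as its opposite corner.
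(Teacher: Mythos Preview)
Your proposal is correct. The paper itself offers no proof beyond the sentence ``can be verified by inspection of the multiplication table $(Q,\cdot)$'', so your write-up is considerably more explicit than what the paper provides, while remaining in the same spirit of a table-driven check.

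A few remarks on the comparison. Your structural observations---that no intercalate can meet the diagonal twice (via idempotency), that the row-pair permutations $L_{r_1}^{-1}L_{r_2}$ count all intercalates by their $2$-cycles, and that the opposite-corner map is injective because $L(r,c)=i$ reconstructs the intercalate---genuinely compress the verification. One small improvement: you do not need to \emph{check} that the side corners $(i,c)$ and $(r,i)$ miss the diagonal minisquares. Once you have verified claim~5, so that $r$ and $c$ share a block, the sudoku property forces $i$ to lie in a different block (otherwise the entire intercalate would sit inside a single minisquare, which contains nine distinct symbols and hence no intercalate). Thus the ``side corners miss diagonal minisquares'' step is automatic, and your bijection argument for the last clause goes through without that extra scan.
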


\begin{lemma}\label{Lm:2}
Suppose that $(\alpha,\beta,\gamma)$ is an isotopism of $(Q,\cdot)$ onto either $(Q,\cdot)$ or the conjugate $(Q,\cdot)^{(1,2)}$. Then $(\alpha,\beta,\gamma)$ is a ds-isotopism and $\alpha=\beta=\gamma$.
\end{lemma}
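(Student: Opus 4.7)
The plan is to use the detailed intercalate data of \lref{Lm:1} together with the idempotency of $(Q,\cdot)$ to force $\alpha=\beta=\gamma$, and then to derive partition-preservation from the way the intercalates meet the diagonal minisquares.

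First, isotopisms carry intercalates to intercalates, and the image quasigroup---whether $(Q,\cdot)$ or $(Q,\cdot)^{(1,2)}$---has the same diagonal cells and the same diagonal minisquares as $(Q,\cdot)$, with an intercalate structure to which \lref{Lm:1} applies verbatim (for $(Q,\cdot)^{(1,2)}$ via the row-column transpose of each intercalate). By \lref{Lm:1} each of the nine diagonal cells lies in exactly two intercalates and each of the eighteen nondiagonal cells in a diagonal minisquare lies in exactly one; a short examination of the $18\cdot 4-9\cdot 2-18\cdot 1=36$ remaining corner incidences, distributed among the $54$ cells of the six nondiagonal minisquares, confirms that every such cell lies in at most one intercalate. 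Hence the diagonal is intrinsically characterized as the set of cells in at least two intercalates and is preserved by $(\alpha,\beta,\gamma)$.

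Second, since both $(Q,\cdot)$ and $(Q,\cdot)^{(1,2)}$ are idempotent, the diagonal cells on both sides are exactly $\{(x,x,x):x\in X\}$, so the image $(\alpha(x),\beta(x),\gamma(x))$ of the diagonal cell $(x,x,x)$ must again have the form $(y,y,y)$. This forces $\alpha(x)=\beta(x)=\gamma(x)$ for every $x$, so we may write $\sigma:=\alpha=\beta=\gamma$. It remains to show that $\sigma$ permutes the three diagonal minisquares $M_1,M_2,M_3$, which, via $X_i=\{x:(x,x,x)\in M_i\}$, is equivalent to partition-preservation.

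Third, by the last two sentences of \lref{Lm:1} every intercalate carries an unordered pair $\{M_i,M_j\}$ of distinct diagonal minisquares---the one containing its diagonal corner and the one containing its opposite corner---and $\sigma$ preserves this labeling. A brief direct check from the $18$ intercalates of $Q$ shows that the grouping of the nine diagonal cells into the triples given by $M_1$, $M_2$, $M_3$ is the only partition of the diagonal into three blocks of three that is consistent with this labeled pairing; hence $\sigma$ permutes $\{M_1,M_2,M_3\}$ and $(\sigma,\sigma,\sigma)$ is a ds-isotopism, as required.

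The main obstacle is the two inspection steps: that no cell in a nondiagonal minisquare lies in more than one intercalate, and that the partition of the diagonal into $M_1,M_2,M_3$ is intrinsic to the intercalate-on-diagonal-minisquare incidence. Both reduce to short verifications against the multiplication table of $Q$ displayed above, and neither requires more than listing the $18$ intercalates of $Q$ with their two distinguished corners.
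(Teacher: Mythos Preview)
Your proof is correct and uses the same ingredients as the paper's: characterize the diagonal as the cells lying in two intercalates, combine with idempotency to get $\alpha=\beta=\gamma$, and use the intercalate structure from \lref{Lm:1} to force partition preservation. The only difference is the order---the paper first argues that diagonal minisquares are sent to diagonal minisquares (hence a ds-isotopism) and then deduces $\alpha=\beta=\gamma$, whereas you do the reverse; your ordering is arguably cleaner since it makes the partition-preservation of $\gamma$ immediate. Your Step~3 can be shortened: since by \lref{Lm:1} the $18$ opposite corners are exactly the $18$ nondiagonal cells of the diagonal minisquares, every ordered pair $(c,d)$ with $c\ne d$ and $c\sim d$ occurs as an opposite corner, and then $\sigma(c)\sim\sigma(d)$ follows directly from \lref{Lm:1} applied to the target---no separate ``consistency'' check is needed.
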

\begin{proof}
The intercalate structure of $(Q,\cdot)$ implies that every isotopism sends a diagonal minisquare upon a diagonal minisquare. Hence every isotopism is a ds-isotopism. Since every isotopism sends intercalates onto intercalates and since the only cells with two intercalates through them are on the diagonal, we have $\alpha=\beta$. Using idempotency, we then have $\alpha(x) = \alpha(x)\cdot \alpha(x)= \alpha(x) \cdot \beta(x) = \gamma(x\cdot x) = \gamma(x)$ for each $x\in Q$. Note that the calculation goes through even if the target quasigroup is $(Q,\cdot)^{(1,2)}$ since $\alpha(x)\cdot\beta(x)=\beta(x)\cdot\alpha(x)$ here.
\end{proof}

\begin{lemma}
The quasigroup $(Q,\cdot)$ is not isotopic to $(Q,\cdot)^{(1,2)}$.
\end{lemma}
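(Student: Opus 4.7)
The plan is to use Lemma \ref{Lm:2} to reduce the statement to showing that $(Q,\cdot)$ admits no antiautomorphism, and then close the argument by a cycle-type comparison between left and right translations.

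First I would unpack what an isotopism onto $(Q,\cdot)^{(1,2)}$ means. Since the operation in $(Q,\cdot)^{(1,2)}$ is $a \mapsto b \cdot a$, such an isotopism $(\alpha,\beta,\gamma)$ satisfies $\beta(y)\cdot \alpha(x) = \gamma(x\cdot y)$ for all $x,\,y\in Q$. By Lemma \ref{Lm:2} we have $\alpha=\beta=\gamma$, and the condition collapses to $\alpha(y)\cdot \alpha(x) = \alpha(x\cdot y)$; that is, $\alpha$ must be an antiautomorphism of $(Q,\cdot)$. The problem thus reduces to proving that no such $\alpha$ exists.

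Writing $\lambda_x(z)=x\cdot z$ and $\rho_x(z)=z\cdot x$, the antiautomorphism law rearranges to $\alpha\,\rho_y\,\alpha^{-1} = \lambda_{\alpha(y)}$ for every $y\in Q$. Consequently, each right translation $\rho_y$ must be conjugate in $\sym{Q}$ to some left translation $\lambda_x$ and share its cycle type. I would finish by reading these cycle types off the displayed table: every row yields a $\lambda_x$ of cycle type $1+4+4$ (fixing the idempotent $x$ and splitting the remaining eight elements into two disjoint $4$-cycles), while every column yields a $\rho_y$ of cycle type $1+2+3+3$. These types are incompatible, so no $\rho_y$ can be conjugate to any $\lambda_x$, and hence no antiautomorphism of $(Q,\cdot)$ exists.

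The only real obstacle is routine bookkeeping, namely verifying the uniformity of cycle type within each of the two families of translations. However, the clean order mismatch (for instance, $\lambda_1$ has order $4$ while $\rho_1$ has order $6$) already signals at a glance why the cycle types cannot match, and the verification for the remaining eight rows and columns is entirely mechanical.
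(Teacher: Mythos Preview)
Your proposal is correct and follows essentially the same route as the paper: both invoke Lemma~\ref{Lm:2} to force $\alpha=\beta=\gamma$ (so that any isotopism onto $(Q,\cdot)^{(1,2)}$ is an antiautomorphism of $(Q,\cdot)$), and then both rule this out by a cycle-type mismatch between left and right translations. The paper phrases the final step slightly more tersely---noting only that every left translation contains a $4$-cycle while no right translation does---whereas you record the full cycle types $1+4+4$ versus $1+2+3+3$, but the content is identical.
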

\begin{proof}
Suppose for a contradiction that $(Q,\cdot)$ is isotopic to $(Q,\cdot)^{(1,2)}$. Then the quasigroups are isomorphic by Lemma \ref{Lm:2}. But that is not possible since all left translations of $(Q,\cdot)$ contain a $4$-cycle, while no right translation of $(Q,\cdot)$ contains a $4$-cycle. (We remark that the automorphism group of $(Q,\cdot)$ acts transitively on $Q$.)
\end{proof}

By inspection of the three tables $(Q,\cdot)$, $(Q,\rdiv)$ and $(Q,\ldiv)$, we observe:

\begin{lemma}\label{Lm:4}
Let $\mu_{(Q,\cdot)}$ be the minisquare structure invariant of $(Q,\cdot)$. Then the only directed edges of $\mu_{(Q,\cdot)}$ are $P_i\to B_i$ and $P_i\to S_i$, for $1\le i\le 3$.
\end{lemma}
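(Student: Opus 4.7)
The lemma is a direct verification claim about the specific quasigroup $(Q,\cdot)$, so the strategy is inspection organized to exploit symmetry. The minisquare graph has $54$ candidate directed edges, distributed over six orientation-types between bands, stacks and piles, and the task is to identify exactly those six listed in the conclusion. The first observation is that every candidate edge can be read off as a band-to-stack edge of one of the three displayed tables $(Q,\cdot)$, $(Q,\rdiv)$, $(Q,\ldiv)$ or of their transposes, since conjugacy permutes the roles of rows, columns and symbols: piles of $(Q,\cdot)$ appear as bands of $(Q,\rdiv)$ and as stacks of $(Q,\ldiv)$. So the entire check lives in the three printed tables.

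Next, I would invoke the transitive automorphism group of $(Q,\cdot)$ recorded in the parenthetical at the end of the proof of Lemma~\ref{Lm:2}. An automorphism is an isotopism of the form $(\alpha,\alpha,\alpha)$, hence a ds-isotopism by Lemma~\ref{Lm:2}, hence partition preserving. Combined with transitivity on the nine-element set $Q$, this forces the induced action on the three blocks to be transitive and therefore to contain a $3$-cycle that simultaneously rotates $B_1,B_2,B_3$, $S_1,S_2,S_3$, and $P_1,P_2,P_3$. This cuts the $54$ candidate edges into $18$ orbits of size $3$, reducing the work to $18$ representative checks.

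Then I would verify a single positive edge $P_1\to B_1$ and a single positive edge $P_1\to S_1$ by direct inspection of $(Q,\rdiv)$ and $(Q,\ldiv)$, respectively: in each case one checks that the two minisquares in the first band away from the target stack produce the same collection of three minicolumn symbol-sets. By the $\mathbb{Z}/3$-symmetry, the other four positive edges follow at once. For each of the remaining $16$ orbit representatives, I would exhibit two minicolumns of the pertinent minisquare pair whose symbol-sets disagree as collections.

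The main obstacle is purely the bookkeeping of these $16$ negative checks: there is no deeper structural reason the non-edges fail beyond the explicit entries of $Q$, so the argument is a finite combinatorial verification. With the three tables in view and the threefold symmetry reduction in hand, the verification is short and mechanical.
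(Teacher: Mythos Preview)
Your plan is essentially the paper's own approach: the paper records the lemma as a direct observation ``by inspection of the three tables $(Q,\cdot)$, $(Q,\rdiv)$ and $(Q,\ldiv)$'' with no further argument, and you have simply fleshed out how to organise that inspection, adding the pleasant $C_3$-symmetry reduction via the transitive automorphism group. That reduction is valid: an automorphism is a ds-isotopism with $\alpha=\beta=\gamma$, so the induced permutation of $\{X_1,X_2,X_3\}$ is the same for bands, stacks and piles, and transitivity on $Q$ forces transitivity on the three blocks, hence a simultaneous $3$-cycle on the $B_i$, $S_i$, $P_i$.

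One small bookkeeping slip to fix: you have the two division tables swapped when assigning the positive edges. In $(Q,\rdiv)=L^{(1,3)}$ the rows are the original symbols and the columns are unchanged, so a band-to-stack (minicolumn) check there detects $P\to S$ edges of $(Q,\cdot)$, not $P\to B$. Likewise in $(Q,\ldiv)=L^{(2,3)}$ the columns are the original symbols while the rows are unchanged, so a stack-to-band (minirow) check there detects $P\to B$ edges. Thus $P_1\to S_1$ is read from minicolumns of $(Q,\rdiv)$ and $P_1\to B_1$ from minirows of $(Q,\ldiv)$, the reverse of what you wrote. With that correction your verification plan goes through.
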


\begin{proposition}
The intercalate structure invariant and the minisquare structure invariant do not distinguish $(Q,\cdot)$ from $(Q,\cdot)^{(1,2)}$, $(Q,\rdiv)$ from $(Q,\ldiv)^{(1,2)}$, and $(Q,\ldiv)$ from $(Q,\rdiv)^{(1,2)}$.
\end{proposition}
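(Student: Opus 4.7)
The plan is to compute both invariants $\iota$ and $\mu$ for $(Q,\cdot)$ explicitly, and then to use the fact that conjugation of a latin square by $\theta\in\sym{3}$ permutes the three axis types (bands, stacks, piles) of its invariants by $\theta$. Once both invariants of $(Q,\cdot)$ are in hand, the invariants of all six conjugates follow by a formal axis relabeling, and the three pairings asserted in the proposition fall out.

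First I would pin down $\iota_{(Q,\cdot)}$. By \lref{Lm:1}, each of the $18$ intercalates $I$ has a unique diagonal corner $(x,x,x)$, say with $x\in X_i$, and its opposite corner lies in a diagonal minisquare $B_j\cap S_j$ with $j\ne i$. The rows of $I$ then span blocks $X_i$ and $X_j$, which forces $\mathrm{band}_{(Q,\cdot)}(I)=B_k$ with $k=\{1,2,3\}\setminus\{i,j\}$; by the same token $\mathrm{stack}_{(Q,\cdot)}(I)=S_k$. A short check in the multiplication table shows that the two values appearing in $I$ both lie in $X_i$, so $\mathrm{pile}_{(Q,\cdot)}(I)=P_i$. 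Therefore $\iota_{(Q,\cdot)}(B_a,S_b,P_c)=1$ if and only if $a=b$ and $a\ne c$. For $\mu_{(Q,\cdot)}$ I would just quote \lref{Lm:4}: its only edges are $P_i\to B_i$ and $P_i\to S_i$ for $i=1,2,3$.

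Next I would observe that both invariants of $(Q,\cdot)$ are symmetric under swapping the band and stack axes: the condition ``$a=b\ne c$'' is symmetric in $a$ and $b$, and the edge set $\{P_i\to B_i,\,P_i\to S_i\}_{i=1,2,3}$ is preserved by exchanging each $B_i$ with $S_i$. Since $(Q,\cdot)^{(1,2)}$ is $(Q,\cdot)$ with exactly those two axes swapped, this gives $\iota_{(Q,\cdot)}=\iota_{(Q,\cdot)^{(1,2)}}$ and $\mu_{(Q,\cdot)}=\mu_{(Q,\cdot)^{(1,2)}}$, settling the first pair.

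For the remaining two pairs I would use the general axis-relabeling rules
\[
\iota_{L^\theta}(c_1,c_2,c_3)=\iota_L\bigl(c_{\theta^{-1}(1)},c_{\theta^{-1}(2)},c_{\theta^{-1}(3)}\bigr),
\]
together with the fact that $\mu_{L^\theta}$ is obtained from $\mu_L$ by sending every axis-$u$ source or target to one of axis type $\theta^{-1}(u)$. Applying this to $(Q,\cdot)$ with $\theta=(1,3)$ (yielding $(Q,\rdiv)$) and with $\theta=(1,3,2)$ (yielding $(Q,\ldiv)^{(1,2)}$), both conjugates produce the condition $\iota=1$ iff $c_2=c_3\ne c_1$ and the same $\mu$-edges $B_i\to P_i$ and $B_i\to S_i$. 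The same bookkeeping with $\theta=(2,3)$ and $\theta=(1,2,3)$ pairs $(Q,\ldiv)$ with $(Q,\rdiv)^{(1,2)}$, now with $\iota=1$ iff $c_1=c_3\ne c_2$ and $\mu$-edges $S_i\to B_i$ and $S_i\to P_i$. I expect the trickiest step to be the value-block claim used in fixing $\iota_{(Q,\cdot)}$: \lref{Lm:1} locates the intercalates but says nothing about the blocks of their entries, so that piece has to be confirmed either by a short idempotency-based argument or by running through the $18$ intercalates listed against the multiplication table.
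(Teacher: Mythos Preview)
Your proposal is correct and follows essentially the same line as the paper: both invariants of $(Q,\cdot)$ are shown to be symmetric under swapping bands and stacks (via \lref{Lm:1} and \lref{Lm:4}), and the remaining two pairings then follow formally by conjugacy. One simplification worth noting: the paper does not compute the pile indices in $\iota_{(Q,\cdot)}$ at all, since the band--stack symmetry alone --- which follows directly from \lref{Lm:1} without knowing in which block the intercalate values lie --- already yields $\iota_{(Q,\cdot)}=\iota_{(Q,\cdot)^{(1,2)}}$, so the step you flag as ``trickiest'' can be bypassed entirely.
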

\begin{proof}
By Lemma \ref{Lm:1}, the intercalate structure invariants of $(Q,\cdot)$ and $(Q,\cdot)^{(1,2)}$ coincide. By Lemma \ref{Lm:4}, the minisquare structure invariant of $(Q,\cdot)$ distinguishes piles from bands and stacks, but not bands from stacks. The rest follows by considering the conjugates of $(Q,\cdot)$.
\end{proof}

It can now be checked (by hand) that $(Q,\cdot)$ is indeed ds-isotopic to $\DS(9,121)$, $(Q,\cdot)^{(1,2)}$ is ds-isotopic to $\DS(9,129)$, $(Q,\rdiv)$ is ds-isotopic to $\DS(9,186)$, $(Q,\rdiv)^{(1,2)}$ is ds-isotopic to $\DS(9,105)$, $(Q,\ldiv)$ is ds-isotopic to $\DS(9,91)$, and $(Q,\ldiv)^{(1,2)}$ is ds-isotopic to $\DS(9,160)$.

\section{Division sudokus with multiple sudoku partitions}\label{Sc:MultiplePartitions}

\subsection{Sudoku tri-partitions}

So far we have used the same sudoku partition for rows, columns and symbols. We will now find it convenient to use three separate partitions.

\begin{definition}
A \emph{sudoku tri-partition} is an ordered triple $(X^B,X^S,X^P)$ of sudoku partitions $\{X^B_1,\dots,X^B_m\}$, $\{X^S_1,\dots,X^S_m\}$ and $\{X^P_1,\dots,X^P_m\}$ on the same underlying set $X$. Given a latin square $L$ on $X$, the \emph{bands}, \emph{stacks} and \emph{piles} are defined by
\begin{align*}
    B_i &= \{(x,y,z)\in O(L):x\in X^B_i\},\\
    S_i &= \{(x,y,z)\in O(L):y\in X^S_i\},\\
    P_i &= \{(x,y,z)\in O(L):z\in X^P_i\},
\end{align*}
respectively.

A latin square $L$ on $X$ is a \emph{sudoku with respect to a sudoku tri-partition} if the intersection of every band with every stack contains all symbols of $X$. A latin square $L$ on $X$ is a \emph{division sudoku with respect to a sudoku tri-partition} if the intersection of every band with every stack contains all symbols of $X$, the intersection of every band with every pile contains all columns of $X$, and the intersection of every stack with every pile contains all rows of $X$.
\end{definition}

Results from Section \ref{Sc:Char} remain valid in this more general setting when suitably interpreted. For instance, a latin square $L$ is a division sudoku with respect to the sudoku tri-partition $(X^B,X^S,X^P)$ if and only if the conjugates $L$, $L^{(1,3)}$, $L^{(2,3)}$ are sudokus with respect to the sudoku tri-partition $(X^B,X^S,X^P)$.

For a latin square $L$ on $X$, denote by $\pi(L)$ the number of sudoku tri-partitions with respect to which $L$ is a division sudoku. In this subsection we will determine $\pi(L)$ for every division sudoku $L$ of rank $3$.

\begin{lemma}
If $L$, $L'$ are paratopic latin squares then $\pi(L)=\pi(L')$.
\end{lemma}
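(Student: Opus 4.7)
The plan is to show that $\pi$ is invariant under each of the two generating operations of paratopy, namely isotopy and conjugacy, and then compose.

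First I would record the symmetric viewpoint signalled by the remark just preceding the lemma: Proposition~\ref{Pr:DSChar} extends verbatim to tri-partitions, so being a division sudoku with respect to a triple $(X^{(1)},X^{(2)},X^{(3)})$ is equivalent to a condition that is fully symmetric in the three coordinates of $O(L)$. Concretely, for every pair of distinct coordinates $a\neq b$ in $\{1,2,3\}$ and every choice of blocks $X^{(a)}_i$, $X^{(b)}_j$, the slice
\[
    \{(x_1,x_2,x_3)\in O(L):x_a\in X^{(a)}_i,\; x_b\in X^{(b)}_j\}
\]
must project onto all of $X$ in the remaining coordinate. Once this is written down, both halves of the argument reduce to straightforward transport.

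For isotopy, if $(\alpha,\beta,\gamma)$ is an isotopism from $L$ onto $L'$, then $O(L')=\{(\alpha(x),\beta(y),\gamma(z)):(x,y,z)\in O(L)\}$. I would check that the map
\[
    (X^{(1)},X^{(2)},X^{(3)})\longmapsto (\alpha(X^{(1)}),\beta(X^{(2)}),\gamma(X^{(3)}))
\]
is a bijection on ordered triples of sudoku partitions of $X$ (here $\alpha(X^{(1)})$ denotes $\{\alpha(X^{(1)}_1),\dots,\alpha(X^{(1)}_m)\}$, which is again a sudoku partition since $\alpha$ is a bijection on $X$). Applying $(\alpha,\beta,\gamma)$ coordinate-wise sends bands, stacks and piles of $L$ to bands, stacks and piles of $L'$, so the division sudoku condition transfers under this bijection, yielding $\pi(L)=\pi(L')$ for isotopic $L$, $L'$.

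For conjugacy, given $\theta\in\sym{3}$, the array $O(L^\theta)$ is obtained by permuting the coordinates of $O(L)$ according to $\theta$. The symmetric reformulation from the second paragraph makes it immediate that
\[
    (X^{(1)},X^{(2)},X^{(3)})\longmapsto (X^{(\theta(1))},X^{(\theta(2))},X^{(\theta(3))})
\]
is a bijection between the tri-partitions counted by $\pi(L)$ and those counted by $\pi(L^\theta)$, so $\pi(L)=\pi(L^\theta)$. Since every paratopy from $L$ to $L'$ factors as a conjugacy followed by an isotopy, composing the two bijections gives $\pi(L)=\pi(L')$. No step poses a real obstacle; the only piece that needs genuine verification is the symmetric reformulation of the division sudoku property, which is essentially Proposition~\ref{Pr:DSChar} restated for tri-partitions.
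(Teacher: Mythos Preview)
Your proposal is correct and follows essentially the same approach as the paper: transport tri-partitions along an isotopism via the three coordinate bijections, and along a conjugacy by permuting the three partitions, then compose. The paper's own proof is just a two-sentence version of exactly this argument.
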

\begin{proof}
Suppose that $L$ is a division sudoku with respect to the sudoku tri-partition $(X^B,X^S,X^P)=(X^1,X^2,X^3)$. If $(\alpha_1,\alpha_2,\alpha_3)$ is an isotopism from $L$ to $L'$ then $L'$ is a division sudoku with respect to the sudoku tri-partition $(Y^1,Y^2,Y^3)$, where $Y^j_i = \alpha_j(X^j_i)$ for every $1\le i\le m$ and $1\le j\le 3$. Similarly, if $\theta\in\sym{3}$ and $L'=L^\theta$, then $L'$ is a division sudoku with respect to the sudoku tri-partition $(X^{\theta(1)},X^{\theta(2)},X^{\theta(3)})$.
\end{proof}

For instance, $\pi(\DS(9,18))=4$ and we can visualize the situation as follows:

\medskip
\begin{center}
\begin{tabular}{P{35mm}P{35mm}P{35mm}}
\oplabel{}
\rowlabels{1}{2}{3}{4}{5}{6}{7}{8}{9}
\collabels{1}{2}{3}{4}{5}{6}{7}{8}{9}
\smalllabeledsudoku{1&4&7&2&5&8&3&6&9}{8&2&5&9&3&6&7&1&4}{6&9&3&4&7&1&5&8&2}{2&5&8&3&6&9&1&4&7}{9&3&6&7&1&4&8&2&5}{4&7&1&5&8&2&6&9&3}{3&6&9&8&2&5&4&7&1}{7&1&4&6&9&3&2&5&8}{5&8&2&1&4&7&9&3&6}
&
\oplabel{}
\rowlabels{1}{2}{3}{4}{5}{6}{7}{8}{9}
\collabels{1}{4}{7}{2}{5}{8}{3}{6}{9}
\smalllabeledsudoku{1&2&3&4&5&6&7&8&9}{8&9&7&2&3&1&5&6&4}{6&4&5&9&7&8&3&1&2}{2&3&1&5&6&4&8&9&7}{9&7&8&3&1&2&6&4&5}{4&5&6&7&8&9&1&2&3}{3&8&4&6&2&7&9&5&1}{7&6&2&1&9&5&4&3&8}{5&1&9&8&4&3&2&7&6}
&
\oplabel{}
\rowlabels{1}{2}{3}{4}{5}{6}{7}{8}{9}
\collabels{1}{5}{9}{2}{6}{7}{3}{4}{8}
\smalllabeledsudoku{1&5&9&4&8&3&7&2&6}{8&3&4&2&6&7&5&9&1}{6&7&2&9&1&5&3&4&8}{2&6&7&5&9&1&8&3&4}{9&1&5&3&4&8&6&7&2}{4&8&3&7&2&6&1&5&9}{3&2&1&6&5&4&9&8&7}{7&9&8&1&3&2&4&6&5}{5&4&6&8&7&9&2&1&3}
\end{tabular}
\end{center}
\medskip

Here, all three latin squares represent $L=\DS(9,18)$ with the indicated labeling of rows and columns. The first square shows that $L$ is a division sudoku with respect to the tri-partition
\begin{displaymath}
   X^B=\{123\ 456\ 789\},\ X^S=\{123\ 456\ 789\},\ X^P=\{123\ 456\ 789\}
\end{displaymath}
but also with respect to the tri-partition
\begin{displaymath}
    X^B=\{123\ 456\ 789\},\ X^S=\{123\ 456\ 789\},\ X^P=\{159\ 267\ 348\}.
\end{displaymath}
(The first partition $X^P=\{123\ 456\ 789\}$ can be seen on the broken diagonals of the minisquare $M=B_1\cap S_1$ and then in all minisquares, while the second partition $X^P=\{159\ 267\ 348\}$ can be seen on the broken antidiagonals of the minisquare $M$ and then in all minisquares. It is not always the case that for a given $X^B$, $X^S$ there are two suitable $X^P$s obtained in this fashion.)
The second square shows that $L$ is a division sudoku with respect to the tri-partition
\begin{displaymath}
    X^B=\{123\ 456\ 789\},\ X^S=\{147\ 258\ 369\},\ X^P=\{147\ 258\ 369\}.
\end{displaymath}
Finally, the third square shows that $L$ is a division sudoku with respect to the tri-partition
\begin{displaymath}
    X^B=\{123\ 456\ 789\},\ X^S=\{159\ 267\ 348\},\ X^P=\{147\ 258\ 369\}.
\end{displaymath}

\begin{proposition}[\Keyboard]\label{Pr:Tripartitions}
Among the $45$ main ds-classes of division sudokus of rank $3$ (cf. Table \ref{Tb:MainDSClasses}), only $7$ classes consist of latin squares that are division sudokus with respect to more than one sudoku tri-partition, namely:
\begin{displaymath}
\begin{array}{c|cccccccccccccccccc}
    i               &2  &17 &18 &20 &27 &175    &179\\
    \pi(\DS(9,i))    &2  &24 &4  &9  &2  &3      &4
\end{array}
\end{displaymath}
\end{proposition}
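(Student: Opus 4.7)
The plan is to verify the proposition by exhaustive computation, justified by the paratopy-invariance lemma proved just above. That lemma guarantees $\pi$ is constant on each main ds-class, so it suffices to compute $\pi(L)$ for a single representative $L$ of each of the $45$ classes listed in Table \ref{Tb:MainDSClasses}; for convenience, I take the representative $\DS(9,i)$ with the smallest index $i$ in the class.

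For a fixed latin square $L$ on a nine-element set $X$, unpacking the definition of a division sudoku with respect to a tri-partition $(X^B, X^S, X^P)$ yields three conditions, each involving only two of the three partitions: every intersection of a band and a stack contains all symbols (depending on $X^B$ and $X^S$); every intersection of a band and a pile contains all columns (depending on $X^B$ and $X^P$); every intersection of a stack and a pile contains all rows (depending on $X^S$ and $X^P$). Since the number of sudoku partitions of $X$ is $\binom{9}{3,3,3}/3! = 280$, this decomposition is the key to keeping the search tractable.

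Accordingly, for each representative $L$ I enumerate and index the $280$ sudoku partitions of $X$ and precompute three Boolean $280 \times 280$ tables $T_{BS}$, $T_{BP}$, $T_{SP}$ recording which ordered pairs of partitions satisfy the respective condition. Then
\[
    \pi(L) = \#\bigl\{(X^B, X^S, X^P) : T_{BS}(X^B, X^S) \wedge T_{BP}(X^B, X^P) \wedge T_{SP}(X^S, X^P)\bigr\},
\]
which can be evaluated efficiently by iterating over the (typically sparse) support of $T_{BS}$ and, for each surviving pair, looping over $X^P$ and consulting the remaining two tables. Tabulating the results over the $45$ representatives yields the assertion.

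The main obstacle is computational bookkeeping rather than any mathematical subtlety. One must fix a consistent labelling of the $280$ sudoku partitions of $X$ so that the three tables can be correctly combined, and must verify the paratopy-invariance step carefully enough that a single representative per main ds-class truly captures $\pi$. Once the tables are in place the full computation finishes in seconds in \texttt{GAP}. As sanity checks, the value $\pi(\DS(9,18)) = 4$ has already been exhibited by hand in the example preceding the statement, and the value $\pi(\DS(9,17)) = 24$ should agree with the explicit construction of a division sudoku with $24$ sudoku tri-partitions promised in the introduction.
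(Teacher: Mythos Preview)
Your approach is correct and, like the paper's, is an exhaustive computer verification justified by the paratopy-invariance of $\pi$. The one notable difference is in how the search over $X^P$ is organized. You loop over all $280$ candidate pile partitions for each surviving pair $(X^B,X^S)$, consulting precomputed tables $T_{BP}$ and $T_{SP}$. The paper instead observes that once $(X^B,X^S)$ satisfies the sudoku condition, condition (iv) of Proposition~\ref{Pr:DSChar} forces each pile block to meet every minirow and minicolumn of the minisquare $M=X^B_1\times X^S_1$ exactly once; since a $3\times 3$ grid admits exactly two unordered partitions into three cell-transversals (the broken diagonals and the broken antidiagonals), there are only \emph{two} candidates for $X^P$ to test. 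This structural shortcut makes the paper's search tighter, but your table-driven method is perfectly valid and still finishes in seconds, so the difference is one of algorithmic elegance rather than substance.
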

\begin{proof}
Given a latin square $L$ of order $9$, all sudoku tri-partitions with respect to which $L$ is a division sudoku can be found quickly as follows. For a given set $B$ of $3$ rows, let $\mathcal S(B)$ be the set of all subsets $S$ of columns such that $B\times S$ is a $3\times 3$ latin subsquare of $L$. There are $\binom{8}{2}\binom{5}{2}=280$ partitions of rows into $3$-element subsets. Given a partition $(X^B_1,X^B_2,X^B_3)$ of rows, we consider all partitions of columns $(X^S_1,X^S_2,X^S_3)$ such that $X^S_i \in \mathcal S(X^B_i)$. Once $(X^B_1,X^B_2,X^B_3)$ and $(X^S_1,X^S_2,X^S_3)$ are given, there are only two candidates for $(X^P_1,X^P_2,X^P_3)$, one corresponding to the broken diagonals of the minisquare $M=X^B_1\times X^S_1$, and another corresponding to the broken antidiagonals of $M$.
\end{proof}

\begin{table}[ht]
\caption{All sudoku tri-partitions with respect to which a given latin square is a division sudoku.}\label{Tb:Tripartitions}
\begin{scriptsize}
\begin{center}
\begin{tabular}{lll}
DS(9,2)&&                                                       DS(9,18)\\
(\,\{123 456 789\},\,\{123 456 789\},\,\{123 456 789\}\,)&&     (\,\{123 456 789\},\,\{123 456 789\},\,\{123 456 789\}\,)\\
(\,\{123 456 789\},\,\{123 456 789\},\,\{159 267 348\}\,)&&     (\,\{123 456 789\},\,\{123 456 789\},\,\{159 267 348\}\,)\\
                                                         &&     (\,\{123 456 789\},\,\{147 258 369\},\,\{147 258 369\}\,)\\
DS(9,17)&&                                                      (\,\{123 456 789\},\,\{159 267 348\},\,\{147 258 369\}\,)\\
(\,\{123 456 789\},\,\{123 456 789\},\,\{123 456 789\}\,)&&     \\
(\,\{123 456 789\},\,\{123 456 789\},\,\{159 267 348\}\,)&&     DS(9,20)\\
(\,\{123 456 789\},\,\{147 258 369\},\,\{159 267 348\}\,)&&     (\,\{123 456 789\},\,\{123 456 789\},\,\{123 456 789\}\,)\\
(\,\{123 456 789\},\,\{147 258 369\},\,\{147 258 369\}\,)&&     (\,\{123 456 789\},\,\{125 389 467\},\,\{127 346 589\}\,)\\
(\,\{123 456 789\},\,\{159 267 348\},\,\{123 456 789\}\,)&&     (\,\{123 456 789\},\,\{127 346 589\},\,\{179 236 458\}\,)\\
(\,\{123 456 789\},\,\{159 267 348\},\,\{147 258 369\}\,)&&     (\,\{123 456 789\},\,\{134 278 569\},\,\{139 256 478\}\,)\\
(\,\{147 258 369\},\,\{123 456 789\},\,\{159 267 348\}\,)&&     (\,\{123 456 789\},\,\{139 256 478\},\,\{125 389 467\}\,)\\
(\,\{147 258 369\},\,\{123 456 789\},\,\{168 249 357\}\,)&&     (\,\{123 456 789\},\,\{145 238 679\},\,\{134 278 569\}\,)\\
(\,\{147 258 369\},\,\{159 267 348\},\,\{168 249 357\}\,)&&     (\,\{123 456 789\},\,\{147 258 369\},\,\{159 267 348\}\,)\\
(\,\{147 258 369\},\,\{159 267 348\},\,\{147 258 369\}\,)&&     (\,\{123 456 789\},\,\{159 267 348\},\,\{147 258 369\}\,)\\
(\,\{147 258 369\},\,\{168 249 357\},\,\{159 267 348\}\,)&&     (\,\{123 456 789\},\,\{179 236 458\},\,\{145 238 679\}\,)\\
(\,\{147 258 369\},\,\{168 249 357\},\,\{147 258 369\}\,)&&     \\
(\,\{159 267 348\},\,\{123 456 789\},\,\{123 456 789\}\,)&&     DS(9,27)\\
(\,\{159 267 348\},\,\{123 456 789\},\,\{168 249 357\}\,)&&     (\,\{123 456 789\},\,\{123 456 789\},\,\{123 456 789\}\,)\\
(\,\{159 267 348\},\,\{147 258 369\},\,\{147 258 369\}\,)&&     (\,\{123 456 789\},\,\{147 258 369\},\,\{147 258 369\}\,)\\
(\,\{159 267 348\},\,\{147 258 369\},\,\{168 249 357\}\,)&&     \\
(\,\{159 267 348\},\,\{168 249 357\},\,\{147 258 369\}\,)&&     DS(9,175)\\
(\,\{159 267 348\},\,\{168 249 357\},\,\{123 456 789\}\,)&&     (\,\{123 456 789\},\,\{123 456 789\},\,\{123 456 789\}\,)\\
(\,\{168 249 357\},\,\{147 258 369\},\,\{159 267 348\}\,)&&     (\,\{147 258 369\},\,\{169 247 358\},\,\{148 259 367\}\,)\\
(\,\{168 249 357\},\,\{147 258 369\},\,\{168 249 357\}\,)&&     (\,\{168 249 357\},\,\{148 259 367\},\,\{169 247 358\}\,)\\
(\,\{168 249 357\},\,\{159 267 348\},\,\{168 249 357\}\,)&&     \\
(\,\{168 249 357\},\,\{159 267 348\},\,\{123 456 789\}\,)&&     DS(9,179)\\
(\,\{168 249 357\},\,\{168 249 357\},\,\{123 456 789\}\,)&&     (\,\{123 456 789\},\,\{123 456 789\},\,\{123 456 789\}\,)\\
(\,\{168 249 357\},\,\{168 249 357\},\,\{159 267 348\}\,)&&     (\,\{149 257 368\},\,\{167 248 359\},\,\{158 269 347\}\,)\\
                                                         &&     (\,\{158 269 347\},\,\{158 269 347\},\,\{167 248 359\}\,)\\
                                                         &&     (\,\{167 248 359\},\,\{149 257 368\},\,\{149 257 368\}\,)\\

\end{tabular}
\end{center}
\end{scriptsize}
\end{table}

Table \ref{Tb:Tripartitions} lists all sudoku tri-partitions with respect to which the latin squares of Proposition \ref{Pr:Tripartitions} are division sudokus.

\subsection{Synchronizing tri-partitions}

A sudoku tri-partition $(X^B,X^S,X^P)$ is \emph{synchronized} if $X^B=X^S=X^P$. Of course, a synchronized sudoku tri-partition can be identified with a sudoku partition.

For a latin square $L$ denote by $\sigma(L)$ the number of synchronized sudoku tri-partitions with respect to which $L$ is a division sudoku. In other words, $\sigma(L)$ is the number of sudoku partitions with respect to which $L$ is a division sudoku.

The value of $\sigma$ is preserved under conjugation but not necessarily under ds-isotopisms. For a ds-isotopism class $\mathcal C$ with respect to the standard sudoku partition, let $\sigma(\mathcal C)=\max\{\sigma(L):L\in \mathcal C\}$. Our task is to determine $\sigma(\mathcal C)$ for every ds-isotopism class $\mathcal C$ and to find at least one $L\in\mathcal C$ such that $\sigma(L)=\sigma(\mathcal C)$.

If $L=\DS(9,i)\in\mathcal C$ for some $1\le i\le 186$, then $\sigma(L)\ge 1$ on account of the standard sudoku partition of $L$. It can certainly happen that $\sigma(L)<\pi(L)$ and, in fact, $\sigma(\mathcal C)<\pi(L)$.

\begin{proposition}[\Keyboard]\label{Pr:MaxSynchro}
For $1\le i\le 186$, let $\mathcal C_i$ be the ds-isotopism class of the division sudoku $\DS(9,i)$. Then $\sigma(\mathcal C_i)=1$ except in the cases $\sigma(\mathcal C_{17})=4$, $\sigma(\mathcal C_{175})=3$ and $\sigma(\mathcal C_{179})=4$.
\end{proposition}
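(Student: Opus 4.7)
The plan is to combine the reduction from \pref{Pr:Tripartitions} with an upper bound coming from the partitions that appear as coordinates of tri-partitions, and then to verify matching lower bounds for the three genuinely exceptional classes by explicit construction. Since $\sigma(L) \leq \pi(L)$ trivially, and $\pi$ is invariant under ds-isotopism, we have $\sigma(\mathcal{C}_i) \leq \pi(\DS(9,i))$. \pref{Pr:Tripartitions} gives $\pi(\DS(9,i)) = 1$ unless $i \in \{2, 17, 18, 20, 27, 175, 179\}$, which together with the obvious lower bound $\sigma(\mathcal{C}_i) \geq 1$ from the standard partition resolves $179$ of the $186$ classes.

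For the remaining seven cases I would rely on the following upper bound. Suppose $L'$ is ds-isotopic to $L = \DS(9,i)$ via $(\alpha, \beta, \gamma)$ and $L'$ has $k$ synchronized tri-partitions $(Y_1, Y_1, Y_1), \ldots, (Y_k, Y_k, Y_k)$ with pairwise distinct $Y_j$. Pulling back produces $k$ distinct tri-partitions $(\alpha^{-1}(Y_j), \beta^{-1}(Y_j), \gamma^{-1}(Y_j))$ of $L$ whose first coordinates are pairwise distinct, since $\alpha^{-1}$ is a bijection. Hence $k$ is at most the number of distinct partitions occurring in the first coordinate of any tri-partition of $L$, and analogously for the second and third coordinates. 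Inspection of Table \ref{Tb:Tripartitions} then yields: for $i \in \{2, 18, 20, 27\}$ every tri-partition has the standard partition $P_1$ in the first coordinate, so $\sigma(\mathcal{C}_i) \leq 1$; for $i = 17$ each coordinate ranges over all four parallel classes $P_1, P_2, P_3, P_4$ of the affine plane of order $3$, giving $\sigma(\mathcal{C}_{17}) \leq 4$; for $i = 175$ each coordinate takes three distinct values, giving $\sigma(\mathcal{C}_{175}) \leq 3$; for $i = 179$ each coordinate takes four distinct values, giving $\sigma(\mathcal{C}_{179}) \leq 4$.

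To establish matching lower bounds for $i \in \{17, 175, 179\}$, I would exhibit explicit ds-isotopisms synchronizing the claimed number of tri-partitions. For $\DS(9, 17)$ the four tri-partitions $(P_1, P_1, P_1)$, $(P_2, P_3, P_4)$, $(P_3, P_4, P_2)$, $(P_4, P_2, P_3)$ all appear in Table \ref{Tb:Tripartitions}, and a suitable partition-preserving triple $(\alpha, \beta, \gamma)$ (with $\alpha$ fixing each $P_j$ setwise, and $\beta$, $\gamma$ inducing appropriate $3$-cycles on $\{P_2, P_3, P_4\}$ at the partition level) sends all four to the diagonal form $(P_j, P_j, P_j)$. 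Analogous triples handle the three tri-partitions of $\DS(9, 175)$ and the four tri-partitions of $\DS(9, 179)$.

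The main obstacle is realizability: the prescribed action on partitions must lift to actual standard-preserving permutations $\alpha, \beta, \gamma$ of $\{1, \ldots, 9\}$, and not every permutation of the set of sudoku partitions arises from a single element of $\sym{9}$. This is a finite check inside the order-$6^4 = 1296$ group of standard-preserving permutations of $X$, easily discharged by brute-force enumeration of candidate triples and direct counting of the resulting synchronized tri-partitions, which is consistent with the \Keyboard labeling and confirms the reported maxima $4$, $3$, $4$.
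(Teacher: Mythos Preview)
Your argument is correct, and the upper-bound half is genuinely cleaner than what the paper does. The paper proves \pref{Pr:MaxSynchro} by a single backtracking algorithm: starting from the standard synchronized tri-partition $X$, it tries to synchronize a second tri-partition $Y$ by checking whether $Y^B,Y^S,Y^P$ lie in one orbit of the stabilizer of $X^B$, then recurses with the joint stabilizer. This simultaneously establishes upper and lower bounds but leaves the upper bound implicit in the exhaustiveness of the search. Your observation---that pulling $k$ synchronized tri-partitions of $L'$ back along a ds-isotopism produces $k$ tri-partitions of $L$ with pairwise distinct first coordinates, so $\sigma(\mathcal C_i)$ is bounded by the number of distinct band-partitions in Table~\ref{Tb:Tripartitions}---dispatches the four classes $\mathcal C_2,\mathcal C_{18},\mathcal C_{20},\mathcal C_{27}$ without any search, and gives sharp a priori ceilings of $4,3,4$ for $\mathcal C_{17},\mathcal C_{175},\mathcal C_{179}$. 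For the matching lower bounds both approaches are computational; the paper in fact exhibits the witnesses $L_{17},L_{175},L_{179}$ explicitly just after the proof, which is exactly what your final paragraph promises to do by enumeration inside the order-$1296$ group. So the content is the same, but your decomposition into a structural upper bound plus a short constructive search is more transparent than the paper's unified backtrack.
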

\begin{proof}
Let $L$ be a division sudoku with sudoku tri-partitions $X=(X^B,X^S,X^P)$, $Y=(Y^B,Y^S,Y^P)$, $Z=(Z^B,Z^S,Z^P)$, etc. Suppose without loss of generality that $X^B=X^S=X^P$ is the standard sudoku partition.

Suppose that we wish to find a ds-isotopic copy $L'$ of $L$ for which $X$ is still a (synchronized) sudoku tri-partition and the image of $Y$ is also synchronized. Let $G$ be the subgroup of $\mathrm{Sym}(9)$ that preserves the partition $X^B$. This group acts naturally on the $280$-element set of sudoku partitions, and $G\times G\times G$ acts naturally on the set of sudoku tri-partitions with $X$ a fixed point. Our task is to find $f$, $g$, $h\in G$ such that $(Y^B)^f = (Y^S)^g = (Y^P)^h$, which happens if and only if $Y^B=(Y^S)^{gf^{-1}} = (Y^P)^{hf^{-1}}$. It is therefore possible to synchronize $Y$ while stabilizing $X$ if and only if $Y^B$, $Y^S$, $Y^P$ are in the same orbit of $G$.

If $Y$ cannot be synchronized while stabilizing $X$, we backtrack. Otherwise we find $g$, $h\in G$ such that $Y^B = (Y^S)^g= (Y^P)^h$ and we let $L'$ be the uniquely determined latin square for which $(1,g,h)$ is a ds-isotopism from $L$ to $L'$. Then $L'$ is a division sudoku with respect to the tri-partitions $X'=X$, $Y'=(Y^B,Y^B,Y^B)$, $Z'=(Z^B,(Z^S)^g,(Z^P)^h)$, etc, and both $X'$, $Y'$ are synchronized. Attempting to synchronize $Z'$, we replace the group $G$ with the stabilizer of both $X^B$ and $Y^B$, and we proceed as above.
\end{proof}

The algorithm described in the proof of Proposition \ref{Pr:MaxSynchro} determines $\sigma(\mathcal C_i)$ and $L_i\in \mathcal C_i$ with $\sigma(L_i)=\sigma(\mathcal C_i)$ in a matter of seconds. In particular, it finds the latin squares

\medskip
\begin{center}
\begin{tabular}{P{35mm}P{35mm}P{35mm}}
\settablename{\normalsize{$L_{17}$}}
\namedsmallsudoku{5&2&8&7&4&1&3&9&6}{9&6&3&2&8&5&4&1&7}{1&7&4&6&3&9&8&5&2}{6&3&9&8&5&2&1&7&4}{7&4&1&3&9&6&5&2&8}{2&8&5&4&1&7&9&6&3}{4&1&7&9&6&3&2&8&5}{8&5&2&1&7&4&6&3&9}{3&9&6&5&2&8&7&4&1}
&
\settablename{\normalsize{$L_{175}$}}
\namedsmallsudoku{2&5&8&4&9&1&3&7&6}{9&3&6&2&5&7&4&1&8}{4&7&1&8&3&6&9&5&2}{6&1&9&5&8&2&7&3&4}{7&4&2&3&6&9&5&8&1}{3&8&5&7&1&4&2&6&9}{1&6&7&9&4&3&8&2&5}{8&2&4&1&7&5&6&9&3}{5&9&3&6&2&8&1&4&7}
&
\settablename{\normalsize{$L_{179}$}}
\namedsmallsudoku{7&1&4&9&5&3&8&2&6}{5&8&2&1&7&6&4&9&3}{3&6&9&4&2&8&1&5&7}{8&3&5&2&4&9&7&6&1}{6&9&1&7&3&5&2&8&4}{2&4&7&6&8&1&5&3&9}{9&5&3&8&1&4&6&7&2}{1&7&6&5&9&2&3&4&8}{4&2&8&3&6&7&9&1&5}
\end{tabular}
\end{center}
\medskip

The square $L_{17}\in\mathcal C_{17}$ is a division sudoku with respect to each of the following four sudoku partitions
\begin{displaymath}
\{123\ 456\ 789\},\ \{147\ 258\ 369\},\ \{159\ 267\ 348\},\ \{168\ 249\ 357\}.
\end{displaymath}
The square $L_{175}\in \mathcal C_{175}$ is a division sudoku with respect to each of the following three sudoku partitions
\begin{displaymath}
\{123\ 456\ 789\},\ \{147\ 258\ 369\},\ \{168\ 249\ 357\}.
\end{displaymath}
The square $L_{179}\in\mathcal C_{179}$ is a division sudoku with respect to each of the following four sudoku partitions
\begin{displaymath}
\{123\ 456\ 789\},\ \{149\ 257\ 368\},\ \{158\ 269\ 347\},\ \{167\ 248\ 359\}.
\end{displaymath}

\begin{remark}
The division sudoku $L_0$ from the introduction (which is ds-isotopic to $\DS(9,179)$) is a division sudoku with respect to exactly the same sudoku partitions as $L_{17}$. Upon considering the ds-isotopism $((7,8,9),(7,8,9),(7,8,9))$, we can transform $L_{179}$ into a square $L'_{179}$ which also has exactly the same sudoku partitions as $L_{17}$.
\end{remark}

For $m\ge 3$, let
\begin{displaymath}
    \sigma(m) = \max\{\sigma(L):\text{$L$ is a division sudoku of rank $m$}\}
\end{displaymath}
be the maximum number of division sudoku partitions that a division sudoku of rank $m$ can possess.

\begin{corollary}
$\sigma(3)=4$.
\end{corollary}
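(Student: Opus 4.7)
The plan is to deduce the corollary directly from \pref{Pr:MaxSynchro} after a routine relabeling argument that reduces every division sudoku of rank $3$ to a standard one.

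For the lower bound $\sigma(3)\ge 4$, I would simply point to the latin square $L_{17}$ exhibited right after the proof of \pref{Pr:MaxSynchro}: it is explicitly verified to be a division sudoku with respect to the four sudoku partitions $\{123\ 456\ 789\}$, $\{147\ 258\ 369\}$, $\{159\ 267\ 348\}$, $\{168\ 249\ 357\}$, so $\sigma(L_{17})\ge 4$ and hence $\sigma(3)\ge 4$. (Equivalently, one can use $L_{179}$.)

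For the upper bound $\sigma(3)\le 4$, let $L$ be any division sudoku of rank $3$ on an underlying set $X$ of size $9$, with some sudoku partition $\{X_1,X_2,X_3\}$. Choose a bijection $\phi\colon X\to\{1,\dots,9\}$ that sends $X_i$ to $\{3i-2,3i-1,3i\}$ for $i=1,2,3$, and let $L'$ be the relabeled latin square. Then $L'$ is a standard division sudoku of rank $3$; moreover, $\phi$ induces a bijection between sudoku partitions of $X$ and sudoku partitions of $\{1,\dots,9\}$, and this bijection identifies the partitions with respect to which $L$ is a division sudoku with the partitions with respect to which $L'$ is. Hence $\sigma(L)=\sigma(L')$.

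Now $L'$ lies in exactly one of the ds-isotopism classes $\mathcal C_1,\dots,\mathcal C_{186}$, so $\sigma(L')\le \sigma(\mathcal C_i)$ for the appropriate $i$. By \pref{Pr:MaxSynchro}, $\sigma(\mathcal C_i)\in\{1,3,4\}$ for every $i$, with maximum $4$. Therefore $\sigma(L)=\sigma(L')\le 4$, and taking the supremum over all rank-$3$ division sudokus yields $\sigma(3)\le 4$. Combined with the lower bound, $\sigma(3)=4$.

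The only subtle point is the reduction step, and in particular the observation that $\sigma$ is preserved under relabeling of the underlying set (it is \emph{not} in general preserved under arbitrary ds-isotopisms, which is exactly why \pref{Pr:MaxSynchro} is phrased in terms of $\sigma(\mathcal C_i)=\max$ over a class); no genuine obstacle arises since everything has been set up by the preceding proposition.
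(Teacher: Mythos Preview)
Your argument is correct and matches the paper's intended reasoning: the paper states the corollary without proof, immediately after \pref{Pr:MaxSynchro} and the explicit examples $L_{17}$, $L_{175}$, $L_{179}$, so you have simply spelled out the obvious details. Your care about the relabeling step---using an isotopism of the form $(\phi,\phi,\phi)$ so that synchronized tri-partitions go to synchronized tri-partitions, and hence $\sigma$ is preserved---is exactly the point that makes the reduction to standard division sudokus legitimate.
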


\section{Division sudokus constructed from fields and nearfields}\label{Sc:Nearfields}

In this section we construct division sudokus of prime power rank that are rich in division sudoku partitions.

\subsection{Division sudokus constructed from fields}

For a prime power $q$, let $\F_q$ be the field of order $q$. Given $c\in\F_{q^2}\setminus\{0,1\}$, define a multiplication $*$ on $\F_{q^2}$ by
\begin{equation}\label{Eq:Stein}
    x*y = x+(y-x)c.
\end{equation}
It is then easy to see that $(\F_{q^2},*)$ is a quasigroup with left and right divisions given by
\begin{displaymath}
    x\ldiv y = x + (y-x)c^{-1},\quad x\rdiv y = x + (y-x)c(c-1)^{-1} = y + (x-y)(1-c)^{-1}.
\end{displaymath}
respectively.

\begin{definition}
Let $q$ be a prime power and $c\in \F_{q^2}\setminus \{0,1\}$. Then the quasigroup $(\F_{q^2},*)$ with multiplication \eqref{Eq:Stein} will be denoted by $\DS(\F_{q^2},c)$.
\end{definition}

We will show that $\DS(\F_{q^2},c)$ can be equipped with a number of sudoku partitions with respect to which it is a division sudoku.

\begin{lemma}\label{Lm:w1}
Let $V$ be a field seen as a vector space over some subfield $F$. Let $W$ be an $F$-subspace of $V$ and $c\in V\setminus\{0,1\}$. Then the following conditions are equivalent: $W\cap Wc = 0$, $W\cap W(c-1) = 0$ and $Wc \cap W(c-1) = 0$.
\end{lemma}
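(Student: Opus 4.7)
The plan is to reduce all three statements to a single dimension count. Since $V$ is a field, multiplication by any nonzero element is an $F$-linear bijection $V\to V$; in particular, $Wc$ and $W(c-1)$ are $F$-subspaces of $V$ with $\dim_F Wc=\dim_F W(c-1)=\dim_F W$ (the hypothesis $c\ne 0,1$ ensures both multipliers are nonzero).

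The key step is to check that the three pairwise sums coincide. From the identities
\begin{align*}
    w(c-1) &= wc - w,\\
    wc &= w + w(c-1),\\
    w &= wc - w(c-1),
\end{align*}
it follows that $W(c-1)\subseteq W+Wc$, $Wc\subseteq W+W(c-1)$ and $W\subseteq Wc+W(c-1)$. Together with the trivial containments $W,Wc,W(c-1)\subseteq W+Wc+W(c-1)$, this yields
\begin{displaymath}
    W+Wc \;=\; W+W(c-1) \;=\; Wc+W(c-1) \;=:\; T.
\end{displaymath}

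Now I apply the dimension formula $\dim(U_1+U_2)+\dim(U_1\cap U_2)=\dim U_1+\dim U_2$ to each of the three sums. Since all three sums equal $T$ and all three summands have dimension $\dim_F W$, I obtain
\begin{displaymath}
    \dim_F(W\cap Wc) \;=\; \dim_F(W\cap W(c-1)) \;=\; \dim_F(Wc\cap W(c-1)) \;=\; 2\dim_F W - \dim_F T.
\end{displaymath}
Hence one of the three intersections is trivial if and only if all of them are, proving the lemma.

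The only potential obstacle is whether the dimension formula applies, i.e.\ whether $W$ is finite-dimensional over $F$. In the paper's intended application $V=\F_{q^2}$ and $F=\F_q$, so $\dim_F V=2$ and finite-dimensionality is automatic. Should one wish to avoid the hypothesis altogether, each pairwise implication admits a direct element-wise verification; for instance, if $W\cap Wc=0$ and $w_1=w_2(c-1)$ with $w_1,w_2\in W$, then $w_2c=w_1+w_2\in W\cap Wc=\{0\}$, which forces $w_2=0$ (as $c\ne 0$) and hence $w_1=0$. The other five implications follow by analogous one-line manipulations.
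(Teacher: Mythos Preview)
Your argument is correct. The dimension-counting approach is a genuinely different route from the paper's proof, which proceeds by a direct two-line elementwise manipulation: the paper simply observes that for $w,w'\in W$ one has $w'=wc$ iff $w'-w=w(c-1)$, and $w'(c-1)=wc$ iff $w'=(w'-w)c$, from which the three equivalences follow immediately. Your approach is more structural---showing that the three pairwise sums $W+Wc$, $W+W(c-1)$, $Wc+W(c-1)$ coincide and then invoking the dimension formula---which is conceptually pleasing and explains \emph{why} the three conditions are linked, but it needs $W$ to be finite-dimensional over $F$. The paper's argument works in full generality without any dimension hypothesis and is shorter. You correctly flag this gap and, in your final paragraph, essentially recover the paper's elementwise proof as the remedy; that last paragraph is in fact all the paper uses.
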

\begin{proof}
Let $w$, $w'$ be elements of $W$. Note that $w' = wc$ if and only if $w'-w = w(c-1)$, and $w'(c-1) = wc$ if and only if $w' = (w'-w)c$.
\end{proof}

\begin{proposition}\label{Pr:w2}
Let $r$ be a prime power, $q=r^s$, $V=\F_{q^2}$ a vector space over $\F_r$ and $W$ an $\F_r$-subspace of $V$ of dimension $s$. Let $c\in V$ be such that $W\cap Wc=0$. For $x$, $y \in V$ write $x\sim y$ if and only if $x-y \in W$. Then $\DS(\F_{q^2},c)$ is a division sudoku of rank $q$ with respect to the partition induced by the equivalence relation $\sim$.
\end{proposition}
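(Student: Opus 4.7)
The plan is to apply Proposition~\ref{Pr:DSPerms}, which reduces the division sudoku condition to checking that for every $x\in V$ the three translations $L_x$, $R_x$ and $D_x$ of the quasigroup shred the equivalence $\sim$.

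First I would confirm that $\sim$ really gives a sudoku partition of the appropriate rank. Since $W$ is an $\F_r$-subspace of dimension $s$ of the $2s$-dimensional $\F_r$-space $V=\F_{q^2}$, the cosets of $W$ form a partition of $V$ into $r^{2s}/r^s = r^s = q$ blocks of size $q$, which is a sudoku partition for a ground set of size $q^2$.

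Next I would compute the three translations explicitly from \eqref{Eq:Stein}. Because $x*y = x(1-c)+yc$, we have $L_x(y) = x(1-c) + yc$ and $R_x(y) = y(1-c) + xc$, while the formula $x\rdiv y = y + (x-y)(1-c)^{-1}$ gives $D_x(y) = y\bigl(1-(1-c)^{-1}\bigr) + x(1-c)^{-1} = y\,c(c-1)^{-1} + x(1-c)^{-1}$. Consequently, for any $u,v\in V$,
\begin{displaymath}
    L_x(u)-L_x(v) = (u-v)c,\quad R_x(u)-R_x(v) = (u-v)(1-c),\quad D_x(u)-D_x(v) = (u-v)\,c(c-1)^{-1}.
\end{displaymath}
In particular, these differences depend only on $u-v$ and not on $x$.

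Now I would unpack what shredding means: $L_x$ shreds $\sim$ iff no nonzero $w\in W$ has $wc\in W$, i.e.\ iff $W\cap Wc^{-1}=0$, which (multiplying through by $c$) is the same as $W\cap Wc=0$. Similarly $R_x$ shreds $\sim$ iff $W\cap W(1-c)=0$, which up to sign is $W\cap W(c-1)=0$. Finally $D_x$ shreds $\sim$ iff $W\cap W\bigl(c(c-1)^{-1}\bigr)=0$, which after multiplying by $c-1$ becomes $W(c-1)\cap Wc=0$. By Lemma~\ref{Lm:w1}, all three conditions $W\cap Wc=0$, $W\cap W(c-1)=0$ and $Wc\cap W(c-1)=0$ are equivalent, and the first is our hypothesis. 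Hence $L_x$, $R_x$ and $D_x$ all shred $\sim$ for every $x\in V$, and Proposition~\ref{Pr:DSPerms} gives the conclusion.

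There is no real obstacle here; the only thing to be careful about is the algebraic manipulation that turns each shredding requirement into one of the three forms appearing in Lemma~\ref{Lm:w1}, particularly for $D_x$, where the factor $c(c-1)^{-1}$ has to be cleared by multiplying by $c-1$ to obtain the symmetric condition $Wc\cap W(c-1)=0$.
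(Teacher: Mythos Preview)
Your proof is correct and follows essentially the same approach as the paper: verify that the cosets of $W$ form a sudoku partition, invoke Proposition~\ref{Pr:DSPerms}, compute the differences $L_x(u)-L_x(v)$, $R_x(u)-R_x(v)$, $D_x(u)-D_x(v)$, and reduce each shredding condition to one of the three equivalent forms in Lemma~\ref{Lm:w1}. The only cosmetic difference is that the paper writes out the differences directly from the formulas for $*$ and $\rdiv$ rather than first isolating the translations as affine maps, and it obtains $W\cap Wc=0$ for the $L_x$ case immediately (your detour through $W\cap Wc^{-1}=0$ is unnecessary but harmless).
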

\begin{proof}
Since $|V|=q^2$ and every coset of $W$ has cardinality $q=r^s$, the partition induced by $\sim$ is a sudoku partition with blocks of size $q$. We will use Proposition \ref{Pr:DSPerms} to check that $\DS(\F_{q^2},c)$ is a division sudoku with respect to $\sim$. Suppose that $u\sim v$, i.e., $u-v\in W$.

If $x/u \sim x/v$ then $(x+(u-x)c(c-1)^{-1})-(x+(v-x)c(c-1)^{-1}) = w\in W$, so $(u-v)c=w(c-1)\in Wc\cap W(c-1)=0$ by Lemma \ref{Lm:w1}. If $x*u \sim x*v$ then $(x+(u-x)c) - (x+(v-x)c) = w'\in W$, so $(u-v)c=w'\in W\cap Wc=0$. If $u*x\sim v*x$ then $(u+(x-u)c) - (v+(x-v)c) = w''\in W$, so $(u-v)(1-c) = w''\in W\cap W(1-c)=0$ by Lemma \ref{Lm:w1}. In each case we conclude that $u=v$.
\end{proof}

\begin{theorem}\label{Th:DSF}
Let $q$ be a prime power and $c\in\F_{q^2}\setminus\F_q$. Then $L=\DS(\F_{q^2},c)$ is a division sudoku of rank $q$ satisfying $\sigma(L)\ge q+1$. In particular, $\sigma(q)\ge q+1$.
\end{theorem}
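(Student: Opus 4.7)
The plan is to exhibit $q+1$ distinct sudoku partitions of $V=\F_{q^2}$ such that $L=\DS(\F_{q^2},c)$ is a division sudoku with respect to each of them. All of them will arise as coset partitions of the one-dimensional $\F_q$-subspaces of $V$, and each verification will be a single invocation of Proposition \ref{Pr:w2}.

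First I will specialize Proposition \ref{Pr:w2} to the parameters $r=q$, $s=1$. Then $V=\F_{q^2}$ is viewed as a $2$-dimensional $\F_q$-vector space and the admissible subspaces $W$ are the $\F_q$-lines through the origin. These are in bijection with the points of $\mathbb{P}^1(\F_q)$, so there are exactly $q+1$ of them. For each such $W$ the quotient $V/W$ has $q$ cosets of size $q$, yielding a sudoku partition of $V$ of rank $q$.

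Second, I will check the hypothesis $W\cap Wc=0$ uniformly in $W$. Writing $W=\F_q w$ with $w\ne 0$, the $\F_q$-line $Wc=\F_q(wc)$ meets $W$ trivially precisely when $wc\notin\F_q w$, i.e.\ when $c\notin\F_q$. This is exactly the standing assumption on $c$, and in particular it forces $c\in\F_{q^2}\setminus\{0,1\}$ since $0,1\in\F_q$, so the quasigroup $\DS(\F_{q^2},c)$ is well defined. Proposition \ref{Pr:w2} then applies for every one of the $q+1$ lines $W$, so $L$ is a division sudoku of rank $q$ with respect to each associated coset partition. These $q+1$ partitions are pairwise distinct because the block through $0$ is the subspace $W$ itself. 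Hence $\sigma(L)\ge q+1$, and consequently $\sigma(q)\ge q+1$.

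I do not foresee a genuine obstacle here: all the analytic content has been absorbed into Proposition \ref{Pr:w2}, and what remains is the elementary observation that $\F_{q^2}$ contains exactly $q+1$ one-dimensional $\F_q$-subspaces, together with the one-line computation showing that the transversality condition $W\cap Wc=0$ reduces uniformly to the single scalar condition $c\notin\F_q$—which is precisely the hypothesis of the theorem.
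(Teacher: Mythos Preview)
Your proof is correct and follows essentially the same route as the paper: specialize Proposition~\ref{Pr:w2} with $r=q$, $s=1$, count the $q+1$ one-dimensional $\F_q$-subspaces of $\F_{q^2}$, and verify $W\cap Wc=0$ by observing that a nontrivial intersection would force $c\in\F_q$. Your added remark that the $q+1$ partitions are pairwise distinct because the block through $0$ recovers $W$ is a nice explicit touch that the paper leaves implicit.
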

\begin{proof}
Consider $V=\F_{q^2}$ as a vector space over $\F_q$ and let $W$ be a one-dimensional $\F_q$-subspace of $V$. Note that there are $(q^2-1)/(q-1)=q+1$ such subspaces. We claim that $W\cap Wc=0$. Indeed, we have $W=w\F_q$ for some $0\ne w\in V$ and if $W\cap Wc\ne 0$ then $w\lambda = w\mu c$ for some $\lambda$, $\mu\in\F_q$, which implies $c = \mu^{-1}\lambda\in\F_q$, a contradiction. The rest follows from Proposition \ref{Pr:w2} with $r=q$ (and thus with $s=1$).
\end{proof}

The division sudokus $\DS(\F_{q^2},c)$ from Theorem \ref{Th:DSF} are isotopic to the elementary abelian group $C_q\times C_q$. For $c\in \F_{3^2}\setminus \F_3$, the division sudoku $\DS(\F_{3^2},c)$ is ds-isotopic to the division sudoku $L_{17}$ from the proof of Proposition \ref{Pr:MaxSynchro} and also to the example in \cite[Figure 5]{BaileyEtAl}.

We proceed to find more suitable subspaces and thus more sudoku partitions in quartic field extensions.

\begin{proposition}\label{Pr:w3}
Let $r$ be a prime power, $q=r^2$ and $\F_r\le\F_q\le\F_{q^2}$. Let $c\in \F_{q^2}\setminus\F_q$. Then there are exactly $q^2+q$ two-dimensional $\F_r$-subspaces $W$ of $\F_{q^2}$ such that $W\cap Wc=0$.
\end{proposition}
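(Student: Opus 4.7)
The plan is to count the \emph{bad} two-dimensional $\F_r$-subspaces (those with $W\cap Wc\neq 0$) and subtract from the total. Since $V=\F_{q^2}=\F_{r^4}$ has $\F_r$-dimension $4$, the Gaussian binomial coefficient gives
\[
  \binom{4}{2}_r=\frac{(r^4-1)(r^3-1)}{(r^2-1)(r-1)}=(r^2+1)(r^2+r+1)
\]
two-dimensional $\F_r$-subspaces altogether. Multiplication by $c$ is an $\F_r$-linear bijection of $V$, so $Wc$ is also two-dimensional; by dimension count, $W\cap Wc=0$ is equivalent to $W+Wc=V$.

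Next I would characterize bad subspaces as exactly the subspaces of the form $W_w:=\F_r w+\F_r wc$ with $w\neq 0$. Indeed, since $c\notin\F_r$ (because $c\notin\F_q\supseteq\F_r$), the vectors $w$ and $wc$ are $\F_r$-linearly independent whenever $w\neq 0$, so $W_w$ is two-dimensional and clearly satisfies $wc\in W_w\cap W_w c$. Conversely, if $W$ is bad and $0\neq w\in W\cap Wc$, pick $w'\in W$ with $w=w'c$; then $w'=wc^{-1}$... better: just take $0\neq w\in W$ with $wc\in W$, and conclude $W=W_w$ by the dimension argument.

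The main step is counting the distinct $W_w$. Suppose $W_w=W_{w'}$ for nonzero $w,w'$, and write $w'=\alpha w+\beta wc$ with $\alpha,\beta\in\F_r$. Then $w'c=\alpha wc+\beta wc^{2}$ must lie in $W_w$, forcing $\beta wc^{2}\in\F_r w+\F_r wc$. If $\beta\neq 0$, this gives $c^{2}\in\F_r+\F_r c$, so $c$ satisfies a polynomial of degree at most $2$ over $\F_r$, whence $c\in\F_{r^2}=\F_q$, contradicting the hypothesis. Hence $\beta=0$ and $w'\in\F_r^{*}w$. So $w\mapsto W_w$ induces a bijection between $(V\setminus\{0\})/\F_r^{*}$ and the set of bad subspaces, which therefore has cardinality $(r^4-1)/(r-1)=r^3+r^2+r+1$.

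Subtracting yields
\[
  (r^2+1)(r^2+r+1)-(r^3+r^2+r+1)=r^4+r^2=q^2+q,
\]
as required. The only non-bookkeeping step is the implication $\beta\neq 0 \Rightarrow c\in\F_q$, which is precisely where the hypothesis $c\notin\F_q$ is consumed; everything else is dimension counting and the standard formula for the Gaussian binomial.
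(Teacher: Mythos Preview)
Your proof is correct and follows essentially the same strategy as the paper: count all two-dimensional $\F_r$-subspaces, characterize the bad ones as exactly those of the form $\F_r w+\F_r wc$, and subtract. Your parametrization of bad subspaces by $(V\setminus\{0\})/\F_r^{*}$ and the injectivity argument via $c^{2}\notin\F_r+\F_r c$ is a bit more direct than the paper's detour through the $\F_q$-lines $Y'$, but the two arguments are equivalent and both hinge on the same fact that $[\F_r(c):\F_r]>2$.
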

\begin{proof}
Let $Y$ be the set of all two-dimensional $\F_r$-subspaces $W$ of $V=\F_{q^2}$. Then
\begin{displaymath}
    |Y| = \frac{(r^4-1)(r^4-r)}{(r^2-1)(r^2-r)} = r^4+r^3+2r^2+r+1.
\end{displaymath}
Let $Y'=\{v\F_q:v\in V\}$ be the set of all one-dimensional $\F_q$-subspaces $W'$ of $V$. We have
\begin{displaymath}
    |Y'|= \frac{r^4-1}{r^2-1} = r^2+1.
\end{displaymath}
The cyclic group $\langle c\rangle$ acts on $Y$ and $Y'$ by multiplication. Note that $W'\cap W'c=0$ for every $W'\in Y'$, else $v\F_q\cap v\F_q c\ne 0$ implies $c\in \F_q$, a contradiction.

Consider a ``bad'' subspace $W\in Y$ such that $W\cap Wc\ne 0$, say $W\cap Wc=W_1c$ for some one-dimensional $\F_r$-subspace $W_1$ of $W$. We have $W_1c\ne W_1$ (else $c\in \F_r$) and hence $W=W_1+W_1c$. Since $\bigcup Y' = V$, there is $W'\in Y'$ such that $W_1\le W'$. Summarizing, we have $W=W_1+W_1c$, where $W_1$ is a one-dimensional $\F_r$-subspace of some $W'\in Y'$.

Conversely, let $W=W_1+W_1c$, where $W_1$ is a one-dimensional $\F_r$-subspace of some $W'\in Y'$. Then $W\in Y$ is bad since $W\cap Wc = (W_1+W_1c)\cap(W_1c+W_1c^2)\ne 0$.

Let us count the bad subspaces. We have $|Y'|=r^2+1$ choices of $W'$ and then $(r^2-1)/(r-1)=r+1$ choices of $W_1$ in $W'$. Note that $W_1\ne W_1c^2$ (else $c^2\in\F_r$ and $c\in\F_q$). Hence we did not count any bad subspace more than once. Altogether, there are $|Y|-(r^2+1)(r+1) = r^4+r^2$ subspaces $W\in Y$ such that $W\cap Wc=0$.
\end{proof}

Combining Propositions \ref{Pr:w2} and \ref{Pr:w3}, we obtain:

\begin{theorem}\label{Th:last}
Let $q=p^{2s}$ be a prime power and $c\in\F_{q^2}\setminus\F_q$. Then $L=\DS(\F_{q^2},c)$ is a division sudoku of rank $q$ satisfying $\sigma(L)\ge q^2+q = p^{4s}+p^{2s}$. In particular, $\sigma(p^{2s})\ge p^{4s}+p^{2s}$.
\end{theorem}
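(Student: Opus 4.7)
The plan is to combine Propositions \ref{Pr:w2} and \ref{Pr:w3} exactly as the surrounding text promises; the work is really just careful parameter matching. First I would set $r = p^s$, so that $r$ is a prime power, $q = p^{2s} = r^2$, and we have the field tower $\F_r \le \F_q \le \F_{q^2}$ required by Proposition \ref{Pr:w3}. With this choice, Proposition \ref{Pr:w3} applies directly to the given $c \in \F_{q^2}\setminus\F_q$ and produces exactly $q^2 + q$ two-dimensional $\F_r$-subspaces $W$ of $\F_{q^2}$ satisfying $W \cap Wc = 0$.

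Next I would invoke Proposition \ref{Pr:w2} for each such $W$. To apply it we must read the parameter $s$ of that proposition as $2$ (a minor notational clash with the $s$ appearing in the present theorem), because Proposition \ref{Pr:w2} demands $q = r^{s}$ and $\dim_{\F_r} W = s$, which in our setup becomes $q = r^2$ and $\dim_{\F_r} W = 2$. Both conditions are satisfied: the first by construction of $r$, and the second because the subspaces produced by Proposition \ref{Pr:w3} are two-dimensional over $\F_r$. The hypothesis $W \cap Wc = 0$ is exactly what Proposition \ref{Pr:w3} guarantees. Thus for every such $W$, the partition of $\F_{q^2}$ into cosets of $W$ is a sudoku partition (of blocks of size $r^2 = q$) with respect to which $L = \DS(\F_{q^2},c)$ is a division sudoku.

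Finally, to convert ``$q^2+q$ subspaces'' into ``$q^2+q$ distinct synchronized sudoku partitions'' (and hence $\sigma(L) \ge q^2+q$), I would note that the coset partition induced by $W$ has $W$ itself as the unique block containing $0$. Therefore two subspaces yield equal partitions only if they are equal, and the $q^2+q$ different $W$'s yield $q^2+q$ different sudoku partitions of the same underlying set. Since $\sigma$ counts the number of sudoku partitions with respect to which $L$ is a division sudoku, this gives $\sigma(L) \ge q^2 + q$. The ``in particular'' statement $\sigma(p^{2s}) \ge p^{4s} + p^{2s}$ is then immediate from the definition of $\sigma(m)$ as a maximum over all division sudokus of rank $m$.

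There is no substantial obstacle here beyond the bookkeeping: the two preparatory propositions are the real content, and the main care is simply to avoid confusing the roles of $s$, $r$, and $q$ when one passes from the outer statement of the theorem (where $q = p^{2s}$ is the rank) to Proposition \ref{Pr:w2} (where $q = r^{s}$ for some other exponent) to Proposition \ref{Pr:w3} (where $q = r^2$). Once $r = p^s$ is fixed, every hypothesis aligns and the result drops out.
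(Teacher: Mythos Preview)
Your proposal is correct and follows exactly the approach of the paper, which simply states that the theorem is obtained by combining Propositions \ref{Pr:w2} and \ref{Pr:w3}. You have in fact supplied more detail than the paper does---the careful parameter matching $r=p^s$ and the observation that distinct subspaces $W$ yield distinct coset partitions (since $W$ is the unique block through $0$) are left implicit in the paper but make the argument complete.
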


Applying Theorem \ref{Th:last} with $q=4$, we obtain a division sudoku $L$ of rank $4$ with $\sigma(L)\ge 20$.

\subsection{Division sudokus constructed from nearfields}

A (\emph{left}) \emph{nearfield} is an algebra $(F,+,\circ,0,1)$ such that $(F,+,0)$ is an abelian group, $(F,\circ,1)$ is a monoid, every non-zero element $x$ has an inverse $\overline{x}$ such that $x\circ \overline{x}=\overline{x}\circ x = 1$, and $x\circ(y+z) = (x\circ y)+(x\circ z)$ holds for every $x$, $y$, $z\in F$. All finite nearfields were constructed by Dickson and their classification was completed by Zassenhaus \cite{Zassenhaus}.

We will use the following lemma without reference:

\begin{lemma}
Let $F$ be a nearfield. Then:
\begin{enumerate}
\item[(i)] $0\circ x = x\circ 0 = 0$ for every $x\in F$,
\item[(ii)] $(-1)\circ x = -x = x\circ (-1)$ for every $x\in F$,
\item[(iii)] $(-x)\circ y = x\circ (-y) = -(x\circ y)$ for every $x$, $y\in F$.
\end{enumerate}
\end{lemma}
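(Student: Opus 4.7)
The plan is to prove the three parts in the order (i), then the ``easy halves'' of (ii) and (iii) that follow directly from left distributivity, and finally the ``hard halves'' $(-1)\circ x=-x$ and $(-x)\circ y=-(x\circ y)$ together, since without right distributivity these two are equivalent and require a separate argument.

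For (i), the identity $x\circ 0=0$ follows from $x\circ 0=x\circ(0+0)=x\circ 0+x\circ 0$ by cancellation in $(F,+)$. For $0\circ x=0$, the case $x=0$ is already handled, so assume $x\ne 0$, and put $a=0\circ x$. Associativity gives $a\circ\overline{x}=(0\circ x)\circ\overline{x}=0\circ(x\circ\overline{x})=0\circ 1=0$; if $a\ne 0$, multiplying on the left by $\overline{a}$ yields $\overline{x}=0$, contradicting $x\ne 0$. Hence $a=0$.

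The forward half of (ii) and (iii) falls out of left distributivity and (i): from $0=x\circ 0=x\circ(y+(-y))=x\circ y+x\circ(-y)$ we read off $x\circ(-y)=-(x\circ y)$, and the special case $y=1$ gives $x\circ(-1)=-x$.

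The main obstacle is $(-1)\circ x=-x$, because right distributivity is not assumed, so the symmetric trick $(1+(-1))\circ x=0$ is unavailable. I plan to establish it by showing that $-1$ lies in the center of the multiplicative group $F^*$. If $F$ has characteristic $2$ then $-1=1$ and the statement is trivial, so assume $1+1\ne 0$. The key step is to prove that $-1$ is the \emph{unique} element of order $2$ in $(F^*,\circ)$: given $w\in F^*$ with $w\circ w=1$ and $w\ne 1$, left distributivity yields
\begin{displaymath}
w\circ(w+1)=w\circ w+w\circ 1=1+w=w+1,
\end{displaymath}
so either $w+1=0$ (whence $w=-1$) or, multiplying by $\overline{w+1}$, we get $w=1$, a contradiction. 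Since in any group a unique element of given order is necessarily central, $-1$ commutes with every $y\in F^*$, so $(-1)\circ x=x\circ(-1)=-x$ for every $x\ne 0$; the case $x=0$ is (i). Finally, using associativity and the formula just obtained,
\begin{displaymath}
(-x)\circ y=((-1)\circ x)\circ y=(-1)\circ(x\circ y)=-(x\circ y),
\end{displaymath}
which completes (iii).
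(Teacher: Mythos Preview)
Your proof is correct. The paper takes a much shorter route: it simply cites Zassenhaus \cite{Zassenhaus} for parts (i) and (ii), and then derives (iii) from (ii) via associativity, writing $-(x\circ y)=(-1)\circ x\circ y=(-x)\circ y$ and $-(x\circ y)=x\circ(-1)\circ y=x\circ(-y)$---the same derivation as your final paragraph.

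What you do differently is supply a self-contained argument for (i) and (ii) instead of relying on the reference. The interesting part is your treatment of the ``hard half'' $(-1)\circ x=-x$: since right distributivity is unavailable, you argue that $-1$ is the \emph{unique} element of order $2$ in $(F^*,\circ)$ (via $w\circ(w+1)=w+1$), hence central, hence $(-1)\circ x=x\circ(-1)=-x$. This is a clean group-theoretic trick that makes the lemma genuinely elementary, whereas the paper's version requires the reader to look up Zassenhaus. One tiny remark: when you say ``$\overline{x}=0$, contradicting $x\ne 0$,'' the actual contradiction is $1=x\circ\overline{x}=x\circ 0=0$, forcing $F$ to be trivial; you might want to say that explicitly.
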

\begin{proof}
For (i) and (ii) see \cite{Zassenhaus}. Then $-(x\circ y) = (-1)\circ x\circ y$ is equal to $(-x)\circ y$ and also to $x\circ (-1)\circ y = x\circ (-y)$.
\end{proof}

Stein proved in \cite[Theorem 2.5]{Stein} that for every \emph{finite} nearfield $(F,+,\circ,0,1)$ and every $c\in F\setminus \{0,1\}$, the groupoid $(F,*)$ defined by
\begin{equation}\label{Eq:SteinCirc}
    x*y = x+(y-x)\circ c
\end{equation}
is a quasigroup. (We remark that the conclusion remains true for infinite nearfields as well. The left division is given by $x\ldiv y = x+(y-x)\circ \overline{c}$, and the right division by $x\rdiv y = x+(y-x)\circ(1-\overline{1-c})$, cf. \cite{DV}.)

\begin{definition}
Let $q$ be an odd prime power. Define $\D_{q^2} = (\F_{q^2},+,\circ,0,1)$ by modifying the multiplication in the field $\F_{q^2}$ as follows:
\begin{displaymath}
    x\circ y = \left\{\begin{array}{ll}
        xy,&\text{ if $x$ is a square in $\F_{q^2}$},\\
        xy^q,&\text{ otherwise},
    \end{array}\right.
\end{displaymath}
where the power $y^q$ is taken in $\F_{q^2}$. Then $\D_{q^2}$ is a nearfield, so called \emph{quadratic nearfield}.
\end{definition}

It is not difficult to see that $\lambda x = \lambda\circ x = x\circ\lambda$ for every $\lambda\in\F_q$ and $x\in\D_{q^2}$.

\begin{definition}
Let $q$ be an odd prime power, $\D_{q^2}=(\F_{q^2},+,\circ,0,1)$ the quadratic nearfield, and $c\in\F_{q^2}\setminus \{0,1\}$. Then the quasigroup $(\D_{q^2},*)$ with Stein's multiplication \eqref{Eq:SteinCirc} will be denoted by $\DS(\D_{q^2},c)$.
\end{definition}

\begin{lemma}\label{Lm:ZeroSum}
Let $a$, $b\in \F_{q^2}$, $c\in\F_{q^2}\setminus\F_q$ and $0\ne w\in \F_{q^2}$. Then the following conditions hold in $\D_{q^2}$:
\begin{enumerate}
\item[(i)] If $a+b \in \F_q$ and $a\circ c + b\circ c\in \F_q$ then $a+b=0$.
\item[(ii)] If $a+b \in w\F_q$ and $a\circ c + b\circ c\in w\F_q$ then $a+b = 0$.
\end{enumerate}
\end{lemma}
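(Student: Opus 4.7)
The plan is first to note that (i) is the special case $w=1$ of (ii), so I would focus on (ii). Write $a+b=\lambda w$ and $(a\circ c)+(b\circ c)=\mu w$ with $\lambda,\mu\in\F_q$; the goal is to show $\lambda=0$. Recall that in $\D_{q^2}$ one has $x\circ c=xc$ when $x$ is a square in $\F_{q^2}$ (and $0\circ c=0$ in any case), while $x\circ c=xc^q$ when $x$ is a nonsquare. Recall also that every $\lambda\in\F_q$ is a square in $\F_{q^2}$ because $q-1$ divides $(q^2-1)/2$. The argument will proceed by case analysis on the square/nonsquare status of $a$ and $b$.

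The homogeneous cases are immediate. If $a,b$ are both squares (allowing either to be zero), then $(a\circ c)+(b\circ c)=(a+b)c=\lambda wc$, and equating with $\mu w$ yields $\lambda c=\mu\in\F_q$; since $c\notin\F_q$, this forces $\lambda=0$. If $a,b$ are both nonzero nonsquares, the analogous computation gives $\lambda c^q=\mu\in\F_q$, and since $c^q\in\F_q$ would entail $c^{q^2}=c^q$ and hence $c\in\F_q$, again $\lambda=0$. The degenerate subcases in which exactly one of $a,b$ is zero reduce to one of these two patterns via $0\circ c=0$.

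The main obstacle is the mixed case, where without loss of generality $a$ is a nonzero square and $b$ is a nonzero nonsquare. Here $ac+bc^q=\mu w$ together with $a+b=\lambda w$ is a linear system in $a,b$; since $c^q\ne c$, it solves uniquely as
\begin{displaymath}
a=\frac{w(\lambda c^q-\mu)}{c^q-c},\qquad b=\frac{w(\mu-\lambda c)}{c^q-c}.
\end{displaymath}
I would then exploit the identity
\begin{displaymath}
\frac{a}{b}=-\frac{\mu-\lambda c^q}{\mu-\lambda c}=-\frac{(\mu-\lambda c)^q}{\mu-\lambda c}=-(\mu-\lambda c)^{q-1},
\end{displaymath}
which uses that the Frobenius $x\mapsto x^q$ fixes $\F_q$ pointwise. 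Since $q$ is odd, $(\mu-\lambda c)^{q-1}=\bigl((\mu-\lambda c)^{(q-1)/2}\bigr)^2$ is a square in $\F_{q^2}^*$, and $-1$ is also a square there because $8\mid q^2-1$. Hence $a/b$ would be a square in $\F_{q^2}^*$, contradicting the fact that the ratio of a nonzero square and a nonzero nonsquare is a nonsquare. Therefore the mixed case cannot occur under the hypotheses, and $\lambda=0$ follows from the homogeneous cases.
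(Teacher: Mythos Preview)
Your argument is correct, but it proceeds differently from the paper in two respects.

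First, the paper proves (i) and then reduces (ii) to (i) by left-multiplying with the nearfield inverse $v=\overline{w}$: from $a+b\in w\F_q$ and $a\circ c+b\circ c\in w\F_q$ one gets $v\circ a+v\circ b\in\F_q$ and $(v\circ a)\circ c+(v\circ b)\circ c\in\F_q$, using left distributivity and associativity of $\circ$. You instead carry the parameter $w$ through the whole case analysis and prove (ii) directly. Your route avoids invoking the nearfield inverse, at the modest cost of tracking $w$ in each formula; the paper's route isolates the algebraic core in (i) and makes the nearfield structure do the work in passing to (ii).

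Second, in the mixed case the paper argues via Frobenius invariance of the scalar: with $\lambda=a+b\in\F_q$ and $d=ac+bc^q\in\F_q$ one has $d=d^q$, and equating the two expressions for $d$ yields $(a+a^q-\lambda)(c-c^q)=0$, hence $a^q=b$, contradicting that $a$ is a square and $b$ a nonsquare. You instead solve the $2\times 2$ linear system explicitly and compute $a/b=-(\mu-\lambda c)^{q-1}$, then observe that both $(\mu-\lambda c)^{q-1}$ (an even power, since $q$ is odd) and $-1$ (since $4\mid q^2-1$) are squares in $\F_{q^2}^*$, forcing $a/b$ to be a square and giving the same contradiction. The paper's manipulation is a bit slicker; yours is more hands-on but equally valid. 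Your remark that every element of $\F_q$ is a square in $\F_{q^2}$ is true but not actually used in your argument.
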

\begin{proof}
(i) If both $a$, $b$ are squares in $\F_{q^2}$ then $(a+b)c = ac+bc = a\circ c + b\circ c\in\F_q$, which implies $a+b=0$, otherwise we deduce from $0\ne a+b\in\F_q$ and $(a+b)c\in\F_q$ than $c\in\F_q$, a contradiction.

If both $a$, $b$ are nonsquares then $a\circ c + b\circ c = ac^q+bc^q = (a+b)c^q\in\F_q$. But then $\F_q$ contains $((a+b)c^q)^q = (a+b)^qc^{q^2} =(a+b)c$, too, where we have used $a+b\in\F_q$. We conclude $a+b=0$ as above.

Now suppose that $a$ is a square and $b$ is a nonsquare, let $\lambda=a+b$ and $d =  a\circ c + b\circ c = ac + bc^q\in \F_q$. Then $ac+(\lambda-a)c^q = ac+bc^q = d = d^q = a^qc^q + (\lambda-a^q)c$ and hence $0 =(a+a^q-\lambda)(c-c^q)=(a^q-b)(c-c^q)$. We have $c\ne c^q$ since $c\not\in\F^q$ and therefore $a^q=b$, a contradiction with the assumption that $a$ is a square while $b$ is a nonsquare.

(ii) Let $v$ be the multiplicative inverse of $w$ in $\D_{q^2}$, i.e., $v=\overline{w}$. Then the assumptions of (ii) can be rewritten as $v \circ a + v \circ b \in \mathbb F_q$ and $(v \circ a) \circ c + (v \circ b) \circ c \in \mathbb F_q$, using associativity of $\circ$. By (i), $0 = v \circ a + v \circ b  = v\circ(a+b)$, which implies $a+b=0$.
\end{proof}

\begin{proposition}\label{Pr:InducesDSDickson}
Let $q$ be an odd prime power, $V=\F_{q^2}$ a vector space over $\F_q$, $W$ a one-dimensional $\F_q$-subspace of $V$, and $c\in\F_{q^2}\setminus\F_q$. For $x$, $y \in V$ write $x\sim y$ if and only if $x-y \in W$. Then $\DS(\D_{q^2},c)$ is a division sudoku of rank $q$ with respect to the partition induced by the equivalence relation $\sim$.
\end{proposition}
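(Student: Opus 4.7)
The plan is to follow the template of \pref{Pr:w2}, replacing the direct ``factor out $c$'' step (which relied on the right-distributivity of the field) with an invocation of \lref{Lm:ZeroSum}(ii). First, since $W$ is a one-dimensional $\F_q$-subspace of the two-dimensional $\F_q$-space $\F_{q^2}$, each coset of $W$ has size $q$ and there are $q$ such cosets, so the partition induced by $\sim$ is a sudoku partition of rank $q$. It remains to verify, via \pref{Pr:DSPerms}, that for every $x\in\F_{q^2}$ the bijections $L_x$, $R_x$, $D_x$ of $\DS(\D_{q^2},c)$ shred $\sim$. Fix $x$ and suppose $u\sim v$; write $W=w\F_q$ for some $0\ne w$.

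For $L_x$, compute $x*u-x*v=(u-x)\circ c-(v-x)\circ c$. Setting $a=u-x$ and $b=x-v$ we have $a\circ c + b\circ c = x*u-x*v$ and $a+b=u-v\in W$. If also $x*u\sim x*v$ then both $a+b$ and $a\circ c+b\circ c$ lie in $W=w\F_q$, and \lref{Lm:ZeroSum}(ii) forces $a+b=0$, i.e.\ $u=v$. For $R_x$, expand $u*x-v*x=(u-v)+\bigl((x-u)\circ c-(x-v)\circ c\bigr)$; under the hypothesis $u*x\sim v*x$ and the assumption $u-v\in W$, the bracketed expression belongs to $W$. With $a=x-u$ and $b=v-x$, one has $a+b=v-u\in W$ and $a\circ c+b\circ c\in W$, and \lref{Lm:ZeroSum}(ii) again yields $a+b=0$, hence $u=v$.

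For $D_x$, use the formula for the right division recalled in the paragraph preceding the definition of $\DS(\D_{q^2},c)$, namely $x\rdiv y = x+(y-x)\circ c'$ with $c'=1-\overline{1-c}$. Provided $c'\notin\F_q$, the computation for $D_x$ is identical to the one for $L_x$ with $c$ replaced by $c'$, and \lref{Lm:ZeroSum}(ii) finishes the argument. To see $c'\notin\F_q$, note that $\F_q$ embeds in $\D_{q^2}$ as a subfield on which $\circ$ agrees with field multiplication (because every element of $\F_q$ is a square in $\F_{q^2}$ for odd $q$); in particular $\F_q$ is closed under the nearfield inverse. Hence $\overline{1-c}\in\F_q$ would imply $1-c\in\F_q$ and $c\in\F_q$, contrary to assumption.

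The only real obstacle has already been packaged into \lref{Lm:ZeroSum}: because $\circ$ is not right-distributive, we cannot collapse $(u-x)\circ c-(v-x)\circ c$ into $(u-v)\circ c$, and must instead read off the conclusion from the two-term sum $a\circ c+b\circ c$ by the square/nonsquare case analysis already carried out there. With that lemma available, each of the three shredding verifications is a mechanical transcription of the field case.
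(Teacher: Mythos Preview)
Your proof is correct. The $L_x$ and $R_x$ verifications match the paper's almost verbatim. For the remaining (sudoku) condition, however, the paper takes a shortcut you may find instructive: rather than checking that $D_x$ shreds $\sim$ via the explicit right-division formula $x\rdiv y = x+(y-x)\circ c'$, it verifies the equivalent condition \eqref{Eq:DSQuasi1} directly. From $x*y=u*v$ with $x\sim u$, $y\sim v$ one gets $(y-x)\circ c + (u-v)\circ c = u-x \in w\F_q$ and $(y-x)+(u-v)=(u-x)+(y-v)\in w\F_q$, and \lref{Lm:ZeroSum}(ii) gives $y-x=v-u$, whence $x=u$ and $y=v$. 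This sidesteps both the right-division formula and your auxiliary verification that $c'=1-\overline{1-c}\notin\F_q$. Your route is perfectly sound (and your argument that $c'\notin\F_q$ is correct, using that every element of $\F_q^*$ is a square in $\F_{q^2}^*$ for odd $q$), but the paper's choice to mix \eqref{Eq:DSQuasi1} with \eqref{Eq:DSPerms2}--\eqref{Eq:DSPerms3} is a small economy worth noting.
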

\begin{proof}
Let $W=w\F_q$ for some $0\ne w\in V$. We will use \eqref{Eq:DSQuasi1}, \eqref{Eq:DSPerms2} and \eqref{Eq:DSPerms3} to show that $\DS(\D_{q^2},c)$ is a division sudoku with respect to $\sim$.

Suppose that $x\sim u$, $y\sim v$ and $x*y=u*v$. Then $x+(y-x)\circ c=u+(v-u)\circ c$ and therefore $(y-x)\circ c + (u-v)\circ c = u - x \in w\F_q$. We also have $(y-x)+(u-v) = (u-x) + (y-v) \in w\F_q$. By Lemma \ref{Lm:ZeroSum}(ii), $(u-x)+(y-v)=0$, or, equivalently, $y-x=v-u$. Combining this with $x+(y-x)\circ c=u+(v-u)\circ c$ yields $x=u$ and then $y=v$.

Now suppose that $u\sim v$ and $x*u\sim x*v$. Then $ (x+(u-x)\circ c) - (x+(v-x)\circ c) = (u-x)\circ c + (x-v)\circ c$ is in $w\F_q$ and so is $(u-x)+(x-v) =u-v$. By Lemma \ref{Lm:ZeroSum}(ii), $u=v$.

Finally, suppose that $u\sim v$ and $u*x\sim v*x$. Then $(u+(x-u)\circ c)-(v+(x-v)\circ c)$ is in $w\F_q$ and so is $u-v$. Combining, we deduce that $(x-u)\circ c+(v-x)\circ c\in w\F_q$. We also have $(x-u)+(v-x) = v-u\in w\F_q$, and Lemma \ref{Lm:ZeroSum}(ii) yields $u=v$.
\end{proof}

\begin{theorem}
Let $q$ be an odd prime power and $c\in\F_{q^2}\setminus\F_q$. Then $L=\DS(\D_{q^2},c)$ is a division sudoku satisfying $\sigma(L)\ge q+1$.
\end{theorem}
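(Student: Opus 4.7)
The plan is to follow the template set by \tref{Th:DSF} for fields, replacing the use of \pref{Pr:w2} with \pref{Pr:InducesDSDickson}. Since the divisibility condition $W\cap Wc=0$ from the field case is not explicitly required in \pref{Pr:InducesDSDickson}, the argument here is actually more direct: \pref{Pr:InducesDSDickson} already guarantees, for \emph{every} one-dimensional $\F_q$-subspace $W$ of $V=\F_{q^2}$ and every $c\in\F_{q^2}\setminus\F_q$, that $\DS(\D_{q^2},c)$ is a division sudoku with respect to the partition induced by the coset equivalence $\sim_W$.

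First I would count the one-dimensional $\F_q$-subspaces of $V$. Viewing $V=\F_{q^2}$ as a two-dimensional vector space over $\F_q$, a standard count gives
\begin{displaymath}
    \frac{q^2-1}{q-1} = q+1
\end{displaymath}
such subspaces. Next I would invoke \pref{Pr:InducesDSDickson} once for each of these $q+1$ subspaces, concluding that for each such $W$ the quasigroup $L=\DS(\D_{q^2},c)$ is a division sudoku with respect to the sudoku partition $\{w+W:w\in V\}/\!\!\sim_W$.

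Finally I would observe that distinct one-dimensional $\F_q$-subspaces $W\ne W'$ yield distinct sudoku partitions: the block of $\sim_W$ containing $0$ is $W$ itself, and likewise for $W'$, so the partitions disagree on the block through $0$. Therefore the $q+1$ partitions are pairwise distinct, each of them making $L$ a division sudoku, whence $\sigma(L)\ge q+1$, as claimed.

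There is no real obstacle here, since all the heavy lifting was already done in \pref{Pr:InducesDSDickson} (which in turn rests on \lref{Lm:ZeroSum}, the key computation for the quadratic nearfield). The only thing to verify beyond a direct transcription of the field argument is that one-dimensionality over $\F_q$ suffices, which is immediate because \pref{Pr:InducesDSDickson} is stated precisely in that generality and its hypothesis $c\in\F_{q^2}\setminus\F_q$ matches ours.
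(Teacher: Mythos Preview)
Your proposal is correct and follows essentially the same approach as the paper, which simply states: apply \pref{Pr:InducesDSDickson} to each of the $q+1$ one-dimensional $\F_q$-subspaces of $\F_{q^2}$. Your added remarks on counting the subspaces and verifying that distinct subspaces yield distinct partitions just make explicit what the paper leaves implicit.
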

\begin{proof}
Apply Proposition \ref{Pr:InducesDSDickson} to each of the $q+1$ one-dimensional $\F_q$-subspaces of $\F_{q^2}$.
\end{proof}

For an odd prime power $q$ and $c_1$, $c_2\in \F_{q^2}\setminus \F_q$, the division sudokus $\DS(\F_{q^2},c_1)$, $\DS(\D_{q^2},c_2)$ are not isotopic since the former is isotopic to a group while the latter is not.
For every $c\in\F_{3^2}\setminus \F_3$, the division sudoku $\DS(\D_{3^2},c)$ is ds-isotopic to the division sudoku $L_{179}$ from the proof of Proposition \ref{Pr:MaxSynchro} and also to the example $L_0$ from the introduction.

\section{Open problems}

\begin{problem}
Determine the number of standard division sudokus of rank $4$; absolutely, up to ds-isotopism and up to ds-paratopism.
\end{problem}

\begin{problem}
For $m>3$, find lower and upper bounds for the number of standard division sudokus of rank $m$.
\end{problem}

Recall that $\sigma(L)$ is the number of sudoku partitions with respect to which $L$ is a division sudoku and
\begin{displaymath}
    \sigma(m) = \max\{\sigma(L):L\text{ is a division sudoku of rank }m\}.
\end{displaymath}
Our results imply that $\sigma(3)=4$, $\sigma(q)\ge q+1$ for any prime power $q$ and $\sigma(p^{2s})\ge p^{4s}+p^{2s}$ for any prime $p$.

\begin{problem}
Investigate $\sigma(m)$ for $m>3$.
\end{problem}

For a prime power $q$ and $c\in\F_{q^2}\setminus\F_q$ consider the division sudoku $L=\DS(\F_{q^2},c)$. By Theorem \ref{Th:DSF}, $\sigma(L)\ge q+1$. For a fixed one-dimensional $\F_q$-subspace $W$ of $\F_{q^2}$, the blocks of the corresponding sudoku partition are all the lines in $\F_{q^2}$ parallel to $W$. Taken together, the blocks of the $q+1$ sudoku partitions for $L$ form an affine plane. Consequently, two blocks from distinct sudoku partitions of $L$ intersect in precisely one point.

Let $C$ be a collection of sudoku partitions on a latin square $L$ of rank $m$ (not necessarily a prime power) such that $L$ is a division sudoku with respect to each element of $C$. Then $C$ is said to be \emph{affine} if any two blocks from two distinct sudoku partitions of $C$ intersect in precisely one point.

\begin{problem}
For a division sudoku $L$ let $\tau(L)$ be the cardinality of a largest affine collection of sudoku partitions on $L$. Let
\begin{displaymath}
    \tau(m)=\max\{\tau(L):L\text{ is a division sudoku of rank }m\}.
\end{displaymath}
What is $\tau(m)$? Is $\tau(m)=N(m)+2$, where $N(m)$ is the maximal number of mutually orthogonal latin squares of order $m$?
\end{problem}

Let $L$ be a standard division sudoku. A subset $S$ of $L$ is a \emph{critical set} if $S$ determines $L$ as a standard division sudoku and no proper subset of $S$ does.

\begin{problem}
For $m\ge 3$, determine the cardinality of a smallest critical set among all standard division sudokus of rank $m$.
\end{problem}

\appendix

\section*{Appendix: Standard division sudokus of rank $3$ up to ds-isotopism}

\bigskip

\begin{center}
\begin{tabular}{P{28mm}P{28mm}P{28mm}P{28mm}}
\settablename{DS(9,1)}\namedsmallsudoku{1&4&7&2&5&8&3&6&9}{8&2&5&9&3&6&7&1&4}{6&9&3&4&7&1&5&8&2}{2&5&8&1&6&9&4&3&7}{9&3&6&7&2&4&8&5&1}{4&7&1&5&8&3&2&9&6}{3&8&4&6&1&7&9&2&5}{5&1&9&8&4&2&6&7&3}{7&6&2&3&9&5&1&4&8}
&\settablename{DS(9,2)}\namedsmallsudoku{1&4&7&2&5&8&3&6&9}{8&2&5&9&3&6&7&1&4}{6&9&3&4&7&1&5&8&2}{2&5&8&1&4&7&6&9&3}{9&3&6&8&2&5&1&4&7}{4&7&1&6&9&3&8&2&5}{3&6&9&5&8&2&4&7&1}{7&1&4&3&6&9&2&5&8}{5&8&2&7&1&4&9&3&6}
&\settablename{DS(9,3)}\namedsmallsudoku{1&4&7&2&5&8&3&6&9}{8&2&5&9&3&6&7&1&4}{6&9&3&4&7&1&5&8&2}{2&5&8&1&4&7&6&9&3}{9&3&6&8&2&5&1&4&7}{4&7&1&6&9&3&8&2&5}{3&8&4&5&1&9&2&7&6}{5&1&9&7&6&2&4&3&8}{7&6&2&3&8&4&9&5&1}
&\settablename{DS(9,4)}\namedsmallsudoku{1&4&7&2&5&8&3&6&9}{8&2&5&9&3&6&7&1&4}{6&9&3&4&7&1&5&8&2}{2&5&8&1&4&9&6&3&7}{9&3&6&7&2&5&8&4&1}{4&7&1&6&8&3&2&9&5}{3&6&9&5&1&7&4&2&8}{7&1&4&8&6&2&9&5&3}{5&8&2&3&9&4&1&7&6}
\\
\\
\settablename{DS(9,5)}\namedsmallsudoku{1&4&7&2&5&8&3&6&9}{8&2&5&9&3&6&7&1&4}{6&9&3&4&7&1&5&8&2}{2&5&8&1&4&9&6&3&7}{9&3&6&7&2&5&8&4&1}{4&7&1&6&8&3&2&9&5}{3&8&4&5&1&7&9&2&6}{5&1&9&8&6&2&4&7&3}{7&6&2&3&9&4&1&5&8}
&\settablename{DS(9,6)}\namedsmallsudoku{1&4&7&2&5&8&3&6&9}{8&2&5&9&3&6&7&1&4}{6&9&3&4&7&1&5&8&2}{2&5&8&1&4&9&6&3&7}{9&3&6&7&2&5&8&4&1}{4&7&1&6&8&3&2&9&5}{3&8&4&5&9&2&1&7&6}{5&1&9&3&6&7&4&2&8}{7&6&2&8&1&4&9&5&3}
&\settablename{DS(9,7)}\namedsmallsudoku{1&4&7&2&5&8&3&6&9}{8&2&5&9&3&6&7&1&4}{6&9&3&4&7&1&5&8&2}{2&5&8&1&4&9&6&7&3}{9&3&6&7&2&5&1&4&8}{4&7&1&6&8&3&9&2&5}{3&6&9&8&1&4&2&5&7}{7&1&4&5&9&2&8&3&6}{5&8&2&3&6&7&4&9&1}
&\settablename{DS(9,8)}\namedsmallsudoku{1&4&7&2&5&8&3&6&9}{8&2&5&9&3&6&7&1&4}{6&9&3&4&7&1&5&8&2}{2&5&8&1&4&9&6&7&3}{9&3&6&7&2&5&1&4&8}{4&7&1&6&8&3&9&2&5}{3&8&4&5&1&7&2&9&6}{5&1&9&8&6&2&4&3&7}{7&6&2&3&9&4&8&5&1}
\\
\\
\settablename{DS(9,9)}\namedsmallsudoku{1&4&7&2&5&8&3&6&9}{8&2&5&9&3&6&7&1&4}{6&9&3&4&7&1&5&8&2}{2&5&8&1&6&9&4&3&7}{9&3&6&7&2&4&8&5&1}{4&7&1&5&8&3&2&9&6}{3&6&9&8&4&2&1&7&5}{7&1&4&3&9&5&6&2&8}{5&8&2&6&1&7&9&4&3}
&\settablename{DS(9,10)}\namedsmallsudoku{1&4&7&2&5&8&3&6&9}{8&2&5&9&3&6&7&1&4}{6&9&3&4&7&1&5&8&2}{2&5&8&1&6&9&4&3&7}{9&3&6&7&2&4&8&5&1}{4&7&1&5&8&3&2&9&6}{3&8&4&6&9&2&1&7&5}{5&1&9&3&4&7&6&2&8}{7&6&2&8&1&5&9&4&3}
&\settablename{DS(9,11)}\namedsmallsudoku{1&4&7&2&5&8&3&6&9}{8&2&5&9&3&6&7&1&4}{6&9&3&4&7&1&5&8&2}{2&5&8&1&6&9&4&7&3}{9&3&6&7&2&4&1&5&8}{4&7&1&5&8&3&9&2&6}{3&8&4&6&1&7&2&9&5}{5&1&9&8&4&2&6&3&7}{7&6&2&3&9&5&8&4&1}
&\settablename{DS(9,12)}\namedsmallsudoku{1&4&7&2&5&8&3&6&9}{8&2&5&9&3&6&7&1&4}{6&9&3&4&7&1&5&8&2}{2&5&8&1&9&4&6&3&7}{9&3&6&5&2&7&8&4&1}{4&7&1&8&6&3&2&9&5}{3&6&9&7&1&5&4&2&8}{7&1&4&6&8&2&9&5&3}{5&8&2&3&4&9&1&7&6}
\\
\\
\settablename{DS(9,13)}\namedsmallsudoku{1&4&7&2&5&8&3&6&9}{8&2&5&9&3&6&7&1&4}{6&9&3&4&7&1&5&8&2}{2&5&8&1&9&4&6&3&7}{9&3&6&5&2&7&8&4&1}{4&7&1&8&6&3&2&9&5}{3&8&4&7&1&5&9&2&6}{5&1&9&6&8&2&4&7&3}{7&6&2&3&4&9&1&5&8}
&\settablename{DS(9,14)}\namedsmallsudoku{1&4&7&2&5&8&3&6&9}{8&2&5&9&3&6&7&1&4}{6&9&3&4&7&1&5&8&2}{2&5&8&1&9&4&6&7&3}{9&3&6&5&2&7&1&4&8}{4&7&1&8&6&3&9&2&5}{3&6&9&7&4&2&8&5&1}{7&1&4&3&8&5&2&9&6}{5&8&2&6&1&9&4&3&7}
&\settablename{DS(9,15)}\namedsmallsudoku{1&4&7&2&5&8&3&6&9}{8&2&5&9&3&6&7&1&4}{6&9&3&4&7&1&5&8&2}{2&5&8&1&9&4&6&7&3}{9&3&6&5&2&7&1&4&8}{4&7&1&8&6&3&9&2&5}{3&8&4&6&1&9&2&5&7}{5&1&9&7&4&2&8&3&6}{7&6&2&3&8&5&4&9&1}
&\settablename{DS(9,16)}\namedsmallsudoku{1&4&7&2&5&8&3&6&9}{8&2&5&9&3&6&7&1&4}{6&9&3&4&7&1&5&8&2}{2&5&8&1&9&4&6&7&3}{9&3&6&5&2&7&1&4&8}{4&7&1&8&6&3&9&2&5}{3&8&4&7&1&5&2&9&6}{5&1&9&6&8&2&4&3&7}{7&6&2&3&4&9&8&5&1}
\\
\\
\settablename{DS(9,17)}\namedsmallsudoku{1&4&7&2&5&8&3&6&9}{8&2&5&9&3&6&7&1&4}{6&9&3&4&7&1&5&8&2}{2&5&8&3&6&9&1&4&7}{9&3&6&7&1&4&8&2&5}{4&7&1&5&8&2&6&9&3}{3&6&9&1&4&7&2&5&8}{7&1&4&8&2&5&9&3&6}{5&8&2&6&9&3&4&7&1}
&\settablename{DS(9,18)}\namedsmallsudoku{1&4&7&2&5&8&3&6&9}{8&2&5&9&3&6&7&1&4}{6&9&3&4&7&1&5&8&2}{2&5&8&3&6&9&1&4&7}{9&3&6&7&1&4&8&2&5}{4&7&1&5&8&2&6&9&3}{3&6&9&8&2&5&4&7&1}{7&1&4&6&9&3&2&5&8}{5&8&2&1&4&7&9&3&6}
&\settablename{DS(9,19)}\namedsmallsudoku{1&4&7&2&5&8&3&6&9}{8&2&5&9&3&6&7&1&4}{6&9&3&4&7&1&5&8&2}{2&5&8&3&6&9&1&4&7}{9&3&6&7&1&4&8&2&5}{4&7&1&5&8&2&6&9&3}{3&8&4&1&9&5&2&7&6}{5&1&9&6&2&7&4&3&8}{7&6&2&8&4&3&9&5&1}
&\settablename{DS(9,20)}\namedsmallsudoku{1&4&7&2&5&8&3&6&9}{8&2&5&9&3&6&7&1&4}{6&9&3&4&7&1&5&8&2}{2&5&8&3&6&9&1&4&7}{9&3&6&7&1&4&8&2&5}{4&7&1&5&8&2&6&9&3}{3&8&4&6&2&7&9&5&1}{5&1&9&8&4&3&2&7&6}{7&6&2&1&9&5&4&3&8}
\\
\\
\settablename{DS(9,21)}\namedsmallsudoku{1&4&7&2&5&8&3&6&9}{8&2&5&9&3&6&7&1&4}{6&9&3&4&7&1&5&8&2}{2&5&8&3&6&9&4&7&1}{9&3&6&7&1&4&2&5&8}{4&7&1&5&8&2&9&3&6}{3&6&9&1&4&7&8&2&5}{7&1&4&8&2&5&6&9&3}{5&8&2&6&9&3&1&4&7}
&\settablename{DS(9,22)}\namedsmallsudoku{1&4&7&2&5&8&3&6&9}{8&2&5&9&3&6&7&1&4}{6&9&3&4&7&1&5&8&2}{2&5&8&3&6&9&4&7&1}{9&3&6&7&1&4&2&5&8}{4&7&1&5&8&2&9&3&6}{3&8&4&1&9&5&6&2&7}{5&1&9&6&2&7&8&4&3}{7&6&2&8&4&3&1&9&5}
&\settablename{DS(9,23)}\namedsmallsudoku{1&4&7&2&5&8&3&6&9}{8&2&5&9&3&6&7&1&4}{6&9&3&4&7&1&5&8&2}{2&5&8&3&6&9&4&7&1}{9&3&6&7&1&4&2&5&8}{4&7&1&5&8&2&9&3&6}{3&8&4&6&2&7&1&9&5}{5&1&9&8&4&3&6&2&7}{7&6&2&1&9&5&8&4&3}
&\settablename{DS(9,24)}\namedsmallsudoku{1&4&7&2&5&8&3&6&9}{8&2&5&9&3&6&7&1&4}{6&9&3&4&7&1&5&8&2}{2&5&8&3&9&4&1&7&6}{9&3&6&5&1&7&4&2&8}{4&7&1&8&6&2&9&5&3}{3&8&4&7&2&5&6&9&1}{5&1&9&6&8&3&2&4&7}{7&6&2&1&4&9&8&3&5}
\end{tabular}
\end{center}

\pagebreak

\begin{center}
\begin{tabular}{P{28mm}P{28mm}P{28mm}P{28mm}}
\settablename{DS(9,25)}\namedsmallsudoku{1&4&7&2&5&8&3&6&9}{8&2&5&9&3&6&7&1&4}{6&9&3&4&7&1&5&8&2}{2&5&8&3&9&4&6&7&1}{9&3&6&5&1&7&2&4&8}{4&7&1&8&6&2&9&3&5}{3&8&4&6&2&9&1&5&7}{5&1&9&7&4&3&8&2&6}{7&6&2&1&8&5&4&9&3}
&\settablename{DS(9,26)}\namedsmallsudoku{1&4&7&2&5&8&3&6&9}{8&2&5&9&3&6&7&1&4}{6&9&3&4&7&1&5&8&2}{2&5&8&3&9&4&6&7&1}{9&3&6&5&1&7&2&4&8}{4&7&1&8&6&2&9&3&5}{3&8&4&7&2&5&1&9&6}{5&1&9&6&8&3&4&2&7}{7&6&2&1&4&9&8&5&3}
&\settablename{DS(9,27)}\namedsmallsudoku{1&4&7&2&5&8&3&6&9}{8&2&5&9&3&6&7&1&4}{6&9&3&4&7&1&5&8&2}{2&5&8&7&1&4&6&9&3}{9&3&6&5&8&2&1&4&7}{4&7&1&3&6&9&8&2&5}{3&8&4&1&9&5&2&7&6}{5&1&9&6&2&7&4&3&8}{7&6&2&8&4&3&9&5&1}
&\settablename{DS(9,28)}\namedsmallsudoku{1&4&7&2&5&8&3&6&9}{8&2&5&9&3&6&7&1&4}{6&9&3&4&7&1&5&8&2}{2&5&9&1&6&7&4&3&8}{7&3&6&8&2&4&9&5&1}{4&8&1&5&9&3&2&7&6}{3&6&8&7&4&2&1&9&5}{9&1&4&3&8&5&6&2&7}{5&7&2&6&1&9&8&4&3}
\\
\\
\settablename{DS(9,29)}\namedsmallsudoku{1&4&7&2&5&8&3&6&9}{8&2&5&9&3&6&7&1&4}{6&9&3&4&7&1&5&8&2}{2&5&9&1&6&7&4&3&8}{7&3&6&8&2&4&9&5&1}{4&8&1&5&9&3&2&7&6}{3&7&4&6&8&2&1&9&5}{5&1&8&3&4&9&6&2&7}{9&6&2&7&1&5&8&4&3}
&\settablename{DS(9,30)}\namedsmallsudoku{1&4&7&2&5&8&3&6&9}{8&2&5&9&3&6&7&1&4}{6&9&3&4&7&1&5&8&2}{2&5&9&1&8&4&6&3&7}{7&3&6&5&2&9&8&4&1}{4&8&1&7&6&3&2&9&5}{3&7&4&6&9&2&1&5&8}{5&1&8&3&4&7&9&2&6}{9&6&2&8&1&5&4&7&3}
&\settablename{DS(9,31)}\namedsmallsudoku{1&4&7&2&5&8&3&6&9}{8&2&5&9&3&6&7&1&4}{6&9&3&4&7&1&5&8&2}{2&5&9&1&8&4&6&3&7}{7&3&6&5&2&9&8&4&1}{4&8&1&7&6&3&2&9&5}{3&7&4&8&1&5&9&2&6}{5&1&8&6&9&2&4&7&3}{9&6&2&3&4&7&1&5&8}
&\settablename{DS(9,32)}\namedsmallsudoku{1&4&7&2&5&8&3&6&9}{8&2&5&9&3&6&7&1&4}{6&9&3&4&7&1&5&8&2}{2&5&9&1&8&4&6&7&3}{7&3&6&5&2&9&1&4&8}{4&8&1&7&6&3&9&2&5}{3&7&4&8&1&5&2&9&6}{5&1&8&6&9&2&4&3&7}{9&6&2&3&4&7&8&5&1}
\\
\\
\settablename{DS(9,33)}\namedsmallsudoku{1&4&7&2&5&8&3&6&9}{8&2&5&9&3&6&7&1&4}{6&9&3&4&7&1&5&8&2}{2&5&9&3&6&7&1&4&8}{7&3&6&8&1&4&9&2&5}{4&8&1&5&9&2&6&7&3}{3&6&8&1&4&9&2&5&7}{9&1&4&7&2&5&8&3&6}{5&7&2&6&8&3&4&9&1}
&\settablename{DS(9,34)}\namedsmallsudoku{1&4&7&2&5&8&3&6&9}{8&2&5&9&3&6&7&1&4}{6&9&3&4&7&1&5&8&2}{2&5&9&3&6&7&1&4&8}{7&3&6&8&1&4&9&2&5}{4&8&1&5&9&2&6&7&3}{3&6&8&7&2&5&4&9&1}{9&1&4&6&8&3&2&5&7}{5&7&2&1&4&9&8&3&6}
&\settablename{DS(9,35)}\namedsmallsudoku{1&4&7&2&5&8&3&6&9}{8&2&5&9&3&6&7&1&4}{6&9&3&4&7&1&5&8&2}{2&5&9&3&6&7&1&4&8}{7&3&6&8&1&4&9&2&5}{4&8&1&5&9&2&6&7&3}{3&7&4&1&8&5&2&9&6}{5&1&8&6&2&9&4&3&7}{9&6&2&7&4&3&8&5&1}
&\settablename{DS(9,36)}\namedsmallsudoku{1&4&7&2&5&8&3&6&9}{8&2&5&9&3&6&7&1&4}{6&9&3&4&7&1&5&8&2}{2&5&9&3&6&7&1&4&8}{7&3&6&8&1&4&9&2&5}{4&8&1&5&9&2&6&7&3}{3&7&4&6&2&9&8&5&1}{5&1&8&7&4&3&2&9&6}{9&6&2&1&8&5&4&3&7}
\\
\\
\settablename{DS(9,37)}\namedsmallsudoku{1&4&7&2&5&8&3&6&9}{8&2&5&9&3&6&7&1&4}{6&9&3&4&7&1&5&8&2}{2&5&9&3&6&7&8&4&1}{7&3&6&8&1&4&2&9&5}{4&8&1&5&9&2&6&3&7}{3&7&4&1&8&5&9&2&6}{5&1&8&6&2&9&4&7&3}{9&6&2&7&4&3&1&5&8}
&\settablename{DS(9,38)}\namedsmallsudoku{1&4&7&2&5&8&3&6&9}{8&2&5&9&3&6&7&1&4}{6&9&3&4&7&1&5&8&2}{2&5&9&3&8&4&6&7&1}{7&3&6&5&1&9&2&4&8}{4&8&1&7&6&2&9&3&5}{3&6&8&1&9&5&4&2&7}{9&1&4&6&2&7&8&5&3}{5&7&2&8&4&3&1&9&6}
&\settablename{DS(9,39)}\namedsmallsudoku{1&4&7&2&5&8&3&6&9}{8&2&5&9&3&6&7&1&4}{6&9&3&4&7&1&5&8&2}{2&5&9&3&8&4&6&7&1}{7&3&6&5&1&9&2&4&8}{4&8&1&7&6&2&9&3&5}{3&7&4&1&9&5&8&2&6}{5&1&8&6&2&7&4&9&3}{9&6&2&8&4&3&1&5&7}
&\settablename{DS(9,40)}\namedsmallsudoku{1&4&7&2&5&8&3&6&9}{8&2&5&9&3&6&7&1&4}{6&9&3&4&7&1&5&8&2}{2&5&9&8&1&4&6&7&3}{7&3&6&5&9&2&1&4&8}{4&8&1&3&6&7&9&2&5}{3&7&4&1&8&5&2&9&6}{5&1&8&6&2&9&4&3&7}{9&6&2&7&4&3&8&5&1}
\\
\\
\settablename{DS(9,41)}\namedsmallsudoku{1&4&7&2&5&8&3&6&9}{8&2&5&9&3&6&7&1&4}{6&9&3&4&7&1&5&8&2}{2&7&4&1&9&5&6&3&8}{5&3&8&6&2&7&9&4&1}{9&6&1&8&4&3&2&7&5}{3&8&6&5&1&9&4&2&7}{4&1&9&7&6&2&8&5&3}{7&5&2&3&8&4&1&9&6}
&\settablename{DS(9,42)}\namedsmallsudoku{1&4&7&2&5&8&3&9&6}{8&2&5&9&3&6&4&1&7}{6&9&3&4&7&1&8&5&2}{2&5&8&1&4&7&6&3&9}{9&3&6&8&2&5&7&4&1}{4&7&1&6&9&3&2&8&5}{3&6&9&5&8&2&1&7&4}{7&1&4&3&6&9&5&2&8}{5&8&2&7&1&4&9&6&3}
&\settablename{DS(9,43)}\namedsmallsudoku{1&4&7&2&5&8&3&9&6}{8&2&5&9&3&6&4&1&7}{6&9&3&4&7&1&8&5&2}{2&5&8&1&4&9&6&7&3}{9&3&6&7&2&5&1&4&8}{4&7&1&6&8&3&9&2&5}{3&6&9&5&1&7&2&8&4}{7&1&4&8&6&2&5&3&9}{5&8&2&3&9&4&7&6&1}
&\settablename{DS(9,44)}\namedsmallsudoku{1&4&7&2&5&8&3&9&6}{8&2&5&9&3&6&4&1&7}{6&9&3&4&7&1&8&5&2}{2&5&8&1&4&9&6&7&3}{9&3&6&7&2&5&1&4&8}{4&7&1&6&8&3&9&2&5}{3&8&4&5&1&7&2&6&9}{5&1&9&8&6&2&7&3&4}{7&6&2&3&9&4&5&8&1}
\\
\\
\settablename{DS(9,45)}\namedsmallsudoku{1&4&7&2&5&8&3&9&6}{8&2&5&9&3&6&4&1&7}{6&9&3&4&7&1&8&5&2}{2&5&8&1&4&9&6&7&3}{9&3&6&7&2&5&1&4&8}{4&7&1&6&8&3&9&2&5}{3&8&4&5&9&2&7&6&1}{5&1&9&3&6&7&2&8&4}{7&6&2&8&1&4&5&3&9}
&\settablename{DS(9,46)}\namedsmallsudoku{1&4&7&2&5&8&3&9&6}{8&2&5&9&3&6&4&1&7}{6&9&3&4&7&1&8&5&2}{2&5&8&1&4&9&7&6&3}{9&3&6&7&2&5&1&8&4}{4&7&1&6&8&3&5&2&9}{3&6&9&5&1&7&2&4&8}{7&1&4&8&6&2&9&3&5}{5&8&2&3&9&4&6&7&1}
&\settablename{DS(9,47)}\namedsmallsudoku{1&4&7&2&5&8&3&9&6}{8&2&5&9&3&6&4&1&7}{6&9&3&4&7&1&8&5&2}{2&5&8&1&4&9&7&6&3}{9&3&6&7&2&5&1&8&4}{4&7&1&6&8&3&5&2&9}{3&6&9&8&1&4&2&7&5}{7&1&4&5&9&2&6&3&8}{5&8&2&3&6&7&9&4&1}
&\settablename{DS(9,48)}\namedsmallsudoku{1&4&7&2&5&8&3&9&6}{8&2&5&9&3&6&4&1&7}{6&9&3&4&7&1&8&5&2}{2&5&8&1&4&9&7&6&3}{9&3&6&7&2&5&1&8&4}{4&7&1&6&8&3&5&2&9}{3&8&4&5&9&2&6&7&1}{5&1&9&3&6&7&2&4&8}{7&6&2&8&1&4&9&3&5}
\end{tabular}
\end{center}

\pagebreak

\begin{center}
\begin{tabular}{P{28mm}P{28mm}P{28mm}P{28mm}}
\settablename{DS(9,49)}\namedsmallsudoku{1&4&7&2&5&8&3&9&6}{8&2&5&9&3&6&4&1&7}{6&9&3&4&7&1&8&5&2}{2&5&8&1&6&9&7&3&4}{9&3&6&7&2&4&5&8&1}{4&7&1&5&8&3&2&6&9}{3&6&9&8&4&2&1&7&5}{7&1&4&3&9&5&6&2&8}{5&8&2&6&1&7&9&4&3}
&\settablename{DS(9,50)}\namedsmallsudoku{1&4&7&2&5&8&3&9&6}{8&2&5&9&3&6&4&1&7}{6&9&3&4&7&1&8&5&2}{2&5&8&1&6&9&7&3&4}{9&3&6&7&2&4&5&8&1}{4&7&1&5&8&3&2&6&9}{3&8&4&6&1&7&9&2&5}{5&1&9&8&4&2&6&7&3}{7&6&2&3&9&5&1&4&8}
&\settablename{DS(9,51)}\namedsmallsudoku{1&4&7&2&5&8&3&9&6}{8&2&5&9&3&6&4&1&7}{6&9&3&4&7&1&8&5&2}{2&5&8&1&6&9&7&3&4}{9&3&6&7&2&4&5&8&1}{4&7&1&5&8&3&2&6&9}{3&8&4&6&9&2&1&7&5}{5&1&9&3&4&7&6&2&8}{7&6&2&8&1&5&9&4&3}
&\settablename{DS(9,52)}\namedsmallsudoku{1&4&7&2&5&8&3&9&6}{8&2&5&9&3&6&4&1&7}{6&9&3&4&7&1&8&5&2}{2&5&8&1&6&9&7&4&3}{9&3&6&7&2&4&1&8&5}{4&7&1&5&8&3&6&2&9}{3&6&9&8&1&5&2&7&4}{7&1&4&6&9&2&5&3&8}{5&8&2&3&4&7&9&6&1}
\\
\\
\settablename{DS(9,53)}\namedsmallsudoku{1&4&7&2&5&8&3&9&6}{8&2&5&9&3&6&4&1&7}{6&9&3&4&7&1&8&5&2}{2&5&8&1&6&9&7&4&3}{9&3&6&7&2&4&1&8&5}{4&7&1&5&8&3&6&2&9}{3&6&9&8&4&2&5&7&1}{7&1&4&3&9&5&2&6&8}{5&8&2&6&1&7&9&3&4}
&\settablename{DS(9,54)}\namedsmallsudoku{1&4&7&2&5&8&3&9&6}{8&2&5&9&3&6&4&1&7}{6&9&3&4&7&1&8&5&2}{2&5&8&1&6&9&7&4&3}{9&3&6&7&2&4&1&8&5}{4&7&1&5&8&3&6&2&9}{3&8&4&6&9&2&5&7&1}{5&1&9&3&4&7&2&6&8}{7&6&2&8&1&5&9&3&4}
&\settablename{DS(9,55)}\namedsmallsudoku{1&4&7&2&5&8&3&9&6}{8&2&5&9&3&6&4&1&7}{6&9&3&4&7&1&8&5&2}{2&5&8&1&9&4&6&7&3}{9&3&6&5&2&7&1&4&8}{4&7&1&8&6&3&9&2&5}{3&6&9&7&1&5&2&8&4}{7&1&4&6&8&2&5&3&9}{5&8&2&3&4&9&7&6&1}
&\settablename{DS(9,56)}\namedsmallsudoku{1&4&7&2&5&8&3&9&6}{8&2&5&9&3&6&4&1&7}{6&9&3&4&7&1&8&5&2}{2&5&8&1&9&4&6&7&3}{9&3&6&5&2&7&1&4&8}{4&7&1&8&6&3&9&2&5}{3&6&9&7&4&2&5&8&1}{7&1&4&3&8&5&2&6&9}{5&8&2&6&1&9&7&3&4}
\\
\\
\settablename{DS(9,57)}\namedsmallsudoku{1&4&7&2&5&8&3&9&6}{8&2&5&9&3&6&4&1&7}{6&9&3&4&7&1&8&5&2}{2&5&8&1&9&4&6&7&3}{9&3&6&5&2&7&1&4&8}{4&7&1&8&6&3&9&2&5}{3&8&4&7&1&5&2&6&9}{5&1&9&6&8&2&7&3&4}{7&6&2&3&4&9&5&8&1}
&\settablename{DS(9,58)}\namedsmallsudoku{1&4&7&2&5&8&3&9&6}{8&2&5&9&3&6&4&1&7}{6&9&3&4&7&1&8&5&2}{2&5&8&1&9&4&7&6&3}{9&3&6&5&2&7&1&8&4}{4&7&1&8&6&3&5&2&9}{3&6&9&7&1&5&2&4&8}{7&1&4&6&8&2&9&3&5}{5&8&2&3&4&9&6&7&1}
&\settablename{DS(9,59)}\namedsmallsudoku{1&4&7&2&5&8&3&9&6}{8&2&5&9&3&6&4&1&7}{6&9&3&4&7&1&8&5&2}{2&5&8&1&9&4&7&6&3}{9&3&6&5&2&7&1&8&4}{4&7&1&8&6&3&5&2&9}{3&8&4&6&1&9&2&7&5}{5&1&9&7&4&2&6&3&8}{7&6&2&3&8&5&9&4&1}
&\settablename{DS(9,60)}\namedsmallsudoku{1&4&7&2&5&8&3&9&6}{8&2&5&9&3&6&4&1&7}{6&9&3&4&7&1&8&5&2}{2&5&8&3&6&9&1&7&4}{9&3&6&7&1&4&5&2&8}{4&7&1&5&8&2&9&6&3}{3&6&9&1&4&7&2&8&5}{7&1&4&8&2&5&6&3&9}{5&8&2&6&9&3&7&4&1}
\\
\\
\settablename{DS(9,61)}\namedsmallsudoku{1&4&7&2&5&8&3&9&6}{8&2&5&9&3&6&4&1&7}{6&9&3&4&7&1&8&5&2}{2&5&8&3&6&9&1&7&4}{9&3&6&7&1&4&5&2&8}{4&7&1&5&8&2&9&6&3}{3&6&9&8&2&5&7&4&1}{7&1&4&6&9&3&2&8&5}{5&8&2&1&4&7&6&3&9}
&\settablename{DS(9,62)}\namedsmallsudoku{1&4&7&2&5&8&3&9&6}{8&2&5&9&3&6&4&1&7}{6&9&3&4&7&1&8&5&2}{2&5&8&3&6&9&7&4&1}{9&3&6&7&1&4&2&8&5}{4&7&1&5&8&2&6&3&9}{3&6&9&1&4&7&5&2&8}{7&1&4&8&2&5&9&6&3}{5&8&2&6&9&3&1&7&4}
&\settablename{DS(9,63)}\namedsmallsudoku{1&4&7&2&5&8&3&9&6}{8&2&5&9&3&6&4&1&7}{6&9&3&4&7&1&8&5&2}{2&5&8&3&6&9&7&4&1}{9&3&6&7&1&4&2&8&5}{4&7&1&5&8&2&6&3&9}{3&6&9&8&2&5&1&7&4}{7&1&4&6&9&3&5&2&8}{5&8&2&1&4&7&9&6&3}
&\settablename{DS(9,64)}\namedsmallsudoku{1&4&7&2&5&8&3&9&6}{8&2&5&9&3&6&4&1&7}{6&9&3&4&7&1&8&5&2}{2&5&8&3&9&4&6&7&1}{9&3&6&5&1&7&2&4&8}{4&7&1&8&6&2&9&3&5}{3&6&9&1&8&5&7&2&4}{7&1&4&6&2&9&5&8&3}{5&8&2&7&4&3&1&6&9}
\\
\\
\settablename{DS(9,65)}\namedsmallsudoku{1&4&7&2&5&8&3&9&6}{8&2&5&9&3&6&4&1&7}{6&9&3&4&7&1&8&5&2}{2&5&8&3&9&4&6&7&1}{9&3&6&5&1&7&2&4&8}{4&7&1&8&6&2&9&3&5}{3&6&9&7&2&5&1&8&4}{7&1&4&6&8&3&5&2&9}{5&8&2&1&4&9&7&6&3}
&\settablename{DS(9,66)}\namedsmallsudoku{1&4&7&2&5&8&3&9&6}{8&2&5&9&3&6&4&1&7}{6&9&3&4&7&1&8&5&2}{2&5&8&3&9&4&6&7&1}{9&3&6&5&1&7&2&4&8}{4&7&1&8&6&2&9&3&5}{3&8&4&7&2&5&1&6&9}{5&1&9&6&8&3&7&2&4}{7&6&2&1&4&9&5&8&3}
&\settablename{DS(9,67)}\namedsmallsudoku{1&4&7&2&5&8&3&9&6}{8&2&5&9&3&6&4&1&7}{6&9&3&4&7&1&8&5&2}{2&5&8&3&9&4&7&6&1}{9&3&6&5&1&7&2&8&4}{4&7&1&8&6&2&5&3&9}{3&6&9&7&2&5&1&4&8}{7&1&4&6&8&3&9&2&5}{5&8&2&1&4&9&6&7&3}
&\settablename{DS(9,68)}\namedsmallsudoku{1&4&7&2&5&8&3&9&6}{8&2&5&9&3&6&4&1&7}{6&9&3&4&7&1&8&5&2}{2&5&8&3&9&4&7&6&1}{9&3&6&5&1&7&2&8&4}{4&7&1&8&6&2&5&3&9}{3&8&4&6&2&9&1&7&5}{5&1&9&7&4&3&6&2&8}{7&6&2&1&8&5&9&4&3}
\\
\\
\settablename{DS(9,69)}\namedsmallsudoku{1&4&7&2&5&8&3&9&6}{8&2&5&9&3&6&4&1&7}{6&9&3&4&7&1&8&5&2}{2&5&8&6&9&3&1&7&4}{9&3&6&1&4&7&5&2&8}{4&7&1&8&2&5&9&6&3}{3&6&9&7&1&4&2&8&5}{7&1&4&5&8&2&6&3&9}{5&8&2&3&6&9&7&4&1}
&\settablename{DS(9,70)}\namedsmallsudoku{1&4&7&2&5&8&3&9&6}{8&2&5&9&3&6&4&1&7}{6&9&3&4&7&1&8&5&2}{2&5&9&1&8&4&6&7&3}{7&3&6&5&2&9&1&4&8}{4&8&1&7&6&3&9&2&5}{3&7&4&6&9&2&5&8&1}{5&1&8&3&4&7&2&6&9}{9&6&2&8&1&5&7&3&4}
&\settablename{DS(9,71)}\namedsmallsudoku{1&4&7&2&5&8&3&9&6}{8&2&5&9&3&6&4&1&7}{6&9&3&4&7&1&8&5&2}{2&5&9&1&8&4&6&7&3}{7&3&6&5&2&9&1&4&8}{4&8&1&7&6&3&9&2&5}{3&7&4&8&1&5&2&6&9}{5&1&8&6&9&2&7&3&4}{9&6&2&3&4&7&5&8&1}
&\settablename{DS(9,72)}\namedsmallsudoku{1&4&7&2&5&8&3&9&6}{8&2&5&9&3&6&4&1&7}{6&9&3&4&7&1&8&5&2}{2&5&9&3&4&7&1&6&8}{7&3&6&8&1&5&9&2&4}{4&8&1&6&9&2&5&7&3}{3&6&8&5&2&9&7&4&1}{9&1&4&7&6&3&2&8&5}{5&7&2&1&8&4&6&3&9}
\end{tabular}
\end{center}

\pagebreak

\begin{center}
\begin{tabular}{P{28mm}P{28mm}P{28mm}P{28mm}}
\settablename{DS(9,73)}\namedsmallsudoku{1&4&7&2&5&8&3&9&6}{8&2&5&9&3&6&4&1&7}{6&9&3&4&7&1&8&5&2}{2&5&9&3&4&7&1&6&8}{7&3&6&8&1&5&9&2&4}{4&8&1&6&9&2&5&7&3}{3&7&4&1&6&9&2&8&5}{5&1&8&7&2&4&6&3&9}{9&6&2&5&8&3&7&4&1}
&\settablename{DS(9,74)}\namedsmallsudoku{1&4&7&2&5&8&3&9&6}{8&2&5&9&3&6&4&1&7}{6&9&3&4&7&1&8&5&2}{2&5&9&3&4&7&1&6&8}{7&3&6&8&1&5&9&2&4}{4&8&1&6&9&2&5&7&3}{3&7&4&5&2&9&6&8&1}{5&1&8&7&6&3&2&4&9}{9&6&2&1&8&4&7&3&5}
&\settablename{DS(9,75)}\namedsmallsudoku{1&4&7&2&5&8&3&9&6}{8&2&5&9&3&6&4&1&7}{6&9&3&4&7&1&8&5&2}{2&5&9&3&6&7&1&8&4}{7&3&6&8&1&4&5&2&9}{4&8&1&5&9&2&7&6&3}{3&6&8&1&4&9&2&7&5}{9&1&4&7&2&5&6&3&8}{5&7&2&6&8&3&9&4&1}
&\settablename{DS(9,76)}\namedsmallsudoku{1&4&7&2&5&8&3&9&6}{8&2&5&9&3&6&4&1&7}{6&9&3&4&7&1&8&5&2}{2&5&9&3&6&7&1&8&4}{7&3&6&8&1&4&5&2&9}{4&8&1&5&9&2&7&6&3}{3&6&8&7&2&5&9&4&1}{9&1&4&6&8&3&2&7&5}{5&7&2&1&4&9&6&3&8}
\\
\\
\settablename{DS(9,77)}\namedsmallsudoku{1&4&7&2&5&8&3&9&6}{8&2&5&9&3&6&4&1&7}{6&9&3&4&7&1&8&5&2}{2&5&9&3&8&4&6&7&1}{7&3&6&5&1&9&2&4&8}{4&8&1&7&6&2&9&3&5}{3&6&8&1&9&5&7&2&4}{9&1&4&6&2&7&5&8&3}{5&7&2&8&4&3&1&6&9}
&\settablename{DS(9,78)}\namedsmallsudoku{1&4&7&2&5&8&3&9&6}{8&2&5&9&3&6&4&1&7}{6&9&3&4&7&1&8&5&2}{2&5&9&3&8&4&7&6&1}{7&3&6&5&1&9&2&8&4}{4&8&1&7&6&2&5&3&9}{3&7&4&1&9&5&6&2&8}{5&1&8&6&2&7&9&4&3}{9&6&2&8&4&3&1&7&5}
&\settablename{DS(9,79)}\namedsmallsudoku{1&4&7&2&5&8&3&9&6}{8&2&5&9&3&6&4&1&7}{6&9&3&4&7&1&8&5&2}{2&5&9&6&8&3&1&7&4}{7&3&6&1&4&9&5&2&8}{4&8&1&7&2&5&9&6&3}{3&6&8&5&1&7&2&4&9}{9&1&4&8&6&2&7&3&5}{5&7&2&3&9&4&6&8&1}
&\settablename{DS(9,80)}\namedsmallsudoku{1&4&7&2&5&8&3&9&6}{8&2&5&9&3&6&4&1&7}{6&9&3&4&7&1&8&5&2}{2&5&9&6&8&3&1&7&4}{7&3&6&1&4&9&5&2&8}{4&8&1&7&2&5&9&6&3}{3&6&8&5&9&2&7&4&1}{9&1&4&3&6&7&2&8&5}{5&7&2&8&1&4&6&3&9}
\\
\\
\settablename{DS(9,81)}\namedsmallsudoku{1&4&7&2&5&8&3&9&6}{8&2&5&9&3&6&4&1&7}{6&9&3&4&7&1&8&5&2}{2&5&9&6&8&3&1&7&4}{7&3&6&1&4&9&5&2&8}{4&8&1&7&2&5&9&6&3}{3&7&4&5&9&2&6&8&1}{5&1&8&3&6&7&2&4&9}{9&6&2&8&1&4&7&3&5}
&\settablename{DS(9,82)}\namedsmallsudoku{1&4&7&2&5&8&3&9&6}{8&2&5&9&3&6&4&1&7}{6&9&3&4&7&1&8&5&2}{2&5&9&7&1&4&6&3&8}{7&3&6&5&8&2&9&4&1}{4&8&1&3&6&9&2&7&5}{3&6&8&1&9&5&7&2&4}{9&1&4&6&2&7&5&8&3}{5&7&2&8&4&3&1&6&9}
&\settablename{DS(9,83)}\namedsmallsudoku{1&4&7&2&5&8&3&9&6}{8&2&5&9&3&6&4&1&7}{6&9&3&4&7&1&8&5&2}{2&5&9&7&1&4&6&8&3}{7&3&6&5&8&2&1&4&9}{4&8&1&3&6&9&7&2&5}{3&6&8&1&9&5&2&7&4}{9&1&4&6&2&7&5&3&8}{5&7&2&8&4&3&9&6&1}
&\settablename{DS(9,84)}\namedsmallsudoku{1&4&7&2&5&8&3&9&6}{8&2&5&9&3&6&4&1&7}{6&9&3&4&7&1&8&5&2}{2&5&9&7&1&4&6&8&3}{7&3&6&5&8&2&1&4&9}{4&8&1&3&6&9&7&2&5}{3&7&4&1&9&5&2&6&8}{5&1&8&6&2&7&9&3&4}{9&6&2&8&4&3&5&7&1}
\\
\\
\settablename{DS(9,85)}\namedsmallsudoku{1&4&7&2&5&8&3&9&6}{8&2&5&9&3&6&4&1&7}{6&9&3&4&7&1&8&5&2}{2&5&9&7&4&3&1&6&8}{7&3&6&1&8&5&9&2&4}{4&8&1&6&2&9&5&7&3}{3&6&8&5&1&7&2&4&9}{9&1&4&8&6&2&7&3&5}{5&7&2&3&9&4&6&8&1}
&\settablename{DS(9,86)}\namedsmallsudoku{1&4&7&2&5&8&3&9&6}{8&2&5&9&3&6&4&1&7}{6&9&3&4&7&1&8&5&2}{2&5&9&7&4&3&1&6&8}{7&3&6&1&8&5&9&2&4}{4&8&1&6&2&9&5&7&3}{3&6&8&5&9&2&7&4&1}{9&1&4&3&6&7&2&8&5}{5&7&2&8&1&4&6&3&9}
&\settablename{DS(9,87)}\namedsmallsudoku{1&4&7&2&5&8&3&9&6}{8&2&5&9&3&6&4&1&7}{6&9&3&4&7&1&8&5&2}{2&5&9&7&4&3&1&6&8}{7&3&6&1&8&5&9&2&4}{4&8&1&6&2&9&5&7&3}{3&7&4&5&9&2&6&8&1}{5&1&8&3&6&7&2&4&9}{9&6&2&8&1&4&7&3&5}
&\settablename{DS(9,88)}\namedsmallsudoku{1&4&7&2&5&8&3&9&6}{8&2&5&9&3&6&4&1&7}{6&9&3&4&7&1&8&5&2}{2&5&9&7&4&3&6&8&1}{7&3&6&1&8&5&2&4&9}{4&8&1&6&2&9&7&3&5}{3&6&8&5&1&7&9&2&4}{9&1&4&8&6&2&5&7&3}{5&7&2&3&9&4&1&6&8}
\\
\\
\settablename{DS(9,89)}\namedsmallsudoku{1&4&7&2&5&8&3&9&6}{8&2&5&9&3&6&4&1&7}{6&9&3&4&7&1&8&5&2}{2&5&9&7&4&3&6&8&1}{7&3&6&1&8&5&2&4&9}{4&8&1&6&2&9&7&3&5}{3&6&8&5&9&2&1&7&4}{9&1&4&3&6&7&5&2&8}{5&7&2&8&1&4&9&6&3}
&\settablename{DS(9,90)}\namedsmallsudoku{1&4&7&2&5&8&3&9&6}{8&2&5&9&3&6&4&1&7}{6&9&3&4&7&1&8&5&2}{2&5&9&7&4&3&6&8&1}{7&3&6&1&8&5&2&4&9}{4&8&1&6&2&9&7&3&5}{3&7&4&5&9&2&1&6&8}{5&1&8&3&6&7&9&2&4}{9&6&2&8&1&4&5&7&3}
&\settablename{DS(9,91)}\namedsmallsudoku{1&4&7&2&5&8&3&9&6}{8&2&5&9&3&6&4&1&7}{6&9&3&4&7&1&8&5&2}{2&5&9&7&6&3&1&4&8}{7&3&6&1&8&4&9&2&5}{4&8&1&5&2&9&6&7&3}{3&7&4&8&1&5&2&6&9}{5&1&8&6&9&2&7&3&4}{9&6&2&3&4&7&5&8&1}
&\settablename{DS(9,92)}\namedsmallsudoku{1&4&7&2&5&8&3&9&6}{8&2&5&9&3&6&4&1&7}{6&9&3&4&7&1&8&5&2}{2&5&9&8&4&3&6&7&1}{7&3&6&1&9&5&2&4&8}{4&8&1&6&2&7&9&3&5}{3&6&8&5&1&9&7&2&4}{9&1&4&7&6&2&5&8&3}{5&7&2&3&8&4&1&6&9}
\\
\\
\settablename{DS(9,93)}\namedsmallsudoku{1&4&7&2&5&8&3&9&6}{8&2&5&9&3&6&4&1&7}{6&9&3&4&7&1&8&5&2}{2&5&9&8&4&3&6&7&1}{7&3&6&1&9&5&2&4&8}{4&8&1&6&2&7&9&3&5}{3&6&8&7&1&4&5&2&9}{9&1&4&5&8&2&7&6&3}{5&7&2&3&6&9&1&8&4}
&\settablename{DS(9,94)}\namedsmallsudoku{1&4&7&2&5&8&3&9&6}{8&2&5&9&3&6&4&1&7}{6&9&3&4&7&1&8&5&2}{2&5&9&8&4&3&6&7&1}{7&3&6&1&9&5&2&4&8}{4&8&1&6&2&7&9&3&5}{3&7&4&5&8&2&1&6&9}{5&1&8&3&6&9&7&2&4}{9&6&2&7&1&4&5&8&3}
&\settablename{DS(9,95)}\namedsmallsudoku{1&4&7&2&5&8&3&9&6}{8&2&5&9&3&6&4&1&7}{6&9&3&4&7&1&8&5&2}{2&5&9&8&4&3&7&6&1}{7&3&6&1&9&5&2&8&4}{4&8&1&6&2&7&5&3&9}{3&6&8&7&1&4&9&2&5}{9&1&4&5&8&2&6&7&3}{5&7&2&3&6&9&1&4&8}
&\settablename{DS(9,96)}\namedsmallsudoku{1&4&7&2&5&8&3&9&6}{8&2&5&9&3&6&4&1&7}{6&9&3&4&7&1&8&5&2}{2&5&9&8&4&3&7&6&1}{7&3&6&1&9&5&2&8&4}{4&8&1&6&2&7&5&3&9}{3&7&4&5&1&9&6&2&8}{5&1&8&7&6&2&9&4&3}{9&6&2&3&8&4&1&7&5}
\end{tabular}
\end{center}

\pagebreak

\begin{center}
\begin{tabular}{P{28mm}P{28mm}P{28mm}P{28mm}}
\settablename{DS(9,97)}\namedsmallsudoku{1&4&7&2&5&8&3&9&6}{8&2&5&9&3&6&4&1&7}{6&9&3&4&7&1&8&5&2}{2&5&9&8&6&3&1&7&4}{7&3&6&1&9&4&5&2&8}{4&8&1&5&2&7&9&6&3}{3&6&8&7&1&5&2&4&9}{9&1&4&6&8&2&7&3&5}{5&7&2&3&4&9&6&8&1}
&\settablename{DS(9,98)}\namedsmallsudoku{1&4&7&2&5&8&3&9&6}{8&2&5&9&3&6&4&1&7}{6&9&3&4&7&1&8&5&2}{2&5&9&8&6&3&1&7&4}{7&3&6&1&9&4&5&2&8}{4&8&1&5&2&7&9&6&3}{3&7&4&6&1&9&2&8&5}{5&1&8&7&4&2&6&3&9}{9&6&2&3&8&5&7&4&1}
&\settablename{DS(9,99)}\namedsmallsudoku{1&4&7&2&5&8&3&9&6}{8&2&5&9&3&6&4&1&7}{6&9&3&4&7&1&8&5&2}{2&5&9&8&6&3&7&4&1}{7&3&6&1&9&4&2&8&5}{4&8&1&5&2&7&6&3&9}{3&6&8&7&1&5&9&2&4}{9&1&4&6&8&2&5&7&3}{5&7&2&3&4&9&1&6&8}
&\settablename{DS(9,100)}\namedsmallsudoku{1&4&7&2&5&8&3&9&6}{8&2&5&9&3&6&4&1&7}{6&9&3&4&7&1&8&5&2}{2&5&9&8&6&3&7&4&1}{7&3&6&1&9&4&2&8&5}{4&8&1&5&2&7&6&3&9}{3&7&4&6&1&9&5&2&8}{5&1&8&7&4&2&9&6&3}{9&6&2&3&8&5&1&7&4}
\\
\\
\settablename{DS(9,101)}\namedsmallsudoku{1&4&7&2&5&8&3&9&6}{8&2&5&9&3&6&4&1&7}{6&9&3&4&7&1&8&5&2}{2&6&9&3&8&4&1&7&5}{7&3&4&5&1&9&6&2&8}{5&8&1&7&6&2&9&4&3}{3&7&6&1&9&5&2&8&4}{4&1&8&6&2&7&5&3&9}{9&5&2&8&4&3&7&6&1}
&\settablename{DS(9,102)}\namedsmallsudoku{1&4&7&2&5&8&3&9&6}{8&2&5&9&3&6&4&1&7}{6&9&3&4&7&1&8&5&2}{2&6&9&3&8&5&1&7&4}{7&3&4&6&1&9&5&2&8}{5&8&1&7&4&2&9&6&3}{3&7&6&1&9&4&2&8&5}{4&1&8&5&2&7&6&3&9}{9&5&2&8&6&3&7&4&1}
&\settablename{DS(9,103)}\namedsmallsudoku{1&4&7&2&5&8&3&9&6}{8&2&5&9&3&6&4&1&7}{6&9&3&4&7&1&8&5&2}{2&6&9&3&8&5&7&4&1}{7&3&4&6&1&9&2&8&5}{5&8&1&7&4&2&6&3&9}{3&7&6&1&9&4&5&2&8}{4&1&8&5&2&7&9&6&3}{9&5&2&8&6&3&1&7&4}
&\settablename{DS(9,104)}\namedsmallsudoku{1&4&7&2&5&8&3&9&6}{8&2&5&9&3&6&4&1&7}{6&9&3&4&7&1&8&5&2}{2&6&9&7&1&4&5&8&3}{7&3&4&5&8&2&1&6&9}{5&8&1&3&6&9&7&2&4}{3&7&6&1&9&5&2&4&8}{4&1&8&6&2&7&9&3&5}{9&5&2&8&4&3&6&7&1}
\\
\\
\settablename{DS(9,105)}\namedsmallsudoku{1&4&7&2&5&8&3&9&6}{8&2&5&9&3&6&4&1&7}{6&9&3&4&7&1&8&5&2}{2&6&9&7&4&3&5&8&1}{7&3&4&1&8&5&2&6&9}{5&8&1&6&2&9&7&3&4}{3&7&6&8&1&4&9&2&5}{4&1&8&5&9&2&6&7&3}{9&5&2&3&6&7&1&4&8}
&\settablename{DS(9,106)}\namedsmallsudoku{1&4&7&2&5&8&3&9&6}{8&2&5&9&3&6&4&1&7}{6&9&3&4&7&1&8&5&2}{2&6&9&8&1&4&5&7&3}{7&3&4&5&9&2&1&6&8}{5&8&1&3&6&7&9&2&4}{3&7&6&1&8&5&2&4&9}{4&1&8&6&2&9&7&3&5}{9&5&2&7&4&3&6&8&1}
&\settablename{DS(9,107)}\namedsmallsudoku{1&4&7&2&5&8&3&9&6}{8&2&5&9&3&6&4&1&7}{6&9&3&4&7&1&8&5&2}{2&6&9&8&4&3&1&7&5}{7&3&4&1&9&5&6&2&8}{5&8&1&6&2&7&9&4&3}{3&7&6&5&1&9&2&8&4}{4&1&8&7&6&2&5&3&9}{9&5&2&3&8&4&7&6&1}
&\settablename{DS(9,108)}\namedsmallsudoku{1&4&7&2&5&8&3&9&6}{8&2&5&9&3&6&4&1&7}{6&9&3&4&7&1&8&5&2}{2&6&9&8&4&3&5&7&1}{7&3&4&1&9&5&2&6&8}{5&8&1&6&2&7&9&3&4}{3&7&6&5&8&2&1&4&9}{4&1&8&3&6&9&7&2&5}{9&5&2&7&1&4&6&8&3}
\\
\\
\settablename{DS(9,109)}\namedsmallsudoku{1&4&7&2&5&8&3&9&6}{8&2&5&9&3&6&4&1&7}{6&9&3&4&7&1&8&5&2}{2&7&4&1&8&5&6&3&9}{5&3&8&6&2&9&7&4&1}{9&6&1&7&4&3&2&8&5}{3&8&6&5&1&7&9&2&4}{4&1&9&8&6&2&5&7&3}{7&5&2&3&9&4&1&6&8}
&\settablename{DS(9,110)}\namedsmallsudoku{1&4&7&2&5&8&3&9&6}{8&2&5&9&3&6&4&1&7}{6&9&3&4&7&1&8&5&2}{2&7&4&1&9&5&6&3&8}{5&3&8&6&2&7&9&4&1}{9&6&1&8&4&3&2&7&5}{3&8&6&5&1&9&7&2&4}{4&1&9&7&6&2&5&8&3}{7&5&2&3&8&4&1&6&9}
&\settablename{DS(9,111)}\namedsmallsudoku{1&4&7&2&5&8&3&9&6}{8&2&5&9&3&6&4&1&7}{6&9&3&4&7&1&8&5&2}{2&7&4&1&9&5&6&3&8}{5&3&8&6&2&7&9&4&1}{9&6&1&8&4&3&2&7&5}{3&8&6&7&1&4&5&2&9}{4&1&9&5&8&2&7&6&3}{7&5&2&3&6&9&1&8&4}
&\settablename{DS(9,112)}\namedsmallsudoku{1&4&7&2&5&8&3&9&6}{8&2&5&9&3&6&4&1&7}{6&9&3&4&7&1&8&5&2}{2&7&4&1&9&5&6&8&3}{5&3&8&6&2&7&1&4&9}{9&6&1&8&4&3&7&2&5}{3&8&6&5&1&9&2&7&4}{4&1&9&7&6&2&5&3&8}{7&5&2&3&8&4&9&6&1}
\\
\\
\settablename{DS(9,113)}\namedsmallsudoku{1&4&7&2&5&8&3&9&6}{8&2&5&9&3&6&4&1&7}{6&9&3&4&7&1&8&5&2}{2&7&4&3&6&9&1&8&5}{5&3&8&7&1&4&6&2&9}{9&6&1&5&8&2&7&4&3}{3&8&6&1&9&5&2&7&4}{4&1&9&6&2&7&5&3&8}{7&5&2&8&4&3&9&6&1}
&\settablename{DS(9,114)}\namedsmallsudoku{1&4&7&2&5&8&3&9&6}{8&2&5&9&3&6&4&1&7}{6&9&3&4&7&1&8&5&2}{2&7&4&3&9&5&1&6&8}{5&3&8&6&1&7&9&2&4}{9&6&1&8&4&2&5&7&3}{3&8&6&5&2&9&7&4&1}{4&1&9&7&6&3&2&8&5}{7&5&2&1&8&4&6&3&9}
&\settablename{DS(9,115)}\namedsmallsudoku{1&4&7&2&5&8&3&9&6}{8&2&5&9&3&6&4&1&7}{6&9&3&4&7&1&8&5&2}{2&7&4&5&1&9&6&3&8}{5&3&8&7&6&2&9&4&1}{9&6&1&3&8&4&2&7&5}{3&8&6&1&9&5&7&2&4}{4&1&9&6&2&7&5&8&3}{7&5&2&8&4&3&1&6&9}
&\settablename{DS(9,116)}\namedsmallsudoku{1&4&7&2&5&8&3&9&6}{8&2&5&9&3&6&4&1&7}{6&9&3&4&7&1&8&5&2}{2&7&4&5&1&9&6&8&3}{5&3&8&7&6&2&1&4&9}{9&6&1&3&8&4&7&2&5}{3&8&6&1&9&5&2&7&4}{4&1&9&6&2&7&5&3&8}{7&5&2&8&4&3&9&6&1}
\\
\\
\settablename{DS(9,117)}\namedsmallsudoku{1&4&7&2&5&9&3&6&8}{8&2&5&7&3&6&9&1&4}{6&9&3&4&8&1&5&7&2}{2&5&8&3&6&7&1&4&9}{9&3&6&8&1&4&7&2&5}{4&7&1&5&9&2&6&8&3}{3&6&9&1&4&8&2&5&7}{7&1&4&9&2&5&8&3&6}{5&8&2&6&7&3&4&9&1}
&\settablename{DS(9,118)}\namedsmallsudoku{1&4&7&2&5&9&3&6&8}{8&2&5&7&3&6&9&1&4}{6&9&3&4&8&1&5&7&2}{2&5&8&3&6&7&1&4&9}{9&3&6&8&1&4&7&2&5}{4&7&1&5&9&2&6&8&3}{3&8&4&1&7&5&2&9&6}{5&1&9&6&2&8&4&3&7}{7&6&2&9&4&3&8&5&1}
&\settablename{DS(9,119)}\namedsmallsudoku{1&4&7&2&5&9&3&6&8}{8&2&5&7&3&6&9&1&4}{6&9&3&4&8&1&5&7&2}{2&5&8&3&6&7&4&9&1}{9&3&6&8&1&4&2&5&7}{4&7&1&5&9&2&8&3&6}{3&6&9&1&4&8&7&2&5}{7&1&4&9&2&5&6&8&3}{5&8&2&6&7&3&1&4&9}
&\settablename{DS(9,120)}\namedsmallsudoku{1&4&7&2&5&9&3&6&8}{8&2&5&7&3&6&9&1&4}{6&9&3&4&8&1&5&7&2}{2&5&8&3&6&7&4&9&1}{9&3&6&8&1&4&2&5&7}{4&7&1&5&9&2&8&3&6}{3&8&4&1&7&5&6&2&9}{5&1&9&6&2&8&7&4&3}{7&6&2&9&4&3&1&8&5}
\end{tabular}
\end{center}

\pagebreak

\begin{center}
\begin{tabular}{P{28mm}P{28mm}P{28mm}P{28mm}}
\settablename{DS(9,121)}\namedsmallsudoku{1&4&7&2&5&9&3&6&8}{8&2&5&7&3&6&9&1&4}{6&9&3&4&8&1&5&7&2}{2&5&9&1&6&7&4&8&3}{7&3&6&8&2&4&1&5&9}{4&8&1&5&9&3&7&2&6}{3&6&8&9&1&5&2&4&7}{9&1&4&6&7&2&8&3&5}{5&7&2&3&4&8&6&9&1}
&\settablename{DS(9,122)}\namedsmallsudoku{1&4&7&2&5&9&3&6&8}{8&2&5&7&3&6&9&1&4}{6&9&3&4&8&1&5&7&2}{2&5&9&1&6&7&4&8&3}{7&3&6&8&2&4&1&5&9}{4&8&1&5&9&3&7&2&6}{3&7&4&6&1&8&2&9&5}{5&1&8&9&4&2&6&3&7}{9&6&2&3&7&5&8&4&1}
&\settablename{DS(9,123)}\namedsmallsudoku{1&4&7&2&5&9&3&6&8}{8&2&5&7&3&6&9&1&4}{6&9&3&4&8&1&5&7&2}{2&5&9&1&6&7&8&4&3}{7&3&6&8&2&4&1&9&5}{4&8&1&5&9&3&6&2&7}{3&6&8&9&4&2&7&5&1}{9&1&4&3&7&5&2&8&6}{5&7&2&6&1&8&4&3&9}
&\settablename{DS(9,124)}\namedsmallsudoku{1&4&7&2&5&9&3&6&8}{8&2&5&7&3&6&9&1&4}{6&9&3&4&8&1&5&7&2}{2&5&9&1&6&7&8&4&3}{7&3&6&8&2&4&1&9&5}{4&8&1&5&9&3&6&2&7}{3&7&4&6&1&8&2&5&9}{5&1&8&9&4&2&7&3&6}{9&6&2&3&7&5&4&8&1}
\\
\\
\settablename{DS(9,125)}\namedsmallsudoku{1&4&7&2&5&9&3&6&8}{8&2&5&7&3&6&9&1&4}{6&9&3&4&8&1&5&7&2}{2&5&9&1&6&7&8&4&3}{7&3&6&8&2&4&1&9&5}{4&8&1&5&9&3&6&2&7}{3&7&4&9&1&5&2&8&6}{5&1&8&6&7&2&4&3&9}{9&6&2&3&4&8&7&5&1}
&\settablename{DS(9,126)}\namedsmallsudoku{1&4&7&2&5&9&3&6&8}{8&2&5&7&3&6&9&1&4}{6&9&3&4&8&1&5&7&2}{2&5&9&3&4&7&1&8&6}{7&3&6&8&1&5&4&2&9}{4&8&1&6&9&2&7&5&3}{3&6&8&1&7&4&2&9&5}{9&1&4&5&2&8&6&3&7}{5&7&2&9&6&3&8&4&1}
&\settablename{DS(9,127)}\namedsmallsudoku{1&4&7&2&5&9&3&6&8}{8&2&5&7&3&6&9&1&4}{6&9&3&4&8&1&5&7&2}{2&5&9&3&4&7&1&8&6}{7&3&6&8&1&5&4&2&9}{4&8&1&6&9&2&7&5&3}{3&7&4&1&6&8&2&9&5}{5&1&8&9&2&4&6&3&7}{9&6&2&5&7&3&8&4&1}
&\settablename{DS(9,128)}\namedsmallsudoku{1&4&7&2&5&9&3&6&8}{8&2&5&7&3&6&9&1&4}{6&9&3&4&8&1&5&7&2}{2&5&9&3&4&7&1&8&6}{7&3&6&8&1&5&4&2&9}{4&8&1&6&9&2&7&5&3}{3&7&4&5&2&8&6&9&1}{5&1&8&9&6&3&2&4&7}{9&6&2&1&7&4&8&3&5}
\\
\\
\settablename{DS(9,129)}\namedsmallsudoku{1&4&7&2&5&9&3&6&8}{8&2&5&7&3&6&9&1&4}{6&9&3&4&8&1&5&7&2}{2&5&9&3&4&7&6&8&1}{7&3&6&8&1&5&2&4&9}{4&8&1&6&9&2&7&3&5}{3&6&8&9&2&4&1&5&7}{9&1&4&5&7&3&8&2&6}{5&7&2&1&6&8&4&9&3}
&\settablename{DS(9,130)}\namedsmallsudoku{1&4&7&2&5&9&3&6&8}{8&2&5&7&3&6&9&1&4}{6&9&3&4&8&1&5&7&2}{2&5&9&3&4&7&6&8&1}{7&3&6&8&1&5&2&4&9}{4&8&1&6&9&2&7&3&5}{3&7&4&5&2&8&1&9&6}{5&1&8&9&6&3&4&2&7}{9&6&2&1&7&4&8&5&3}
&\settablename{DS(9,131)}\namedsmallsudoku{1&4&7&2&5&9&3&6&8}{8&2&5&7&3&6&9&1&4}{6&9&3&4&8&1&5&7&2}{2&5&9&3&6&8&1&4&7}{7&3&6&9&1&4&8&2&5}{4&8&1&5&7&2&6&9&3}{3&6&8&1&4&7&2&5&9}{9&1&4&8&2&5&7&3&6}{5&7&2&6&9&3&4&8&1}
&\settablename{DS(9,132)}\namedsmallsudoku{1&4&7&2&5&9&3&6&8}{8&2&5&7&3&6&9&1&4}{6&9&3&4&8&1&5&7&2}{2&5&9&3&6&8&1&4&7}{7&3&6&9&1&4&8&2&5}{4&8&1&5&7&2&6&9&3}{3&7&4&1&9&5&2&8&6}{5&1&8&6&2&7&4&3&9}{9&6&2&8&4&3&7&5&1}
\\
\\
\settablename{DS(9,133)}\namedsmallsudoku{1&4&7&2&5&9&3&6&8}{8&2&5&7&3&6&9&1&4}{6&9&3&4&8&1&5&7&2}{2&5&9&3&6&8&7&4&1}{7&3&6&9&1&4&2&8&5}{4&8&1&5&7&2&6&3&9}{3&6&8&1&9&5&4&2&7}{9&1&4&6&2&7&8&5&3}{5&7&2&8&4&3&1&9&6}
&\settablename{DS(9,134)}\namedsmallsudoku{1&4&7&2&5&9&3&6&8}{8&2&5&7&3&6&9&1&4}{6&9&3&4&8&1&5&7&2}{2&5&9&3&6&8&7&4&1}{7&3&6&9&1&4&2&8&5}{4&8&1&5&7&2&6&3&9}{3&7&4&1&9&5&8&2&6}{5&1&8&6&2&7&4&9&3}{9&6&2&8&4&3&1&5&7}
&\settablename{DS(9,135)}\namedsmallsudoku{1&4&7&2&5&9&3&6&8}{8&2&5&7&3&6&9&1&4}{6&9&3&4&8&1&5&7&2}{2&5&9&3&6&8&7&4&1}{7&3&6&9&1&4&2&8&5}{4&8&1&5&7&2&6&3&9}{3&7&4&8&2&5&1&9&6}{5&1&8&6&9&3&4&2&7}{9&6&2&1&4&7&8&5&3}
&\settablename{DS(9,136)}\namedsmallsudoku{1&4&7&2&5&9&3&6&8}{8&2&5&7&3&6&9&1&4}{6&9&3&4&8&1&5&7&2}{2&5&9&3&7&4&1&8&6}{7&3&6&5&1&8&4&2&9}{4&8&1&9&6&2&7&5&3}{3&7&4&8&2&5&6&9&1}{5&1&8&6&9&3&2&4&7}{9&6&2&1&4&7&8&3&5}
\\
\\
\settablename{DS(9,137)}\namedsmallsudoku{1&4&7&2&5&9&3&6&8}{8&2&5&7&3&6&9&1&4}{6&9&3&4&8&1&5&7&2}{2&5&9&3&7&4&6&8&1}{7&3&6&5&1&8&2&4&9}{4&8&1&9&6&2&7&3&5}{3&6&8&1&9&5&4&2&7}{9&1&4&6&2&7&8&5&3}{5&7&2&8&4&3&1&9&6}
&\settablename{DS(9,138)}\namedsmallsudoku{1&4&7&2&5&9&3&6&8}{8&2&5&7&3&6&9&1&4}{6&9&3&4&8&1&5&7&2}{2&5&9&3&7&4&6&8&1}{7&3&6&5&1&8&2&4&9}{4&8&1&9&6&2&7&3&5}{3&7&4&1&9&5&8&2&6}{5&1&8&6&2&7&4&9&3}{9&6&2&8&4&3&1&5&7}
&\settablename{DS(9,139)}\namedsmallsudoku{1&4&7&2&5&9&3&6&8}{8&2&5&7&3&6&9&1&4}{6&9&3&4&8&1&5&7&2}{2&5&9&6&1&7&4&8&3}{7&3&6&8&4&2&1&5&9}{4&8&1&3&9&5&7&2&6}{3&6&8&1&7&4&2&9&5}{9&1&4&5&2&8&6&3&7}{5&7&2&9&6&3&8&4&1}
&\settablename{DS(9,140)}\namedsmallsudoku{1&4&7&2&5&9&3&6&8}{8&2&5&7&3&6&9&1&4}{6&9&3&4&8&1&5&7&2}{2&5&9&6&1&7&4&8&3}{7&3&6&8&4&2&1&5&9}{4&8&1&3&9&5&7&2&6}{3&7&4&1&6&8&2&9&5}{5&1&8&9&2&4&6&3&7}{9&6&2&5&7&3&8&4&1}
\\
\\
\settablename{DS(9,141)}\namedsmallsudoku{1&4&7&2&5&9&3&6&8}{8&2&5&7&3&6&9&1&4}{6&9&3&4&8&1&5&7&2}{2&5&9&6&1&7&4&8&3}{7&3&6&8&4&2&1&5&9}{4&8&1&3&9&5&7&2&6}{3&7&4&5&2&8&6&9&1}{5&1&8&9&6&3&2&4&7}{9&6&2&1&7&4&8&3&5}
&\settablename{DS(9,142)}\namedsmallsudoku{1&4&7&2&5&9&3&6&8}{8&2&5&7&3&6&9&1&4}{6&9&3&4&8&1&5&7&2}{2&5&9&8&6&3&7&4&1}{7&3&6&1&9&4&2&8&5}{4&8&1&5&2&7&6&3&9}{3&7&4&9&1&5&8&2&6}{5&1&8&6&7&2&4&9&3}{9&6&2&3&4&8&1&5&7}
&\settablename{DS(9,143)}\namedsmallsudoku{1&4&7&2&5&9&3&6&8}{8&2&5&7&3&6&9&1&4}{6&9&3&4&8&1&5&7&2}{2&6&8&1&9&4&7&3&5}{9&3&4&5&2&7&6&8&1}{5&7&1&8&6&3&2&4&9}{3&5&9&6&1&8&4&2&7}{7&1&6&9&4&2&8&5&3}{4&8&2&3&7&5&1&9&6}
&\settablename{DS(9,144)}\namedsmallsudoku{1&4&7&2&5&9&3&6&8}{8&2&5&7&3&6&9&1&4}{6&9&3&4&8&1&5&7&2}{2&6&8&1&9&4&7&3&5}{9&3&4&5&2&7&6&8&1}{5&7&1&8&6&3&2&4&9}{3&8&6&9&1&5&4&2&7}{4&1&9&6&7&2&8&5&3}{7&5&2&3&4&8&1&9&6}
\end{tabular}
\end{center}

\pagebreak

\begin{center}
\begin{tabular}{P{28mm}P{28mm}P{28mm}P{28mm}}
\settablename{DS(9,145)}\namedsmallsudoku{1&4&7&2&5&9&3&6&8}{8&2&5&7&3&6&9&1&4}{6&9&3&4&8&1&5&7&2}{2&6&8&1&9&4&7&3&5}{9&3&4&5&2&7&6&8&1}{5&7&1&8&6&3&2&4&9}{3&8&6&9&4&2&1&5&7}{4&1&9&3&7&5&8&2&6}{7&5&2&6&1&8&4&9&3}
&\settablename{DS(9,146)}\namedsmallsudoku{1&4&7&2&5&9&3&6&8}{8&2&5&7&3&6&9&1&4}{6&9&3&4&8&1&5&7&2}{2&6&8&1&9&4&7&5&3}{9&3&4&5&2&7&1&8&6}{5&7&1&8&6&3&4&2&9}{3&5&9&6&1&8&2&4&7}{7&1&6&9&4&2&8&3&5}{4&8&2&3&7&5&6&9&1}
&\settablename{DS(9,147)}\namedsmallsudoku{1&4&7&2&5&9&3&6&8}{8&2&5&7&3&6&9&1&4}{6&9&3&4&8&1&5&7&2}{2&6&8&1&9&4&7&5&3}{9&3&4&5&2&7&1&8&6}{5&7&1&8&6&3&4&2&9}{3&5&9&6&7&2&8&4&1}{7&1&6&3&4&8&2&9&5}{4&8&2&9&1&5&6&3&7}
&\settablename{DS(9,148)}\namedsmallsudoku{1&4&7&2&5&9&3&6&8}{8&2&5&7&3&6&9&1&4}{6&9&3&4&8&1&5&7&2}{2&6&8&3&4&7&1&5&9}{9&3&4&8&1&5&7&2&6}{5&7&1&6&9&2&4&8&3}{3&5&9&1&6&8&2&4&7}{7&1&6&9&2&4&8&3&5}{4&8&2&5&7&3&6&9&1}
\\
\\
\settablename{DS(9,149)}\namedsmallsudoku{1&4&7&2&5&9&3&6&8}{8&2&5&7&3&6&9&1&4}{6&9&3&4&8&1&5&7&2}{2&6&8&3&4&7&1&5&9}{9&3&4&8&1&5&7&2&6}{5&7&1&6&9&2&4&8&3}{3&8&6&1&7&4&2&9&5}{4&1&9&5&2&8&6&3&7}{7&5&2&9&6&3&8&4&1}
&\settablename{DS(9,150)}\namedsmallsudoku{1&4&7&2&5&9&3&6&8}{8&2&5&7&3&6&9&1&4}{6&9&3&4&8&1&5&7&2}{2&6&8&3&4&7&1&9&5}{9&3&4&8&1&5&6&2&7}{5&7&1&6&9&2&8&4&3}{3&5&9&1&7&4&2&8&6}{7&1&6&5&2&8&4&3&9}{4&8&2&9&6&3&7&5&1}
&\settablename{DS(9,151)}\namedsmallsudoku{1&4&7&2&5&9&3&6&8}{8&2&5&7&3&6&9&1&4}{6&9&3&4&8&1&5&7&2}{2&6&8&3&4&7&1&9&5}{9&3&4&8&1&5&6&2&7}{5&7&1&6&9&2&8&4&3}{3&8&6&9&2&4&7&5&1}{4&1&9&5&7&3&2&8&6}{7&5&2&1&6&8&4&3&9}
&\settablename{DS(9,152)}\namedsmallsudoku{1&4&7&2&5&9&3&6&8}{8&2&5&7&3&6&9&1&4}{6&9&3&4&8&1&5&7&2}{2&6&8&3&9&5&7&4&1}{9&3&4&6&1&7&2&8&5}{5&7&1&8&4&2&6&3&9}{3&5&9&1&6&8&4&2&7}{7&1&6&9&2&4&8&5&3}{4&8&2&5&7&3&1&9&6}
\\
\\
\settablename{DS(9,153)}\namedsmallsudoku{1&4&7&2&5&9&3&6&8}{8&2&5&7&3&6&9&1&4}{6&9&3&4&8&1&5&7&2}{2&6&8&3&9&5&7&4&1}{9&3&4&6&1&7&2&8&5}{5&7&1&8&4&2&6&3&9}{3&8&6&9&2&4&1&5&7}{4&1&9&5&7&3&8&2&6}{7&5&2&1&6&8&4&9&3}
&\settablename{DS(9,154)}\namedsmallsudoku{1&4&7&2&5&9&3&6&8}{8&2&5&7&3&6&9&1&4}{6&9&3&4&8&1&5&7&2}{2&6&8&5&9&3&7&4&1}{9&3&4&1&6&7&2&8&5}{5&7&1&8&2&4&6&3&9}{3&5&9&6&1&8&4&2&7}{7&1&6&9&4&2&8&5&3}{4&8&2&3&7&5&1&9&6}
&\settablename{DS(9,155)}\namedsmallsudoku{1&4&7&2&5&9&3&6&8}{8&2&5&7&3&6&9&1&4}{6&9&3&4&8&1&5&7&2}{2&7&4&3&9&5&6&8&1}{5&3&8&6&1&7&2&4&9}{9&6&1&8&4&2&7&3&5}{3&5&9&1&6&8&4&2&7}{7&1&6&9&2&4&8&5&3}{4&8&2&5&7&3&1&9&6}
&\settablename{DS(9,156)}\namedsmallsudoku{1&4&7&2&5&9&3&8&6}{8&2&5&7&3&6&4&1&9}{6&9&3&4&8&1&7&5&2}{2&5&8&3&6&7&1&9&4}{9&3&6&8&1&4&5&2&7}{4&7&1&5&9&2&8&6&3}{3&6&9&1&4&8&2&7&5}{7&1&4&9&2&5&6&3&8}{5&8&2&6&7&3&9&4&1}
\\
\\
\settablename{DS(9,157)}\namedsmallsudoku{1&4&7&2&5&9&3&8&6}{8&2&5&7&3&6&4&1&9}{6&9&3&4&8&1&7&5&2}{2&5&8&3&6&7&1&9&4}{9&3&6&8&1&4&5&2&7}{4&7&1&5&9&2&8&6&3}{3&6&9&1&7&5&2&4&8}{7&1&4&6&2&8&9&3&5}{5&8&2&9&4&3&6&7&1}
&\settablename{DS(9,158)}\namedsmallsudoku{1&4&7&2&5&9&3&8&6}{8&2&5&7&3&6&4&1&9}{6&9&3&4&8&1&7&5&2}{2&5&8&3&6&7&9&4&1}{9&3&6&8&1&4&2&7&5}{4&7&1&5&9&2&6&3&8}{3&6&9&1&4&8&5&2&7}{7&1&4&9&2&5&8&6&3}{5&8&2&6&7&3&1&9&4}
&\settablename{DS(9,159)}\namedsmallsudoku{1&4&7&2&5&9&3&8&6}{8&2&5&7&3&6&4&1&9}{6&9&3&4&8&1&7&5&2}{2&5&8&3&6&7&9&4&1}{9&3&6&8&1&4&2&7&5}{4&7&1&5&9&2&6&3&8}{3&6&9&1&7&5&8&2&4}{7&1&4&6&2&8&5&9&3}{5&8&2&9&4&3&1&6&7}
&\settablename{DS(9,160)}\namedsmallsudoku{1&4&7&2&5&9&3&8&6}{8&2&5&7&3&6&4&1&9}{6&9&3&4&8&1&7&5&2}{2&5&8&3&9&4&1&6&7}{9&3&6&5&1&7&8&2&4}{4&7&1&8&6&2&5&9&3}{3&8&4&9&2&5&6&7&1}{5&1&9&6&7&3&2&4&8}{7&6&2&1&4&8&9&3&5}
\\
\\
\settablename{DS(9,161)}\namedsmallsudoku{1&4&7&2&5&9&3&8&6}{8&2&5&7&3&6&4&1&9}{6&9&3&4&8&1&7&5&2}{2&5&8&3&9&4&6&7&1}{9&3&6&5&1&7&2&4&8}{4&7&1&8&6&2&9&3&5}{3&6&9&1&7&5&8&2&4}{7&1&4&6&2&8&5&9&3}{5&8&2&9&4&3&1&6&7}
&\settablename{DS(9,162)}\namedsmallsudoku{1&4&7&2&5&9&3&8&6}{8&2&5&7&3&6&4&1&9}{6&9&3&4&8&1&7&5&2}{2&5&8&3&9&4&6&7&1}{9&3&6&5&1&7&2&4&8}{4&7&1&8&6&2&9&3&5}{3&8&4&9&2&5&1&6&7}{5&1&9&6&7&3&8&2&4}{7&6&2&1&4&8&5&9&3}
&\settablename{DS(9,163)}\namedsmallsudoku{1&4&7&2&5&9&3&8&6}{8&2&5&7&3&6&4&1&9}{6&9&3&4&8&1&7&5&2}{2&5&8&6&7&3&1&9&4}{9&3&6&1&4&8&5&2&7}{4&7&1&9&2&5&8&6&3}{3&6&9&8&1&4&2&7&5}{7&1&4&5&9&2&6&3&8}{5&8&2&3&6&7&9&4&1}
&\settablename{DS(9,164)}\namedsmallsudoku{1&4&7&2&5&9&3&8&6}{8&2&5&7&3&6&4&1&9}{6&9&3&4&8&1&7&5&2}{2&5&8&6&7&3&1&9&4}{9&3&6&1&4&8&5&2&7}{4&7&1&9&2&5&8&6&3}{3&8&4&5&9&2&6&7&1}{5&1&9&3&6&7&2&4&8}{7&6&2&8&1&4&9&3&5}
\\
\\
\settablename{DS(9,165)}\namedsmallsudoku{1&4&7&2&5&9&3&8&6}{8&2&5&7&3&6&4&1&9}{6&9&3&4&8&1&7&5&2}{2&5&8&9&4&3&6&7&1}{9&3&6&1&7&5&2&4&8}{4&7&1&6&2&8&9&3&5}{3&8&4&5&9&2&1&6&7}{5&1&9&3&6&7&8&2&4}{7&6&2&8&1&4&5&9&3}
&\settablename{DS(9,166)}\namedsmallsudoku{1&4&7&2&5&9&3&8&6}{8&2&5&7&3&6&4&1&9}{6&9&3&4&8&1&7&5&2}{2&5&9&1&6&7&8&4&3}{7&3&6&8&2&4&1&9&5}{4&8&1&5&9&3&6&2&7}{3&6&8&9&1&5&2&7&4}{9&1&4&6&7&2&5&3&8}{5&7&2&3&4&8&9&6&1}
&\settablename{DS(9,167)}\namedsmallsudoku{1&4&7&2&5&9&3&8&6}{8&2&5&7&3&6&4&1&9}{6&9&3&4&8&1&7&5&2}{2&5&9&1&6&7&8&4&3}{7&3&6&8&2&4&1&9&5}{4&8&1&5&9&3&6&2&7}{3&6&8&9&4&2&5&7&1}{9&1&4&3&7&5&2&6&8}{5&7&2&6&1&8&9&3&4}
&\settablename{DS(9,168)}\namedsmallsudoku{1&4&7&2&5&9&3&8&6}{8&2&5&7&3&6&4&1&9}{6&9&3&4&8&1&7&5&2}{2&5&9&3&4&7&8&6&1}{7&3&6&8&1&5&2&9&4}{4&8&1&6&9&2&5&3&7}{3&6&8&1&7&4&9&2&5}{9&1&4&5&2&8&6&7&3}{5&7&2&9&6&3&1&4&8}
\end{tabular}
\end{center}

\pagebreak

\begin{center}
\begin{tabular}{P{28mm}P{28mm}P{28mm}P{28mm}}
\settablename{DS(9,169)}\namedsmallsudoku{1&4&7&2&5&9&3&8&6}{8&2&5&7&3&6&4&1&9}{6&9&3&4&8&1&7&5&2}{2&5&9&3&4&7&8&6&1}{7&3&6&8&1&5&2&9&4}{4&8&1&6&9&2&5&3&7}{3&6&8&9&2&4&1&7&5}{9&1&4&5&7&3&6&2&8}{5&7&2&1&6&8&9&4&3}
&\settablename{DS(9,170)}\namedsmallsudoku{1&4&7&2&5&9&3&8&6}{8&2&5&7&3&6&4&1&9}{6&9&3&4&8&1&7&5&2}{2&5&9&3&6&8&1&7&4}{7&3&6&9&1&4&5&2&8}{4&8&1&5&7&2&9&6&3}{3&6&8&1&4&7&2&9&5}{9&1&4&8&2&5&6&3&7}{5&7&2&6&9&3&8&4&1}
&\settablename{DS(9,171)}\namedsmallsudoku{1&4&7&2&5&9&3&8&6}{8&2&5&7&3&6&4&1&9}{6&9&3&4&8&1&7&5&2}{2&5&9&3&6&8&1&7&4}{7&3&6&9&1&4&5&2&8}{4&8&1&5&7&2&9&6&3}{3&6&8&1&9&5&2&4&7}{9&1&4&6&2&7&8&3&5}{5&7&2&8&4&3&6&9&1}
&\settablename{DS(9,172)}\namedsmallsudoku{1&4&7&2&5&9&3&8&6}{8&2&5&7&3&6&4&1&9}{6&9&3&4&8&1&7&5&2}{2&5&9&3&7&4&1&6&8}{7&3&6&5&1&8&9&2&4}{4&8&1&9&6&2&5&7&3}{3&6&8&1&9&5&2&4&7}{9&1&4&6&2&7&8&3&5}{5&7&2&8&4&3&6&9&1}
\\
\\
\settablename{DS(9,173)}\namedsmallsudoku{1&4&7&2&5&9&3&8&6}{8&2&5&7&3&6&4&1&9}{6&9&3&4&8&1&7&5&2}{2&5&9&3&7&4&1&6&8}{7&3&6&5&1&8&9&2&4}{4&8&1&9&6&2&5&7&3}{3&7&4&8&2&5&6&9&1}{5&1&8&6&9&3&2&4&7}{9&6&2&1&4&7&8&3&5}
&\settablename{DS(9,174)}\namedsmallsudoku{1&4&7&2&5&9&3&8&6}{8&2&5&7&3&6&4&1&9}{6&9&3&4&8&1&7&5&2}{2&5&9&6&1&7&8&3&4}{7&3&6&8&4&2&5&9&1}{4&8&1&3&9&5&2&6&7}{3&6&8&9&2&4&1&7&5}{9&1&4&5&7&3&6&2&8}{5&7&2&1&6&8&9&4&3}
&\settablename{DS(9,175)}\namedsmallsudoku{1&4&7&2&5&9&3&8&6}{8&2&5&7&3&6&4&1&9}{6&9&3&4&8&1&7&5&2}{2&5&9&6&1&7&8&3&4}{7&3&6&8&4&2&5&9&1}{4&8&1&3&9&5&2&6&7}{3&7&4&1&6&8&9&2&5}{5&1&8&9&2&4&6&7&3}{9&6&2&5&7&3&1&4&8}
&\settablename{DS(9,176)}\namedsmallsudoku{1&4&7&2&5&9&3&8&6}{8&2&5&7&3&6&4&1&9}{6&9&3&4&8&1&7&5&2}{2&5&9&8&6&3&1&4&7}{7&3&6&1&9&4&8&2&5}{4&8&1&5&2&7&6&9&3}{3&6&8&9&1&5&2&7&4}{9&1&4&6&7&2&5&3&8}{5&7&2&3&4&8&9&6&1}
\\
\\
\settablename{DS(9,177)}\namedsmallsudoku{1&4&7&2&5&9&3&8&6}{8&2&5&7&3&6&4&1&9}{6&9&3&4&8&1&7&5&2}{2&5&9&8&6&3&1&4&7}{7&3&6&1&9&4&8&2&5}{4&8&1&5&2&7&6&9&3}{3&7&4&9&1&5&2&6&8}{5&1&8&6&7&2&9&3&4}{9&6&2&3&4&8&5&7&1}
&\settablename{DS(9,178)}\namedsmallsudoku{1&4&7&2&5&9&3&8&6}{8&2&5&7&3&6&4&1&9}{6&9&3&4&8&1&7&5&2}{2&6&8&1&9&4&5&7&3}{9&3&4&5&2&7&1&6&8}{5&7&1&8&6&3&9&2&4}{3&5&9&6&1&8&2&4&7}{7&1&6&9&4&2&8&3&5}{4&8&2&3&7&5&6&9&1}
&\settablename{DS(9,179)}\namedsmallsudoku{1&4&7&2&5&9&3&8&6}{8&2&5&7&3&6&4&1&9}{6&9&3&4&8&1&7&5&2}{2&6&8&1&9&4&5&7&3}{9&3&4&5&2&7&1&6&8}{5&7&1&8&6&3&9&2&4}{3&8&6&9&1&5&2&4&7}{4&1&9&6&7&2&8&3&5}{7&5&2&3&4&8&6&9&1}
&\settablename{DS(9,180)}\namedsmallsudoku{1&4&7&2&5&9&3&8&6}{8&2&5&7&3&6&4&1&9}{6&9&3&4&8&1&7&5&2}{2&6&8&3&4&7&1&9&5}{9&3&4&8&1&5&6&2&7}{5&7&1&6&9&2&8&4&3}{3&5&9&1&6&8&2&7&4}{7&1&6&9&2&4&5&3&8}{4&8&2&5&7&3&9&6&1}
\\
\\
\settablename{DS(9,181)}\namedsmallsudoku{1&4&7&2&5&9&3&8&6}{8&2&5&7&3&6&4&1&9}{6&9&3&4&8&1&7&5&2}{2&6&8&3&4&7&1&9&5}{9&3&4&8&1&5&6&2&7}{5&7&1&6&9&2&8&4&3}{3&8&6&9&2&4&5&7&1}{4&1&9&5&7&3&2&6&8}{7&5&2&1&6&8&9&3&4}
&\settablename{DS(9,182)}\namedsmallsudoku{1&4&7&2&5&9&3&8&6}{8&2&5&7&3&6&4&1&9}{6&9&3&4&8&1&7&5&2}{2&6&8&3&7&4&5&9&1}{9&3&4&5&1&8&2&6&7}{5&7&1&9&6&2&8&3&4}{3&5&9&6&2&7&1&4&8}{7&1&6&8&4&3&9&2&5}{4&8&2&1&9&5&6&7&3}
&\settablename{DS(9,183)}\namedsmallsudoku{1&4&7&2&5&9&3&8&6}{8&2&5&7&3&6&4&1&9}{6&9&3&4&8&1&7&5&2}{2&6&9&1&7&4&8&3&5}{7&3&4&5&2&8&6&9&1}{5&8&1&9&6&3&2&4&7}{3&7&6&8&1&5&9&2&4}{4&1&8&6&9&2&5&7&3}{9&5&2&3&4&7&1&6&8}
&\settablename{DS(9,184)}\namedsmallsudoku{1&4&7&2&5&9&3&8&6}{8&2&5&7&3&6&4&1&9}{6&9&3&4&8&1&7&5&2}{2&6&9&3&7&5&1&4&8}{7&3&4&6&1&8&9&2&5}{5&8&1&9&4&2&6&7&3}{3&7&6&8&2&4&5&9&1}{4&1&8&5&9&3&2&6&7}{9&5&2&1&6&7&8&3&4}
\\
\\
&\settablename{DS(9,185)}\namedsmallsudoku{1&4&7&2&5&9&3&8&6}{8&2&5&7&3&6&4&1&9}{6&9&3&4&8&1&7&5&2}{2&6&9&8&4&3&5&7&1}{7&3&4&1&9&5&2&6&8}{5&8&1&6&2&7&9&3&4}{3&7&6&9&1&4&8&2&5}{4&1&8&5&7&2&6&9&3}{9&5&2&3&6&8&1&4&7}
&\settablename{DS(9,186)}\namedsmallsudoku{1&4&7&2&5&9&3&8&6}{8&2&5&7&3&6&4&1&9}{6&9&3&4&8&1&7&5&2}{2&7&6&8&1&4&5&9&3}{4&3&8&5&9&2&1&6&7}{9&5&1&3&6&7&8&2&4}{3&6&9&1&7&5&2&4&8}{7&1&4&6&2&8&9&3&5}{5&8&2&9&4&3&6&7&1}
&
\end{tabular}
\end{center}

\bigskip\bigskip


\begin{thebibliography}{99}

\bibitem{3Doku}
3Doku, \verb+https://www.tellini.org/products/mac/3doku+

\bibitem{BaileyEtAl}
R.A. Bailey, Peter J. Cameron and Robert Connelly,
\emph{Sudoku, gerechte designs, resolutions, affine space, spreads, reguli, and Hamming codes},
Amer. Math. Monthly \textbf{115} (2008), no. \textbf{5}, 383--404.

\bibitem{Belousov}
V.D. Belousov,
\emph{The group associated with a quasigroup} (Russian), Mat. Issled. \textbf{4}/\textbf{3} (1969), 21--39.

\bibitem{Bruck}
Richard Hubert Bruck,
A survey of binary systems, \emph{Ergebnisse der Mathematik und ihrer Grenzgebiete}, Neue Folge, Heft \textbf{20}, Springer Verlag, Berlin-G\"ottingen-Heidelberg 1958.

\bibitem{CameronEtAl}
P.J. Cameron, A.J.W. Hilton and E.R. Vaughan,
\emph{An analogue of Ryser's theorem for partial Sudoku squares},
J. Combin. Math. Combin. Comput. \textbf{80} (2012), 47--69.

\bibitem{DL}
Ale\v{s} Dr\'apal and Petr Lison\v{e}k,
\emph{Extreme nonassociativity via nearfields},
accepted to Finite Fields and Applications.

\bibitem{DVJCD}
Ale\v{s} Dr\'apal and Viliam Valent,
\emph{High nonassociativity in order $8$ and an associative index estimate},
J. Combin. Des. \textbf{27} (2019), 205--228.

\bibitem{DV}
Ale\v{s} Dr\'apal and Viliam Valent,
\emph{Extreme nonassociativity in order nine and beyond},
J. Combin. Des., accepted.

\bibitem{Handbook}
Charles J. Colbourn and Jeffrey H. Dinitz (eds.),
Handbook of combinatorial designs, Second edition, \emph{Discrete Mathematics and its Applications} (Boca Raton), Chapman and Hall/CRC, Boca Raton, FL, 2007.

\bibitem{GAP}
The GAP Group, GAP -- Groups, Algorithms, and Programming, Version 4.6.3; 2013. \verb+http://www.gap-system.org+.

\bibitem{Keedwell}
A.D. Keedwell,
\emph{Constructions of complete sets of orthogonal diagonal Sudoku squares}, Australas. J. Combin. \textbf{47} (2010), 227--238.

\bibitem{Kepka}
Tom\'a\v{s} Kepka,
\emph{A note on associative triples of elements in cancellation groupoids},
Comment. Math. Univ. Carolin. \textbf{21} (1980), 479--487.

\bibitem{LambertWhitlock}
Tiffany A. Lambert and Paula A. Whitlock,
\emph{Generalizing Sudoku to three dimensions},
Monte Carlo Methods Appl. \textbf{16} (2010), no. \textbf{3}--\textbf{4}, 251--263.

\bibitem{Lorch}
John Lorch,
\emph{Magic squares and sudoku},
Amer. Math. Monthly \textbf{119} (2012), no. \textbf{9}, 759--770.

\bibitem{Mace4}
William McCune, \texttt{Mace4}, \verb+https://www.cs.unm.edu/~mccune/mace4+

\bibitem{McGuireEtAl}
Gary McGuire, Bastian Tugemann and Gilles Civario,
\emph{There is no 16-clue Sudoku: solving the Sudoku minimum number of clues problem via hitting set enumeration},
Exp. Math. \textbf{23} (2014), no. \textbf{2}, 190--217.

\bibitem{LOOPS}
G\'abor P. Nagy and Petr Vojt\v{e}chovsk\'y, \emph{LOOPS: Computing with quasigroups and loops in GAP}, available at \verb+http://www.math.du.edu/~petr/loops+

\bibitem{PedersenVis}
Ryan M. Pedersen and Timothy L. Vis,
\emph{Sets of mutually orthogonal Sudoku Latin squares}, College Math. J. \textbf{40} (2009), no. \textbf{3}, 174--180.

\bibitem{RussellJarvis}
E.~Russell and F.~Jarvis, 
\verb+http://www.afjarvis.staff.shef.ac.uk/sudoku/sudgroup.html+

\bibitem{StanovskyVojtechovsky}
David Stanovsk\'y and Petr Vojt\v{e}chovsk\'y,
\emph{Commutator theory for loops}, J. Algebra \textbf{399} (2014), 290--322.

\bibitem{Stein}
Sherman K. Stein,
\emph{Homogeneous quasigroups},
Pacific J. Math. \textbf{14} (1964), 1091--1102.

\bibitem{SuDokusMaths}
Su-Doku's maths, \verb+http://forum.enjoysudoku.com/su-doku-s-maths-t44.html+

\bibitem{Zassenhaus}
Hans Zassenhaus,
\emph{\"Uber endliche Fastk\"orper} (German), Abh. Math. Sem. Univ. Hamburg \textbf{11} (1935), no. \textbf{1}, 187--220.

\end{thebibliography}
\end{document}